\theoremstyle{plain} 
\newtheorem{thm}{Theorem}[section]
\newtheorem*{thm*}{Theorem}
\newtheorem{cor}[thm]{Corollary}
\newtheorem{lem}[thm]{Lemma}
\newtheorem{prop}[thm]{Proposition}
\newtheorem{conj}[thm]{Conjecture}
\newtheorem*{conj*}{Conjecture}
\newtheorem{dfn}[thm]{Definition}
\theoremstyle{definition}
\newtheorem{eg}[thm]{Example}
\newtheorem{rem}[thm]{Remark}	
\theoremstyle{remark}
\newtheorem*{pf}{Proof}
\numberwithin{equation}{section}
\def\NN{{\mathbb N}}
\def\ZZ{{\mathbb Z}}
\def\QQ{{\mathbb Q}}
\def\RR{{\mathbb R}}
\def\CC{{\mathbb C}}
\def\PP{{\mathbb P}}
\def\LL{{\mathbb L}}
\def\PP{{\mathbb P}}
\def\A{{\mathcal A}}
\def\B{{\mathcal B}}
\def\D{{\mathcal D}}
\def\E{{\mathcal E}}
\def\F{{\mathcal F}}
\def\N{{\mathcal N}}
\def\P{{\mathcal P}}
\def\T{{\mathcal T}}
\newcommand{\Aut}{{\rm Aut}}
\newcommand{\Pic}{{\rm Pic}}
\newcommand{\Hom}{{\rm Hom}}
\newcommand{\RHom}{{\rm RHom}}
\newcommand{\dHom}{{\rm hom}}
\newcommand{\iso}{\xrightarrow{\sim}}
\newcommand{\ch}{\mathrm{ch}}
\begin{document}
\title[Spherical twists, relations and the center]{Spherical twists, relations and the center of autoequivalence groups of K3 surfaces}
\author{Federico Barbacovi}
\address{Department of Mathematics, University College London, 25 Gordon Street, London, WC1H 0AY, UK}
\email{federico.barbacovi.18@ucl.ac.uk}

\author{Kohei Kikuta}
\address{Department of Mathematics, Graduate School of Science, Osaka University, Toyonaka Osaka, 560-0043, Japan}
\email{kikuta@math.sci.osaka-u.ac.jp}

\begin{abstract}
Homological mirror symmetry predicts that there is a relation between autoequivalence groups of derived categories of coherent sheaves on Calabi--Yau varieties, and the symplectic mapping class groups of symplectic manifolds. 
In this paper, as an analogue of Dehn twists for
%mapping class groups of 
closed oriented real surfaces, we study spherical twists for dg-enhanced triangulated categories. 

%Based on the analogy between Dehn twists for closed real surfaces and spherical twists, 
We introduce the intersection number and relate it to group-theoretic properties of spherical twists. 
We show an inequality analogous to a fundamental inequality in the theory of mapping class groups about the behavior of the intersection number via iterations of Dehn twists.
%We show an inequality that mirrors a property about the intersection numbers of Lagrangian spheres in the Fukaya category which was proved by Keating. 
We also classify the subgroups generated by two spherical twists using the intersection number. 
In passing, we prove a structure theorem for finite dimensional dg-modules over the graded dual numbers and use this to describe the autoequivalence group. 

As an application, we compute the center of autoequivalence groups of derived categories of K3 surfaces. 
%As an application, we compute the center of full autoequivalence and Calabi--Yau autoequivalence groups of derived categories of K3 surfaces. 
\end{abstract}
\maketitle
%\markboth{KOHEI KIKUTA}{TITLE}
\tableofcontents
%%%%%%%%%%%%%%%%%%%%%%%%%%%%%%%%%%%%%%%%%%%%%%%%%%%%%%%%%%
\section{Introduction}
Let $X$ be a smooth projective variety over a field $K$ and $\D^b(X)$ the bounded derived categories of coherent sheaves on $X$. 
The autoequivalence group $\Aut(\D^b(X))$ consisting of exact self-equivalences of $\D^b(X)$ is an interesting object in group theory. 
There are some attempts to compute $\Aut(\D^b(X))$, but
this problem is rather difficult in general. 
%This group is hard to compute in general. 
%The problem of computing the full group $\Aut(\D^b(X))$ is usually rather difficult. 
The aim of this paper is to study group structures of $\Aut(\D^b(X))$ by focusing on spherical twists. 
The details are explained in the following two subsections. 

%---------------------------------------------------------
\subsection{Spherical twists and intersection number}
Homological mirror symmetry predicts that there is a relation between autoequivalence groups of derived categories of coherent sheaves on Calabi--Yau varieties, and the symplectic mapping class groups of symplectic manifolds. 
As an analogue of Dehn twists along Lagrangian spheres, Seidel--Thomas introduced ($d$-)spherical objects and an autoequivalence $T_E\in\Aut(\D^b(X))$ called the spherical twist along a spherical object $E\in\D^b(X)$ (\cite{ST}). 
In the following, we consider a more general triangulated category $\D$ with a dg-enhancement, see Section \ref{section-preliminaries-dg} for detailed settings. 

To clarify the analogy, we denote the sum of dimensions of all extension groups by $i(M,N)$ \emph{i.e.} for $M,N\in\D$,
\[
i(M,N):=\sum_{p\in\ZZ}\dim_K\Hom_\D(M,N[p]), 
\]
%To clarify the analogy between the mapping class group and the autoequivalence group. 
%introduce the intersection number 
%called the intersection number in this paper. 
which we call the intersection number of $M$ and $N$ in this paper. 
Referring to some well-known facts about Dehn twists for real surfaces (see Section \ref{section-MCG}), we prove the following three theorems. 
The intersection number is essential to understand the statements and their proofs. 

The first result is the inequality describing the behavior of the intersection number via iterations of spherical twists. 

\begin{thm}[Theorem \ref{fund-ineq-k-weak} and cf. Theorem \ref{fund-ineq-Dehn}]\label{intro-fund-ineq-k-weak}
Let $E\in\D$ be a spherical object and $M,N\in\D$ objects.  
For any $k\in\ZZ\backslash\{0\}$, we have
\[%\label{fund-ineq-k-weak-intro}
i(E,M)i(E,N)\le
i(T^k_EM,N)+i(M,N).
\]
\end{thm}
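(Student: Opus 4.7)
The plan is to handle the base cases $k = \pm 1$ directly from the defining triangles, and to reduce general $k$ to these via a Postnikov analysis of iterated spherical twists. For $k = 1$, I will apply $\Hom_\D(-, N[p])$ to the defining exact triangle
\[
\RHom(E, M) \otimes_K E \longrightarrow M \longrightarrow T_E M
\]
and extract the long exact sequence of $\Ext$ groups; summing dimensions and using the subadditivity of the total $\Hom$-dimension on exact triangles will yield $i(\RHom(E, M) \otimes_K E, N) \le i(M, N) + i(T_E M, N)$. The left-hand side evaluates to $i(E, M) \cdot i(E, N)$ because $\RHom(E, M)$ is a bounded complex of $K$-vector spaces of total dimension $i(E, M)$. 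For $k = -1$ I will use the symmetric dual triangle $\RHom(E, M)[d-1] \otimes_K E \to T_E^{-1} M \to M$, where $d$ denotes the sphericalness degree of $E$.

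For general $k$ with $|k| \ge 2$, my first step is to record the family of local inequalities obtained by applying the $k = 1$ case to each pair $(T_E^j M, N)$: using $i(E, T_E^j M) = i(E, M)$, which holds because $T_E$ is an autoequivalence, one obtains $c \le i(T_E^j M, N) + i(T_E^{j+1} M, N)$ for every $j \in \ZZ$, where $c := i(E, M) \cdot i(E, N)$. To promote these local inequalities into the desired global inequality $c \le i(T_E^k M, N) + i(M, N)$, I will then study the cone $C_k$ of the natural morphism $M \to T_E^k M$. Iterating the octahedral axiom exhibits $C_k$ as a Postnikov tower with layers $V_j \otimes E[1]$ for $j = 0, 1, \dots, k-1$, where $V_j := \RHom(E, T_E^j M)$ has total $K$-dimension $i(E, M)$ for every $j$. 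From the exact triangle $M \to T_E^k M \to C_k$, subadditivity gives the upper bound $i(C_k, N) \le i(M, N) + i(T_E^k M, N)$, so it will suffice to establish the matching lower bound $i(C_k, N) \ge c$.

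The hardest part will be this lower bound $i(C_k, N) \ge c$: the local LES inequalities alone are not sufficient, since one can exhibit numerical sequences $(a_j)$ satisfying $a_j + a_{j+1} \ge c$ for all $j$ but violating $a_0 + a_k \ge c$. To rule this out in the actual categorical situation, I will exploit the rigid structure of the Postnikov tower, in particular the shift identifications $V_j \cong V_0[j(1-d)]$ coming from $\RHom(E, E) \cong K \oplus K[-d]$ and the concrete form of the connecting maps between successive layers. In favourable cases (for instance when $d \ge 2$, so that $\Hom_\D(E, E[1]) = 0$ forces the Postnikov to split) one even obtains the strengthened inequality $|k| \cdot c \le i(T_E^k M, N) + i(M, N)$; in the general case the connecting maps need to be analyzed directly, in order to guarantee that at least one copy of the bottom layer $V_0 \otimes E[1]$ survives the cancellations. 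The case $k < 0$ is handled symmetrically using the defining triangle of $T_E^{-1}$.
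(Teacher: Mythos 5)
Your reduction is sound as far as it goes and in fact mirrors the paper's strategy: both arguments pass to the fibre/cone of the natural map relating $M$ and $T_E^kM$, exhibit it as a $|k|$-layer tower whose graded pieces are shifts of $\RHom(E,M)\otimes E$, use subadditivity on the resulting triangle to bound $i(C_k,N)$ above by $i(M,N)+i(T_E^kM,N)$, and reduce everything to the lower bound $i(C_k,N)\ge i(E,M)\,i(E,N)$. You also correctly identify that this lower bound is the entire difficulty and that chaining the $k=\pm1$ inequalities cannot yield it.

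However, the proposal stops exactly where the real work begins, and the one concrete structural claim you offer is false. The connecting maps of the tower are not controlled by $\Hom_\D(E,E[1])$: since consecutive layers differ by the shift $[1-d]$, they are governed by $\Hom_\D(E,E[d])\simeq K$, i.e.\ by the canonical extension $\epsilon$, and the tower essentially never splits. Indeed your ``strengthened inequality'' $|k|\cdot i(E,M)i(E,N)\le i(T_E^kM,N)+i(M,N)$ fails already for $M=N=E$ with $d\ge 2$ and $|k|\ge 2$: the left-hand side is $4|k|$ while the right-hand side is $2+2=4$. So the assertion that ``at least one copy of the bottom layer survives the cancellations'' is precisely the statement that must be proved, and nothing in your outline proves it. The paper's resolution is to make the $\epsilon$-action strict and compute: it constructs an h-projective model of $E$ carrying an honest square-zero degree-$d$ endomorphism $\alpha_\epsilon$ (Proposition \ref{good-rep-sph}), identifies $\RHom_{\A}(M,C_{|k|}(E,N))$ with a truncated tensor product $(\RHom_\A(E,N)\otimes_{A_d}\RHom_\A(M,E))_{|k|}$ over the graded dual numbers $A_d=K[\epsilon]/\epsilon^2$ (Proposition \ref{good-rep-hom}), classifies all finite-dimensional $A_d$-dg-modules via Koszul duality with $K[q]$ (Proposition \ref{class-fin-dim-mod}), and then verifies the lower bound case by case by exhibiting explicit non-exact cocycles. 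Without a substitute for this machinery --- that is, an actual computation of the cohomology of your Postnikov tower with its nontrivial differentials --- the argument is incomplete.
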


In the mapping class groups of real surfaces, 
a mapping class commutes with a Dehn twist along a simple closed curve
if and only if 
it preserves the curve up to isotopy. 
In autoequivalence groups, this basic fact corresponds to the following. 
\begin{thm}[Theorem \ref{commute-sph} and cf. Theorem \ref{Dehn-commute-sph}]
\label{commute-sph-intro}
Let $E_1,E_2\in\D$ be spherical objects, $\Phi\in\Aut(\D)$ an autoequivalence and
$k_1,k_2\in\ZZ\backslash\{0\}$. 
Then the following are equivalent: 
\begin{enumerate}
\item
$\Phi\circ T_{E_1}^{k_1}=T_{E_2}^{k_2}\circ\Phi$. 
\item
$\Phi(E_1)=E_2[l]$ for some $l\in\ZZ$, and $k_1=k_2$. 
\end{enumerate}
\end{thm}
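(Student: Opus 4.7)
My plan is to handle the two implications separately. The direction (ii) $\Rightarrow$ (i) is the easy one: it follows from the general conjugation formula $\Phi\circ T_E\circ\Phi^{-1}=T_{\Phi(E)}$, valid for any autoequivalence $\Phi$ and spherical object $E$, together with the shift-invariance $T_{E[l]}=T_E$ of the spherical twist. From $\Phi(E_1)=E_2[l]$ we get $\Phi\circ T_{E_1}\circ\Phi^{-1}=T_{E_2[l]}=T_{E_2}$, hence $\Phi\circ T_{E_1}^{k_1}=T_{E_2}^{k_1}\circ\Phi$, and (i) follows because $k_1=k_2$.

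For (i) $\Rightarrow$ (ii), I would first conjugate by $\Phi^{-1}$ and reduce to showing: if $T_F^{k_1}=T_{E_2}^{k_2}$ with $k_1,k_2\neq 0$ and $F:=\Phi(E_1),E_2$ spherical, then $F\cong E_2[l]$ and $k_1=k_2$. Evaluating both sides at $E_2$ yields the rigidifying relation
\[
T_F^{k_1}(E_2)\cong T_{E_2}^{k_2}(E_2)=E_2[k_2(1-d_2)],
\]
where $d_2$ is the sphericity dimension of $E_2$: although $T_F^{k_1}(E_2)$ is a priori a highly nontrivial iterated cone built from copies of $F$, it is forced to collapse to a bare shift of $E_2$.

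Applying Theorem \ref{intro-fund-ineq-k-weak} with $E=F$, $M=N=E_2$, $k=k_1$ gives
$i(F,E_2)^2\le i(T_F^{k_1}E_2,E_2)+i(E_2,E_2)=4$, so $i(F,E_2)\le 2$; the symmetric application with $E=E_2$, $M=N=F$, $k=k_2$ yields $i(E_2,F)\le 2$. Since $F\cong E_2[l]$ gives $i(F,E_2)=2$, the heart of the argument—and the step I expect to be the main obstacle—is to pin down $i(F,E_2)=2$ by excluding the values $0$ and $1$. The case $i(F,E_2)=0$ makes $T_F$ act as the identity on $E_2$, so $E_2\cong E_2[k_2(1-d_2)]$ and hence $d_2=1$; one should then derive a contradiction by producing a test object on which $T_F$ and $T_{E_2}$ act differently, exploiting that orthogonal twists cannot coincide. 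The case $i(F,E_2)=1$ will call for a finer inspection of the triangle defining $T_F(E_2)$ and the incompatibility of its iterated cones with being isomorphic to a shift of $E_2$.

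Once $F\cong E_2[l]$ is established, shift-invariance gives $T_F=T_{E_2}$, so the hypothesis reduces to $T_{E_2}^{k_1-k_2}=\mathrm{id}$. Since spherical twists have infinite order in $\Aut(\D)$—for $d_2\neq 1$ this is immediate from $T_{E_2}^n(E_2)=E_2[n(1-d_2)]$, while the case $d_2=1$ should be handled by structural results established earlier in the paper—this forces $k_1=k_2$, completing (ii).
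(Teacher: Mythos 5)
Your direction (ii) $\Rightarrow$ (i) and the reduction to $\Phi=\mathrm{id}$ match the paper. But the plan for (i) $\Rightarrow$ (ii) has a genuine gap at its core: even if you succeed in pinning down $i(F,E_2)=2$, this does \emph{not} give $F\simeq E_2[l]$. The implication ``$F\simeq E_2[l]\Rightarrow i(F,E_2)=2$'' does not reverse — there exist genuinely distinct spherical objects with intersection number $2$ (e.g.\ structure sheaves of two $(-2)$-curves meeting in two points), and this is exactly the regime where $\langle T_F,T_{E_2}\rangle$ is free (Theorem \ref{free-sph-twist}). Your outline never addresses how to exclude ``$F$ and $E_2$ distinct with $i(F,E_2)=2$,'' which is the hardest case. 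In addition, your exclusion of $i(F,E_2)=1$ is deferred to ``a finer inspection''; in the paper this case genuinely requires an external construction (an object $S$ with $i(E_1,S)=1$ and $i(E_2,S)=0$, from Kim's work), not just an analysis of the defining triangle. (The $i=0$ case is fine, since the standing assumption is $d>1$, so $E_2\simeq E_2[k_2(1-d)]$ is already a contradiction.)

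The paper's route is structurally different and avoids your dead end: it proves the contrapositive, namely that for \emph{distinct} $E_1,E_2$ one has $T_{E_1}^{k_1}\neq T_{E_2}^{k_2}$, by producing a separating test object $S$ with $i(E_1,S)>i(E_2,S)$ (Lemma \ref{distinct-distinguish}; the case $i(E_1,E_2)=2$ of that lemma needs an explicit cone construction) and then using the inequality of Theorem \ref{fund-ineq-k-weak-thm} as a \emph{lower} bound,
\[
i(T_{E_1}^{k_1}E_2,S)\;\ge\; i(E_1,E_2)\,i(E_1,S)-i(E_2,S)\;>\;i(E_2,S)\;=\;i(T_{E_2}^{k_2}E_2,S),
\]
to see that the two twists act differently on $E_2$. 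Your idea of testing the identity $T_F^{k_1}=T_{E_2}^{k_2}$ only against $M=N=E_2$ extracts too little information; the essential new ingredient you are missing is a third object that distinguishes $E_1$ from $E_2$. The final step ($k_1=k_2$ from infinite order of twists, using $d>1$) is fine.
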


As a corollary, we show a one-to-one correspondence between spherical objects and spherical twists (Corollary \ref{one-to-one-conjugate} (i)). 

%distinct simple closed curves with intersection number greater than one
It is well-known that 
two Dehn twists whose intersection number is greater than one generate the (non-abelian) free group of rank 2. 
We prove the corresponding result for autoequivalence groups. 
\begin{thm}[Theorem \ref{free-sph-twist} and cf. Theorem \ref{free-Dehn-twist}]\label{intro-free-sph-twist}
Let $E_1,E_2\in\D$ be spherical objects satisfying $E_1\not\simeq E_2[l]$ for any $l\in\ZZ$. 
If $i(E_1,E_2)\ge2$, then $\langle T_{E_1}^{k_1},T_{E_2}^{k_2} \rangle$ is isomorphic to the free group $F_2$ of rank $2$ for any $k_1,k_2\in\ZZ\backslash\{0\}$. 
\end{thm}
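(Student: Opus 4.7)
The strategy is to apply the Klein ping-pong lemma for free products to the action of $\Aut(\D)$ on the set of shift-equivalence classes of objects of $\D$. Set $a := i(E_1,E_2) \geq 2$ and, for $M \in \D$, write $f_i(M) := i(E_i,M)$; both functions are shift-invariant and, in the Calabi--Yau setting of interest, symmetric in their arguments. Define the disjoint candidate ping-pong sets
\[
A_1 := \{[M] : f_2(M) > f_1(M)\}, \qquad A_2 := \{[M] : f_1(M) > f_2(M)\}.
\]

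The heart of the proof is to verify that for every $k \in \ZZ \setminus \{0\}$,
\[
T_{E_1}^k(A_2) \subseteq A_1 \quad \text{and} \quad T_{E_2}^k(A_1) \subseteq A_2.
\]
For the first inclusion, $T_{E_1}^k$ fixes $E_1$ up to shift, so $f_1(T_{E_1}^k M) = f_1(M)$, while Theorem~\ref{intro-fund-ineq-k-weak} with $(E,M,N) = (E_1,M,E_2)$ gives $f_2(T_{E_1}^k M) \geq a\, f_1(M) - f_2(M)$. For $M \in A_2$, the integer inequality $f_1(M) \geq f_2(M)+1$ together with $a \geq 2$ yields
\[
f_2(T_{E_1}^k M) \;\geq\; 2 f_1(M) - f_2(M) \;>\; f_1(M) \;=\; f_1(T_{E_1}^k M),
\]
so $T_{E_1}^k M \in A_1$; the other inclusion is symmetric. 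Combined with the infinite order of each spherical twist (established earlier in the paper), Klein's lemma then identifies
\[
\langle T_{E_1}^{k_1}, T_{E_2}^{k_2}\rangle \;\cong\; \langle T_{E_1}^{k_1}\rangle \ast \langle T_{E_2}^{k_2}\rangle \;\cong\; F_2,
\]
provided that $A_1$ and $A_2$ are nonempty. When $a \geq 3$ this is immediate, since $E_1 \in A_1$ and $E_2 \in A_2$ strictly.

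The main obstacle is the borderline case $a = 2$, where $E_1$ and $E_2$ both sit on the boundary locus $\{f_1 = f_2\}$ and the obvious witnesses fail. The hypothesis $E_1 \not\simeq E_2[l]$ is essential here: without it one would have $T_{E_1} = T_{E_2}$ up to shift (by Theorem~\ref{commute-sph-intro}) and the group would merely be cyclic. To produce nonempty ping-pong sets in this case I would apply the fundamental inequality to an iterated object such as $T_{E_1}^{k_1} E_2$ and use the defining triangle of a spherical twist, together with $E_1 \not\simeq E_2[l]$, to upgrade the inequality to a strict one at this specific point. Once a witness strictly inside $A_1$ (or $A_2$) is in hand, the ping-pong argument concludes as above.
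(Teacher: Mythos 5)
Your ping-pong argument is exactly the paper's: the sets you call $A_1,A_2$ are the paper's $W_1,W_2$, the inclusion $T_{E_1}^{k}(A_2)\subseteq A_1$ is proved by the same chain of inequalities from Theorem \ref{fund-ineq-k-weak-thm} together with $i(E_1,T_{E_1}^kM)=i(E_1,M)$, and the non-emptiness for $i(E_1,E_2)\geq 3$ via $E_2\in A_2$ (resp.\ $E_1\in A_1$) is also how the paper handles that range. So the only substantive issue is the borderline case $i(E_1,E_2)=2$, which you correctly identify as the main obstacle but only sketch.

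That sketch has a genuine gap: the route you propose does not produce a strict inequality. Taking $S=T_{E_1}^{k}E_2$ gives $i(E_1,S)=i(E_1,E_2)=2$, while the fundamental inequality applied to $(E,M,N)=(E_1,E_2,E_2)$ only yields $i(E_2,S)\geq i(E_1,E_2)^2-i(E_2,E_2)=2$, and the defining triangle of the twist only yields the complementary upper bound $i(E_2,S)\leq 6$; nothing here rules out $i(E_2,S)=2$, i.e.\ $S$ sitting on the boundary locus $\{f_1=f_2\}$ just like $E_1$ and $E_2$ themselves. The distinctness hypothesis enters the paper's argument not through the fundamental inequality but through Proposition \ref{distinct}\,(ii): distinctness is equivalent to the composition $\Hom^{\bullet}(E_i,E_j)\otimes\Hom^{\bullet}(E_j,E_i)\to\Hom^{\bullet}(E_i,E_i)$ missing the identity. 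Lemma \ref{distinct-distinguish} uses this to build a witness by hand: it forms $Z:=\Hom^d(E_1,E_1)[-d]\otimes E_1\oplus\Hom^{\bullet}(E_2,E_1)\otimes E_2$, takes $E_1'$ to be the cone of the evaluation $Z\to E_1$, and the vanishing of $\Hom^0(E_1,Z)\to\Hom^0(E_1,E_1)$ (forced by Proposition \ref{distinct}\,(ii)) is precisely what breaks the symmetry in the long-exact-sequence count, giving $i(E_1,E_1')=6>4=i(E_2,E_1')$. Without some such use of the ring structure of $\bigoplus_{i,j}\Hom^{\bullet}(E_i,E_j)$, the case $i(E_1,E_2)=2$ of your proof does not close.
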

We use the ping-pong lemma for the proof. 
This approach is the same as the case of mapping class groups. 
As a corollary, we reveal the relationship between the intersection number and presentations of subgroups generated by two spherical twists (Corollary \ref{braid-rel}, \ref{comm-rel} and \ref{free-rel}). 

%---------------------------------------------------------
\subsection{The center of autoequivalence groups of K3 surfaces}
Let $X$ be a complex algebraic K3 surface and $\Aut_{{\rm CY}}(\D^b(X))$ the subgroup of autoequivalences trivially acting on the transcendental lattice of $X$. 
The autoequivalence groups are usually studied by using the action on the cohomology. 
In contrast to the automorphism groups of K3 surfaces, there are non-trivial autoequivalences trivially acting on the cohomology: squares of spherical twists for example. 
These cohomologically trivial autoequivalences are detected by the action on the space ${\rm Stab}(X)$ of stability conditions, which is Bridgeland’s approach in \cite{BriK3}. 
%differ by the action on the space of Bridgeland stability conditions ${\rm Stab}(X)$ on $\D^b(X)$. 
Then we naturally consider the subgroups $\Aut^{\dagger}(\D^b(X))$ and $\Aut^{\dagger}_{{\rm CY}}(\D^b(X))$ which preserve the distinguished component of ${\rm Stab}(X)$ (see Definition \ref{dist-pres-auteq}). 

%new line
%The center of a group describes how far from the 
The center (Definition \ref{def-center}) measures the commutativity of a given group. 
%The fact that the center of the mapping class group is trivial is proved by
The triviality of the center of the mapping class group is proved via 
the equivalence between the commutativity with Dehn twists and the fixability of simple closed curves (Theorem \ref{Dehn-commute-sph}). 
Similarly, as an application of Theorem \ref{commute-sph-intro}, 
we compute the center groups $Z(\Aut^{\dagger}(\D^b(X)))$ and $Z(\Aut^{\dagger}_{{\rm CY}}(\D^b(X)))$. 
\begin{thm}[Theorem \ref{main}]\label{main-intro}
Let $X$ be a K3 surface of any Picard rank, $m_X$ the  order of the finite cyclic group 
$\Aut_t(X):=
\left\{f\in\Aut(X)~\middle|~ H^2(f)|_{{\rm NS}(X)}=\mathrm{id}_{{\rm NS}(X)} \right\}$,
and $f_t$ a generator of $\Aut_t(X)$.
Then we have the following
\begin{enumerate}
\item
$Z(\Aut^{\dagger}(\D^b(X)))
=\Aut_t(X)\times\ZZ[1]
\simeq(\ZZ/m_X)\times\ZZ$. 
\item
\begin{eqnarray*}
Z(\Aut^{\dagger}_{{\rm CY}}(\D^b(X)))
&=&
\begin{cases}
\langle ~(f_t^*)^{m_X/2}\circ[1]~\rangle & \mbox{if }m_X\mbox{ is even}\\
\ZZ[2] & \mbox{if }m_X\mbox{ is odd}
\end{cases}\\
&\simeq&\ZZ. 
\end{eqnarray*}
\end{enumerate}
\end{thm}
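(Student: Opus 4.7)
My plan is to prove (i) by two containments and then derive (ii) by intersecting $Z(\Aut^{\dagger}(\D^b(X)))$ with $\Aut^{\dagger}_{\mathrm{CY}}(\D^b(X))$. The central tool is Theorem~\ref{commute-sph-intro}: on a K3 surface every line bundle $L$ is $2$-spherical since $\Ext^\bullet(L,L) \cong H^\bullet(X,\OO_X) = K \oplus K[-2]$, giving the theorem great rigidifying power. Since $\ZZ[1]$ is trivially central, (i) reduces to the hard containment $Z(\Aut^{\dagger}(\D^b(X))) \subseteq \Aut_t(X) \times \ZZ[1]$ together with the centrality of each $f^* \in \Aut_t(X)$.

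For the hard direction, take $\Phi \in Z(\Aut^{\dagger}(\D^b(X)))$. Applying Theorem~\ref{commute-sph-intro} with $E_1 = E_2$ spherical gives $\Phi(E) \simeq E[l_E]$ for every spherical $E$. Writing $\Phi(\OO_X) = \OO_X[n]$ and replacing $\Phi$ by $\Phi \circ [-n]$, we may assume $\Phi(\OO_X) = \OO_X$. For ample $L$ the chain $0 \neq H^0(L) = \Hom(\OO_X,L) = \Hom(\Phi\OO_X,\Phi L) = H^{l_L}(L)$ combined with Kodaira vanishing forces $l_L = 0$, so $\Phi(L) = L$. Since $\Phi$ is central and $- \otimes L' \in \Aut^{\dagger}$, we have $\Phi(A \otimes L') = \Phi(A) \otimes L'$; writing any $L \in \Pic(X)$ as a difference of two ample bundles extends $\Phi(L) = L$ to all $L$. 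For a skyscraper $\OO_x$, the identity $\Phi(\OO_x) \otimes L \simeq \Phi(\OO_x)$ together with the point-like endomorphism algebra of $\OO_x$ forces $\Phi(\OO_x) \simeq \OO_{\sigma(x)}[k(x)]$ by Huybrechts' classification of point-like objects; a flatness/family argument, or alternatively commutation with $g^*$ for $g \in \Aut(X)$, identifies $\sigma \in \Aut(X)$, $k \equiv 0$, so $\Phi = \sigma^*$, and $\sigma^*(L) = L$ for all $L$ forces $\sigma \in \Aut_t(X)$. For the easy direction, by Torelli the character $\chi\colon \Aut_t(X) \hookrightarrow \mu_\infty$, $f \mapsto f^*|_{H^{2,0}(X)}$, is injective, so $\Aut_t(X)$ is finite cyclic of order $m_X$; conjugation by $\Aut(X)$ preserves $\chi$, hence $\Aut_t(X) \subseteq Z(\Aut(X))$. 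Cohomologically, $(f^*)^H$ is trivial on the algebraic part of the Mukai lattice and lies in the cyclic Hodge-isometry group of $T(X)$, hence is central in the image of $\Aut^{\dagger}$ in the Hodge isometry group; combined with Mukai's rigidity of spherical objects (yielding $f^*(E) \simeq E[l_E]$, so that $f^*$ commutes with every spherical twist via Theorem~\ref{commute-sph-intro}), this gives centrality of $f^*$.

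For (ii), I run the hard-direction argument inside $\Aut^{\dagger}_{\mathrm{CY}}$: spherical twists and line-bundle twists act on cohomology through algebraic Mukai classes and so lie in $\Aut^{\dagger}_{\mathrm{CY}}$, hence the same argument (with Bondal--Orlov reconstruction replacing the use of commutation with arbitrary $g^*$) shows $Z(\Aut^{\dagger}_{\mathrm{CY}}(\D^b(X))) \subseteq \{f^* \circ [n] : f \in \Aut_t(X),\, n \in \ZZ\}$. Membership in $\Aut^{\dagger}_{\mathrm{CY}}$ is the cohomological condition $f^*|_{T(X)} = (-1)^n \, \mathrm{id}$; using that the characteristic polynomial of $f_t^*$ on $T(X)$ is a power of the $m_X$-th cyclotomic polynomial, the only solutions are $f = \mathrm{id}$ with $n$ even (contributing $\ZZ[2]$) and, exactly when $m_X$ is even, $f = f_t^{m_X/2}$ with $n$ odd (contributing $\langle (f_t^*)^{m_X/2} \circ [1] \rangle$); both yield a copy of $\ZZ$. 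The main obstacle is the easy direction of (i): cohomological centrality of $(f^*)^H$ does not automatically lift to centrality of $f^*$ as an autoequivalence, since the kernel of the cohomological representation $\Aut^{\dagger} \to$ Hodge isometry group is nontrivial. I expect to resolve this by combining the spherical-twist commutation with a structural generation of $\Aut^{\dagger}$ by spherical twists, automorphisms and line-bundle twists, so that it suffices to check $f^*$ commutes with each such generator, which follows from $\Aut_t(X) \subseteq Z(\Aut(X))$ and $f^*L = L$.
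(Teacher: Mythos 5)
Your reduction of the center to autoequivalences fixing $\mathcal{O}_X$ up to shift, and hence fixing all line bundles and skyscrapers, is essentially the paper's route: the paper gets $\Phi(\mathcal{L})=\mathcal{L}[i]$ directly from $\Phi(\mathcal{O}_X\otimes\mathcal{L})=\Phi(\mathcal{O}_X)\otimes\mathcal{L}$ and then invokes the argument of \cite[Lemma A.2]{HuySph}; your Kodaira-vanishing detour is an acceptable variant, and your case analysis in (ii) of which $f^*\circ[n]$ are Calabi--Yau matches the paper. The genuine gap is exactly where you flag it: the inclusion $\Aut_t(X)\subseteq Z(\Aut^{\dagger}(\D^b(X)))$. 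Your proposed fix --- that $\Aut^{\dagger}(\D^b(X))$ is generated by spherical twists, automorphisms and line-bundle tensors, so that it suffices to check commutation with these generators --- is not available: such a generation statement is essentially the content of the simple-connectedness part of Conjecture \ref{Bri-conjecture} and is known only in special cases (e.g.\ Picard rank one), whereas the theorem is asserted for arbitrary Picard rank. Cohomological centrality of $(f_t^*)^H$ cannot close the gap on its own, since $\Aut_0^{\dagger}(\D^b(X))$ is nontrivial, as you yourself observe.

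The paper closes this gap with the stability manifold rather than a generation statement. For $\Psi\in\Aut^{\dagger}(\D^b(X))$ set $\Psi_t:=\Psi\circ f_t^*\circ\Psi^{-1}\circ(f_t^*)^{-1}$. Since $(f_t^*)^H$ is trivial on $\NN(X)$ and lies in the abelian group $\mathrm{O}(T(X))$, which every $\Psi^H$ normalizes, the isometry $\Psi_t^H$ is trivial on the finite-index sublattice $\NN(X)\oplus T(X)$, hence $\Psi_t\in\Aut_0^{\dagger}(\D^b(X))$. By \cite[Lemma A.3]{HuySph} the autoequivalence $f_t^*$ acts trivially on ${\rm Stab}^{\dagger}(X)$, hence so does $\Psi_t$; and by Theorem \ref{aut-deck} the group $\Aut_0^{\dagger}(\D^b(X))$ acts as the deck group of the normal covering ${\rm Stab}^{\dagger}(X)\to\P_0^+(X)$, so a trivially acting element is the identity, giving $\Psi\circ f_t^*=f_t^*\circ\Psi$. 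Without this argument (in particular without invoking Huybrechts' Lemma A.3 and the deck-transformation description, neither of which appears in your proposal) both the easy direction of (i) and the corresponding containment needed for (ii) remain unproved.
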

We also compute the center of the quotient $\Aut^{\dagger}_{{\rm CY}}(\D^b(X))/\ZZ[2]$ (Corollary \ref{orb-fund}), which is closely related to the orbifold fundamental group of the stringy K\"ahler moduli space of $X$. 
These results reveal that the number $m_X$ determines the group structure of the center groups, so we explain some examples of $m_X$ in \S \ref{subsec-eg}. 

%---------------------------------------------------------
\subsection{Related works}

We give some comments on related works with Theorem \ref{intro-fund-ineq-k-weak} and Theorem \ref{intro-free-sph-twist}. 
Historically, in the case of mapping class groups, the proof for the freeness (Theorem \ref{free-Dehn-twist}) of a subgroup generated by two Dehn twists was published by Ishida in 1996 (\cite{Ish}). 
Then ping-pong lemma and the inequality (Theorem \ref{fund-ineq-Dehn}) about the intersection number were key to their proof. 
In a similar manner, 
for the $\ZZ/2$-graded Fukaya ($A_{\infty}$-)categories of exact symplectic manifolds with contact type boundary, 
Keating proved the inequality (\cite[Proposition 7.4]{Kea}) analogous to Theorem \ref{fund-ineq-Dehn}, and the freeness (\cite[Theorem 1.1]{Kea}) for Dehn twists along Lagrangian spheres by ping-pong lemma. 
For algebraic triangulated categories, 
Volkov also proved similar results: the inequality (\cite[Lemma 3.3]{Volk}) under technical assumptions and the freeness (\cite[Theorem 2.7(4)]{Volk}). 
As an another approach to the freeness, in his thesis (\cite[Theorem 4.4]{Kim}), 
Jongmyeong Kim proved that general $n$ spherical twists whose intersection number is greater than one respectively, 
generate the free group of rank $n$ under the formality assumption for some dg-algebra obtained by spherical objects. 
He translates and reformulates Humphries' argument into a categorical setting. 
Ansch\"utz also proved the freeness in a very special case: $\D$ is a CY2-category and $i(E_1, E_2)=2$ in his master thesis (\cite[Theorem 1.2]{Ans}).

In this paper, We use different techniques to prove the inequality (Theorem \ref{intro-fund-ineq-k-weak}) and the freeness (Theorem \ref{intro-free-sph-twist}). 
In our approach expanded in Section \ref{section-preliminaries-dg}~$\sim$~\ref{section-pf-inequality}, we do not need the extra assumptions to prove the inequality. 
The inequality itself is important for some applications. 
Actually, the inequality implies Theorem \ref{commute-sph-intro} and this theorem implies the one-to-one correspondence between the group $\langle T_{E_1},T_{E_2}\rangle$ and intersection number $i(E_1,E_2)$ (Corollary \ref{braid-rel}, \ref{comm-rel} and \ref{free-rel}), and is used in the computations of the center of autoequivalence groups of K3 surfaces in Section \ref{section-center}. 
Moreover, we treat the power of spherical twists throughout this paper. This is important in future applications. 
The formulation of this paper clarifies the analogy between the autoequivalence groups and the mapping class groups. 
We also obtain a description of modules over the graded dual numbers (Proposition \ref{class-fin-dim-mod}) and autoequivalences of its derived categories (Appendix \ref{appendix}). 

%---------------------------------------------------------
%\subsection{Plan of the paper}
%This paper is organized as follows. 

%---------------------------------------------------------
\bigskip
\noindent
{\bf Acknowledgements.}
F.B. was supported by the European Research Council (ERC) under the European Union Horizon 2020 research and innovation programme (grant agreement No.725010). 
K.K. is supported by JSPS KAKENHI Grant Number 20K22310 and 21K13780. 

%---------------------------------------------------------
\bigskip
\noindent
{\bf Notation and Convention.}
%In this paper
%Throughout this paper, 
Let $K$ be a field. 
\begin{itemize}
\item
For a $K$-linear triangulated category $\D$, 
an {\it autoequivalence} of $\D$ is a $K$-linear exact self-equivalence $\D\to\D$.  
The {\it autoequivalence group} $\Aut(\D)$ is the group of (natural isomorphism classes) of autoequivalences of $\D$. 
\item
For a smooth projective variety $X$ over $K$, 
the category $\D^b(X):=\D^b({\rm Coh}(X))$ is the derived category of bounded complexes of coherent sheaves on $X$.
%The category $\D({\rm Qcoh}(X))$ is the derived category of unbounded complexes of quasi-coherent sheaves on $X$. 
\item
A {\it K3 surfaces} $X$ means a complex algebraic K3 surface \emph{i.e.} a smooth projective surface over the complex number field $\CC$ such that $\omega_X\simeq \mathcal{O}_X$ and $H^1(X,\mathcal{O}_X)=0$. 
Let $\Aut(X)$ be the automorphism group of $X$. 
\end{itemize}

%%%%%%%%%%%%%%%%%%%%%%%%%%%%%%%%%%%%%%%%%%%%%%%%%%%%%%%%%%
\section{Preliminaries on dg-categories}\label{section-preliminaries-dg}
For the convenience of the reader, and to set up the notation for dg-categories and related notions, we now briefly recall the notions we will need. 
For a more detailed treatment of dg-categories and their derived categories, the reader is referred to \cite[§ 2.1]{AL}. 
%---------------------------------------------------------
%\subsection{Remainders on dg-categories, dg-modules, and twisted complexes}
%---------------------------------------------------------
\subsection{Dg-categories and dg-modules}
Let us fix a field $K$.
We will write $\textbf{Mod}\text{-}K$ for the category whose objects are couples $(V, d_V)$ where $V = \oplus_{n \in \mathbb{Z}} V_n$ is a $\mathbb{Z}$-graded vector space over $K$ and $d_V \colon V \rightarrow V$ is a $K$-linear endomorphism such that $d_V(V_n) \subset V_{n+1}$ for any $n \in \mathbb{Z}$, and $d_V^2 = 0$. The morphism $d_V$ is called the \emph{differential}.
We will often denote $(V, d_V)$ simply by $V$, and we will call it a $K$-dg-vector space.

Given two $K$-dg-vector spaces $V$ and $W$, morphisms between them are given by
\begin{equation}
    \label{eqn:hom-space-mod-K}
    \Hom_{\textbf{Mod}\text{-}K}(V,W) = \bigoplus_{n \in \mathbb{Z}} \Hom^n_{\textbf{Mod}\text{-}K}(V,W)
\end{equation}
where $f_n \in \Hom^n_{\textbf{Mod}\text{-}}(V,W)$ is a $K$-linear morphism $f \colon V \rightarrow W$ such that $f_n(V_m) \subset W_{m+n}$ for any $m \in \mathbb{Z}$.
The graded $K$-vector space \eqref{eqn:hom-space-mod-K} can be endowed with the differential
\[
    df = d(\{ f_n \}) = \{ d_W \circ f_n - (-1)^n f_n \circ d_V \}
\]
for $f \in \Hom_{\textbf{Mod}\text{-}K}(V,W)$.
Above we introduced the notation $f = \{ f_n \}$, whose right hand side describes the components of $f$ with respect to the direct sum decomposition in \eqref{eqn:hom-space-mod-K}.

The above discussion implies that $\textbf{Mod}\text{-}K$ is naturally enriched over itself.
Even more is true: $\textbf{Mod}\text{-}K$ carries a monoidal structure for which the hom space \eqref{eqn:hom-space-mod-K} is the internal hom.
The tensor product of $V$ and $W$ is defined as
\[
    V \otimes_K W = \bigoplus_{n \in \mathbb{Z}} \left( \bigoplus_{i + j = n} V_n \otimes_K W_m \right)
\]
with differential $d_{V \otimes_K W} = d_V \otimes \mathrm{id} + \mathrm{id} \otimes d_W$.

We can now introduce the notion of a dg-category: a dg-category over $K$ is a small category $\A$ enriched\footnote{We require the composition maps to be closed, degree zero morphisms in $\textbf{Mod}\text{-}K$.} in $\textbf{Mod}\text{-}K$.
Given two dg-categories $\A$ and $\B$, a dg-functor $\Phi \colon \A \rightarrow \B$ is a functor whose induced maps on morphism spaces preserve the degree and the differential.

Notice that to any dg-category $\A$ we can attach a category, called its \emph{homotopy category}, which is denoted by $H^0(\A)$ and whose objects are the same as those of $\A$, but for any $a_1, a_2 \in \A$ we have $\Hom_{H^0(\A)}(a_1, a_2) := H^0(\Hom_{\A}(a_1, a_2))$.

We will be mainly interested in dg-modules over a fixed dg-category $\A$.
An $\A$-dg-module is dg-functor $\A^{\mathrm{op}} \rightarrow \textbf{Mod}\text{-}K$.
Given an $\A$-dg-module $M$, we write $M_a$ for the image of $a \in \A$ via $M$.

We write $\textbf{Mod}\text{-}\A$ for the dg-category of dg-modules over $\A$, and given two $\A$-dg-modules $M$ and $N$ we write $\mathrm{Hom}_{\A}(M,N)$ for the $K$-dg-vector space of morphisms\footnote{The elements of $\mathrm{Hom}_{\A}(M,N)$ are graded natural transformations, see \cite[§ 2.1.1]{AL}.} of $\A$-dg-modules between $M$ and $N$.
Furthermore, we write $\D(\A)$ for the derived category of dg-modules over $\A$, and $\D(\A)^c \subset \D(\A)$ for the subcategory of compact objects.

We write $\P(\A)$ for the full subcategory of $\textbf{Mod}\text{-}\A$ whose objects are given by h-projective dg-modules \emph{i.e.} those modules $P \in \textbf{Mod}\text{-}\A$ such that
\[
    \mathrm{Hom}_{H^0(\textbf{Mod}\text{-}\A)}(P, S) = 0
\]
for any $S \in \textbf{Mod}\text{-}\A$ such that $S_a$ is an acyclic $K$-dg-vector space for any $a \in \A$.

As in the general theory of modules over rings, any dg-module is quasi-isomorphic\footnote{Quasi-isomorphisms of $\A$-dg-modules are defined fiberwise. Namely, a morphism $f \colon M \rightarrow N$ of $\A$-dg-modules is a quasi-isomorphism if $f_a \colon M_a \rightarrow N_a$ is so for any $a \in \A$.} to an h-projective $\A$-dg-module, and we can use h-projective dg-modules both to compute morphisms in the derived category and to derive tensor products of dg-modules, see \cite[§ 2.1.4]{AL} for the definition of the latter.

In the following, we write $\P(\A)^c$ for the subcategory of those h-projective dg-modules that are compact in the derived category.
Furthermore, we write $\otimes=\otimes_K$.

%---------------------------------------------------------
\subsection{Dg-cones}
\label{subsubsection:dg-cone}

The category $\D(\A)$ is a triangulated category with shift functor defined fiberwise \emph{i.e.} given $M \in \D(\A)$ we have $(M[1])_a = M_a[1]$ for any $a \in \A$.

The operation of taking cones in $\D(\A)$ can be strictified to define an operation in $\textbf{Mod}\text{-}\A$.
Namely, given $M, N \in \textbf{Mod}\text{-}\A$ and $f \colon M \rightarrow N$ a closed, degree zero morphism of $\A$-dg-modules, we define a dg-cone of $f$ as a dg-module $C \in \textbf{Mod}\text{-}\A$ endowed with morphisms of degree zero
\[
    \begin{array}{lcr}
        M[1] \xrightarrow{i} C \xrightarrow{p} M[1] & & N \xrightarrow{j} C \xrightarrow{q} N
    \end{array}
\]
such that
\[
    \begin{array}{cccccc}
         pi = \mathrm{id}_{M[1]} & qj = \mathrm{id}_N & ip + jq = \mathrm{id}_{C} & dp = dj = 0 & di = jf & dq = -f p
    \end{array}
\]
It is easy to see that the dg-cone is uniquely defined up to a closed, degree zero isomorphism in $\textbf{Mod}\text{-}\A$, and that for any $f$ the dg-module $C(f) := M[1] \oplus N$ with diffential $d(m,n) = (dm, dn - f(n))$ is a dg-cone of $f$.

\begin{rem}
    \label{rmk:triangle-dg-cone}
    Given $M, N \in \textbf{Mod}\text{-}\A$ and $f \colon M \rightarrow N$ a closed degree zero morphism of $\A$-dg-modules, the dg-cone $C(f)$ is isomorphic, as an element in $\D(\A)$, to the cone of $f$.
    Moreover, we have the distinguished triangle
    \begin{equation}
        \label{eqn:dt-triangle-dg-cone}
        M \xrightarrow{f} N \xrightarrow{j} C(f) \xrightarrow{p} M[1].
    \end{equation}
\end{rem}

%---------------------------------------------------------
\subsection{Twisted complexes}
\label{subsubsection:twisted-complexes}
A notion that will be fundamental for us is that of a (one-sided) twisted complex.
For a more detailed treatment of twisted complexes, the reader is referred to\footnote{A remark is in order. In both the given references, only the notion of a twisted complex with a finite number of non-zero terms is considered. However, the theory still applies for infinite twisted complexes.} \cite{BK} or \cite{AL}.

A one-sided twisted complex over $\A$ is a collection of $\A$-dg-modules $E_i$, $i \in \mathbb{Z}$, and morphisms $\alpha_{ij} \colon E_i \rightarrow E_j$, $i < j$, of degree $i-j+1$ such that
\[
    (-1)^j d\alpha_{ij} + \sum_{i < k < j} \alpha_{kj} \circ \alpha_{ik} = 0
\]
for any $i < j$.

We adopt the following conventions:
\begin{enumerate}
    \item the one-sided twisted complex given by the modules $E_i$ and the morphisms $\alpha_{ij}$ will be denoted by $(E_i, \alpha_{ij})$
    \item \label{item:notation-finite-twisted-complex}
    if $E_i \neq 0$ only for $i = -n, \dots, 0$, and $\alpha_{ij} = 0$ for $j - i > 1$, then we denote the one-sided twisted complex $(E_i, \alpha_{ij})$ by
    \[
        E_{-n} \xrightarrow{\alpha_{-n(-n+1)}} E_{-n+1} \xrightarrow{\alpha_{(-n+1)(-n+2)}} \dots \xrightarrow{\alpha_{-10}} E_0
    \]
\end{enumerate}

One-sided twisted complexes can be packaged into a dg-category.
Given two twisted complexes $(E_i, \alpha_{ij})$ and $(F_i, \beta_{ij})$ morphisms of degree $p$ between them are given by
\begin{equation}
    \label{eqn:hom-space-twisted-complexes}
    \bigoplus_{p = q+l-k} \Hom_{\A}^q(E_k, F_l).
\end{equation}
Given $\gamma \colon (E_i, \alpha_{ij}) \rightarrow (F_i, \beta_{ij})$ a morphism of twisted complexes, the differential of $\gamma$ is defined as follows.
Let us write $\gamma_{q}^{kl} \colon E_k \rightarrow F_l$ for the component $\gamma$ belonging to $\Hom_{\A}^q(E_k, F_l)$.
Then, its differential is given by
\begin{equation}
    \label{eqn:differential-morphisms-twisted-complexes}
    d (\gamma_{q}^{kl}) = (-1)^l d_{\A}(\gamma_q^{kl}) + \sum_{l < m} \beta_{lm} \circ \gamma_q^{kl} - (-1)^{q+l-k} \sum_{m < k} \gamma_q^{kl} \circ \alpha_{mk}
\end{equation}
where $d_{\A}$ is the differential on morphisms in $\A$.

We adopt the following convention to define a morphism of twisted complexes: we write $\gamma = \{ \gamma_q^{kl} \colon E_k \rightarrow F_l\}$ and we mean that $\gamma$ is the morphism of twisted complexes whose component in $\Hom_{\A}^q(E_k, F_l)$ is given by $\gamma_q^{kl}$.
With this convention, if a component is not specified, it means that it is zero.

%---------------------------------------------------------
\subsection{Convolution of twisted complexes}
\label{convolution-twisted-complexes}

Given a one-sided twisted complex $(E_i, \alpha_{ij})$, we define its \emph{convolution} as the $\A$-dg-module $\oplus_{i \in \mathbb{Z}} E_i[-i]$ with differential
\[
    \sum_{i \in \mathbb{Z}} d_{E_i[-i]} + \sum_{i < j} \alpha_{ij}.
\]

The operation of convolution defines a dg-functor from the category of twisted complexes to that of $\A$-dg-modules, see \cite[§ 3.2]{AL}.
In particular, a closed degree zero morphism between twisted complexes induces a closed degree zero morphism between the respective convolutions.

In § \ref{section-pf-inequality} we will convolve various one-sided twisted complexes, and we will need to define elements belonging to such convolutions.
For this reason, we introduce the following notation: if $(E_i, \alpha_{ij})$ is a one-sided twisted complex and $\oplus_{i \in \mathbb{Z}} E_i[-i]$ is its convolution, then we will write
\[
    ( \dots, e_{-3}, e_{-2}, e_{-1}, e_{0}, \dots)
\]
for the element $e \in \oplus_{i \in \mathbb{Z}} E_i[-i]$ whose component in $E_i[-i]$ is given by $e_{i}$, $i \in \mathbb{Z}$.

%---------------------------------------------------------
\subsection{Spherical twists}

From now on we assume that $\A$ is proper \emph{i.e.} that for any $a_1, a_2 \in \A$ the $K$-dg-vector space $\mathrm{Hom}_{\A}(a_1, a_2)$ has finite dimensional total cohomology.
Furthermore, we restrict our attention to those proper dg-categories such that $\D(\A)^c$ has a Serre functor ${\mathcal S}$.
\begin{dfn}\label{def-sph-obj}
For $d\in\mathbb{Z}$, 
an object $E\in\D(\A)^c$ is {\rm $d$-spherical} if it satisfies
$\mathcal{S}(E) \simeq E[d]$ and 
%In the framework of dg-categories, a $d$-spherical object $E \in \D(\A)$, $d \in \mathbb{Z}$, is a non-zero object $E \in \D(\A)^c$ such that $\mathcal{S}(A) \simeq E[d]$ and
\[
    \bigoplus_{n \in \mathbb{Z}} \mathrm{Hom}_{\D(\A)}(E,E[n])[-n] \simeq k \oplus k[-d].
\]
\end{dfn}
Let $E\in\D(\A)^c$ be a spherical object. 
Then the functor 
\[
-\otimes^{\LL}_K E:~\D(K) \to \D(\A)
\]
is a spherical functor in the sense of \cite[Section 5]{AL}. 
The twist functor $T_E\in\Aut(\D(\A))$ associated to $-\otimes^{\LL}_K E$ is called the {\it spherical twist} along $E$. 
\begin{eg}
When $\D(\A)^c=\D^b(X)$, the first condition in Definition \ref{def-sph-obj} is clearly equivalent to $E\otimes \omega_X\simeq E$. 
For a spherical object $E\in\D^b(X)$, the spherical twist $T_E\in\Aut(\D^b(X))$ along $E$ is isomorphic to a Fourier--Mukai transform $\Phi_{\P_E}$ whose kernel is the cone of the composition of the restriction to the diagonal $\Delta$ with the trace
\[
\P_E:=C\left(E^{\vee}\boxtimes E\rightarrow
(E^{\vee}\boxtimes E)|_{\Delta}\xrightarrow{tr} \mathcal{O}_{\Delta} \right)\in\D^b(X\times X), 
\]
see \cite[Section 8.1]{HuyFM} for details. 
\end{eg}
For a spherical object $E\in\D(\A)^c$ and any object $M\in\D(\A)$, 
the object $T_E(M)$ fits into an exact triangle
\begin{equation}\label{sph-tri}
\Hom^{\bullet}(E,M)\otimes_K E\xrightarrow{ev}M\rightarrow T_E(M), 
\end{equation}
where $\Hom^{\bullet}(M,N):=\bigoplus_{p\in\ZZ}\Hom^p(M,N)[-p]$ and $\mathrm{Hom}^p(M,N):=\mathrm{Hom}_{\D(\A)}(M,N[p])$. 

The following is well-known. 
\begin{lem}[cf. {\cite[Chapter 8]{HuyFM}}]\label{sph-elementary}
Let $E\in\D(\A)^c$ be a spherical object. 
We have the following. 
\begin{enumerate}
\item
$T_E(E)=E[1-d]$. 
\item
$\Phi\circ T_E\circ\Phi^{-1}=T_{\Phi(E)}$
for any autoequivalence $\Phi\in\Aut(\D(\A)^c)$. 
\end{enumerate}
\end{lem}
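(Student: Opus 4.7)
The plan is to use the defining exact triangle \eqref{sph-tri} for $T_E$ directly in both parts.

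For part (i), I start from \eqref{sph-tri} with $M = E$. Using the spherical condition $\bigoplus_{n}\mathrm{Hom}(E,E[n])[-n] \simeq K \oplus K[-d]$, the source becomes $(K \oplus K[-d]) \otimes_K E \simeq E \oplus E[-d]$. Under this identification, the evaluation map decomposes as $(\mathrm{id}_E, f) \colon E \oplus E[-d] \to E$, where $f \colon E[-d] \to E$ corresponds to a generator of $\mathrm{Hom}(E,E[d])$ (its precise value is irrelevant for what follows). The key observation is that because the first component is an isomorphism, I can precompose with the automorphism
\[
\begin{pmatrix} \mathrm{id}_E & -f \\ 0 & \mathrm{id}_{E[-d]} \end{pmatrix} \colon E \oplus E[-d] \iso E \oplus E[-d]
\]
to replace $(\mathrm{id}_E, f)$ by $(\mathrm{id}_E, 0)$ up to isomorphism of triangles. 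The cone of $(\mathrm{id}_E, 0)$ is manifestly $E[-d][1] = E[1-d]$, proving $T_E(E) \simeq E[1-d]$.

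For part (ii), I exploit the fact that \eqref{sph-tri} characterises $T_E(M)$ up to isomorphism as the cone of the evaluation. Given any autoequivalence $\Phi \in \mathrm{Aut}(\D(\A)^c)$ and any $M \in \D(\A)$, I apply $\Phi$ to the triangle \eqref{sph-tri} for $M$, obtaining
\[
\mathrm{Hom}^{\bullet}(E,M) \otimes_K \Phi(E) \longrightarrow \Phi(M) \longrightarrow \Phi(T_E(M)).
\]
Since $\Phi$ is an equivalence, the canonical isomorphism $\mathrm{Hom}^{\bullet}(E,M) \simeq \mathrm{Hom}^{\bullet}(\Phi(E),\Phi(M))$ identifies this triangle with the defining triangle for $T_{\Phi(E)}(\Phi(M))$, so $\Phi \circ T_E (M) \simeq T_{\Phi(E)}(\Phi(M))$. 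Substituting $M = \Phi^{-1}(N)$ and functoriality in $N$ give the desired natural isomorphism $\Phi \circ T_E \circ \Phi^{-1} \simeq T_{\Phi(E)}$.

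The main subtlety to be careful about is the compatibility of the evaluation maps in part (ii): one needs to verify that $\Phi$ sends the evaluation $ev$ for $E$ to the evaluation $ev$ for $\Phi(E)$ under the identification of hom-spaces, which follows from functoriality of $\Phi$ applied to the universal element of $\mathrm{Hom}^{\bullet}(E,M)\otimes_K E$. In part (i), the only potential issue is justifying the triangular change of basis in a triangulated (rather than abelian) setting, but this is standard since the relevant $2\times 2$ matrix is an honest isomorphism in $\D(\A)$ and hence induces an isomorphism of triangles.
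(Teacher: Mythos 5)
Your proof is correct and is essentially the standard argument from Huybrechts, Chapter 8, which is exactly what the paper cites in lieu of giving a proof. The only point deserving slightly more care is in (ii): in a bare triangulated category the cone is not functorial, so the object-wise isomorphisms $\Phi(T_E(M)) \simeq T_{\Phi(E)}(\Phi(M))$ do not automatically assemble into a natural isomorphism of autoequivalences; in the paper's setting this is resolved because $T_E$ is defined as the twist of the spherical dg-functor $-\otimes_K E$, for which the relevant cone is taken functorially at the dg level (equivalently, at the level of Fourier--Mukai kernels in the geometric case).
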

We note that, if $d\neq1$, any spherical twist is of infinite order by  Lemma \ref{sph-elementary} (i).  
%We note that any spherical twist is of infinite order by  Lemma \ref{sph-elementary} (i) and $d>1$. 

%%%%%%%%%%%%%%%%%%%%%%%%%%%%%%%%%%%%%%%%%%%%%%%%%%%%%%%%%%
\section{Mapping class groups and Dehn twists}\label{section-MCG}
Dehn twists are fundamental elements in mapping class groups. In this section, 
we recall some well-known facts on Dehn twists, whose analogous statements are considered in the subsequent sections. 

Let $\Sigma_g$ be a connected oriented closed surface of genus $g\ge2$, and ${\rm MCG}(\Sigma_g)$ be the mapping class groups of $\Sigma_g$. 
For an isotopy class of a simple closed curve in $\Sigma_g$, 
the Dehn twist along $a$ is denoted by $T_a\in{\rm MCG}(\Sigma_g)$. 

The following is a fundamental inequality about the behavior of the intersection number via iterations of Dehn twists. 
\begin{thm}[cf. {\cite[Proposition 3.4]{FM}}]\label{fund-ineq-Dehn}
Let $a,b,c$ be isotopy classes of simple closed curves in $\Sigma_g$. 
Then for any $k\in\ZZ\backslash\{0\}$, 
\begin{equation}\label{Dehn-fund-ineq-k-weak}
i(a,b)i(a,c)\le i(T_a^k(c),b)+i(b,c). 
\end{equation}
%\[
%\left| i(T_a^k(c),b)-|k|i(a,b)i(a,c)\right |\le i(b,c). 
%\]
\end{thm}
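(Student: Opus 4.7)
The plan is to reduce the inequality to the classical ``twist-and-count'' argument: represent the three isotopy classes by curves in pairwise minimal position, describe $T_a^k(c)$ explicitly near $a$, and then show that most of the resulting intersections with $b$ cannot be removed by isotopy. Since the stated inequality has no $|k|$ on the right-hand side, the sharper estimate
\[
i(T_a^k(c), b) \ge |k| \cdot i(a,b) \cdot i(a,c) - i(b,c)
\]
is what I will actually establish; the statement in the theorem then follows from $|k| \ge 1$.

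First I would fix simple closed curve representatives $a, b, c$ in pairwise minimal position and in general position (no triple points), so that $|a \cap b| = i(a,b)$, $|a \cap c| = i(a,c)$, $|b \cap c| = i(b,c)$. Choose an annular tubular neighborhood $N$ of $a$ so small that $N \cap b$ consists of $i(a,b)$ disjoint transverse arcs and $N \cap c$ consists of $i(a,c)$ disjoint transverse arcs; in particular all intersection points of $b$ with $c$ lie outside $N$. The Dehn twist $T_a^k$ can be represented by a homeomorphism supported in $N$ which is a $k$-fold twist of the annulus.

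Second, I would describe $T_a^k(c)$ concretely: outside $N$ it coincides with $c$, while inside $N$ each of the $i(a,c)$ arcs of $c \cap N$ is replaced by an arc that winds $|k|$ times around the core of the annulus before reconnecting its endpoints. A single arc winding $|k|$ times around the annulus meets each of the $i(a,b)$ transverse arcs of $b \cap N$ in exactly $|k|$ points (lift to the universal cover of the annulus and observe that the lifts are straight lines with the relevant slopes). Summing, this produces exactly $|k| \cdot i(a,b) \cdot i(a,c)$ intersection points of $T_a^k(c)$ with $b$ inside $N$, together with $i(b,c)$ intersection points outside $N$ coming from $b \cap c$.

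The main obstacle — and the only step that is more than bookkeeping — is to show that these $|k| \cdot i(a,b) \cdot i(a,c)$ interior intersections are essential, so that they give a genuine lower bound on the geometric intersection number $i(T_a^k(c), b)$. I would handle this via the bigon criterion: any innermost bigon between $T_a^k(c)$ and $b$ either lies entirely outside $N$ (so it was already a bigon between $c$ and $b$, contradicting minimal position of $b$ and $c$), or it meets $N$. In the latter case, lifting the portion of the bigon inside $N$ to the universal cover of the annulus shows that the two boundary arcs become straight lines of distinct slopes, which cannot cobound a disk. Hence at most the $i(b,c)$ intersections outside $N$ can participate in a bigon, and the $|k| \cdot i(a,b) \cdot i(a,c)$ interior intersections all survive. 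This gives the displayed lower bound and hence the theorem.
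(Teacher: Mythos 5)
You should first note that the paper does not actually prove this statement: Section \ref{section-MCG} only \emph{recalls} it, citing \cite[Proposition 3.4]{FM}, so your attempt is being measured against the standard surface-topology argument you are in effect reconstructing. Your overall strategy — pairwise minimal position, an annular neighbourhood $N$ of $a$, the explicit model of $T_a^k(c)$ supported in $N$, and the count of $|k|\,i(a,b)\,i(a,c)$ transverse intersection points inside $N$ plus $i(b,c)$ points outside — is the right one and matches the cited source.

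The gap is in your final paragraph. Your case analysis for an innermost bigon (``entirely outside $N$'' versus ``meets $N$, hence lift to the universal cover of the annulus'') omits the generic case: a bigon whose sides cross $\partial N$, with one corner inside $N$ and one corner at a point of $b\cap c$ outside $N$ (or whose sides enter and leave $N$ several times). The universal-cover argument only excludes a bigon that is entirely contained in $N$ with both corners in $N$; it says nothing about straddling bigons. Moreover the conclusion you draw — that all $|k|\,i(a,b)\,i(a,c)$ interior intersections survive, i.e.\ $i(T_a^k(c),b)\ge |k|\,i(a,b)\,i(a,c)$ — is false. Take $c=T_a^{-k}(b)$ with $i(a,b)=1$ and $k=1$: then $T_a^k(c)=b$, so $i(T_a^k(c),b)=0$, while $|k|\,i(a,b)\,i(a,c)=1$ and $i(b,c)=|k|\,i(a,b)^2=1$. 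The theorem holds here with equality, so the $i(b,c)$ term on the right-hand side is genuinely needed and some interior intersections must cancel. What the correct analysis (as in \cite{FM}) establishes is weaker but sufficient: every bigon between $T_a^k(c)$ and $b$ must have at least one corner at one of the $i(b,c)$ points of $b\cap c$ lying outside $N$; since each bigon reduction removes two intersection points at least one of which is such an exterior point, there are at most $i(b,c)$ reductions and at most $i(b,c)$ interior points are lost. This yields $i(T_a^k(c),b)\ge |k|\,i(a,b)\,i(a,c)-i(b,c)$ — exactly the estimate you announced at the outset, and enough for the theorem since $|k|\ge1$ — but it is precisely this step, not the one you wrote, that has to be carried out.
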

This inequality implies the following.  
\begin{thm}[cf. {\cite[\S 3.3 and Fact 3.8]{FM}}]\label{Dehn-commute-sph}
Let $a_1,a_2$ be isotopy classes of simple closed curves in $\Sigma_g$, 
$\phi\in{\rm MCG}(\Sigma_g)$ a mapping class and
$k_1,k_2\in\ZZ\backslash\{0\}$. 
Then the following are equivalent: 
\begin{enumerate}
\item
$\phi\circ T_{a_1}^{k_1}=T_{a_2}^{k_2}\circ\phi$. 
\item
$\phi(a_1)=a_2$ and $k_1=k_2$. 
\end{enumerate}
\end{thm}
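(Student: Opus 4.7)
The plan is to dispose of (ii)$\Rightarrow$(i) immediately via the conjugation formula $\phi \circ T_a \circ \phi^{-1} = T_{\phi(a)}$ (the standard naturality of Dehn twists under mapping classes): substituting $a = a_1$, raising to the $k_1$-th power, and invoking the hypotheses $\phi(a_1) = a_2$ and $k_1 = k_2$ from (ii) gives $\phi \circ T_{a_1}^{k_1} = T_{\phi(a_1)}^{k_1} \circ \phi = T_{a_2}^{k_2} \circ \phi$.

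For the harder direction (i)$\Rightarrow$(ii), I would first apply the same conjugation formula to rewrite the hypothesis as $T_b^{k_1} = T_a^{k_2}$ with $b := \phi(a_1)$ and $a := a_2$. The key step is to specialise Theorem \ref{fund-ineq-Dehn} to the twist $T_a$ with test curve $c := b$, yielding
\[
    i(a,b)^2 \le i(T_a^{k_2}(b), b) + i(b,b).
\]
Because $T_a^{k_2} = T_b^{k_1}$ and every power of $T_b$ preserves $b$ up to isotopy, the curve $T_a^{k_2}(b)$ is isotopic to $b$; combined with the tautological $i(b,b) = 0$ the entire right-hand side vanishes, forcing $i(a,b) = 0$.

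To conclude, I would invoke the standard fact that two distinct isotopy classes $a \ne b$ of essential simple closed curves with $i(a,b) = 0$ have twists $T_a, T_b$ generating a free abelian subgroup of rank two of ${\rm MCG}(\Sigma_g)$; this rules out the nontrivial relation $T_b^{k_1} T_a^{-k_2} = 1$, so necessarily $a = b$, i.e.\ $\phi(a_1) = a_2$. Substituting back gives $T_a^{k_1} = T_a^{k_2}$, and since $T_a$ has infinite order in ${\rm MCG}(\Sigma_g)$ we conclude $k_1 = k_2$. The principal creative step is the choice of test curve $c = b$: once made, the $T_b$-invariance of $b$ collapses Theorem \ref{fund-ineq-Dehn} to $i(a,b) = 0$ in one line, so the real obstacle is reduced to the disjoint-curves structure theorem for ${\rm MCG}(\Sigma_g)$, which I would cite rather than reprove.
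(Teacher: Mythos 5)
Your argument is correct, and it is worth noting that the paper itself gives no proof of this statement --- it is recalled from Farb--Magalit as background --- so the natural point of comparison is the paper's proof of the categorical analogue, Theorem \ref{commute-sph}. The easy direction and the reduction to $T_{\phi(a_1)}^{k_1}=T_{a_2}^{k_2}$ via $\phi T_{a}\phi^{-1}=T_{\phi(a)}$ are the same in both. Where you diverge is in the use of the fundamental inequality: your choice of test curve $c=b$ exploits two facts special to the surface setting, namely $T_b^{k_1}(b)=b$ and $i(b,b)=0$, which collapse the inequality to $i(a,b)^2\le 0$ in one line. Neither fact survives categorification ($T_E(E)=E[1-d]$ and $i(E,E)=2$ for a $d$-spherical object), which is precisely why the paper's Theorem \ref{commute-sph} cannot take this shortcut and instead runs a case analysis on $i(E_1,E_2)\in\{0,1,\ge 2\}$, in each case producing a test object $S$ with $i(T_{E_1}^{k_1}E_2,S)\neq i(T_{E_2}^{k_2}E_2,S)$. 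Your remaining case ($i(a,b)=0$, $a\neq b$) is outsourced to the standard fact that disjoint non-isotopic essential curves give $\langle T_a,T_b\rangle\simeq\ZZ^2$ with the twists as a basis; be aware that this citation is close in content to the $\phi=\mathrm{id}$ case of the theorem itself (it is essentially Fact 3.8 territory in Farb--Margalit), so if you wanted a self-contained argument you would instead pick a curve $c$ with $i(a,c)>0=i(b,c)$ (available by cutting along $b$) and compare $i(T_a^{k_2}(c),c)$ with $i(T_b^{k_1}(c),c)$ --- which is exactly the paper's test-object strategy. The concluding step ($k_1=k_2$ from the infinite order of a Dehn twist) matches the paper. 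One pedantic caveat inherited from the statement rather than your proof: the curves must be essential, since otherwise $T_a=\mathrm{id}$ and the equivalence fails.
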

\begin{thm}[{\cite[Theorem 1.2]{Ish} and cf. \cite[Theorem 3.14]{FM}}]\label{free-Dehn-twist}
Let $a_1,a_2$ be two isotopy classes of simple closed curves in $\Sigma_g$. 
If the intersection number $i(a_1,a_2)\ge2$, then $\langle T_{a_1}^{k_1}, T_{a_2}^{k_2} \rangle\simeq F_2$ for any $k_1,k_2\in\ZZ\backslash\{0\}$, 
where $F_2$ is the free group of rank $2$. 
\end{thm}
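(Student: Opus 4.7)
The plan is to apply the ping-pong lemma to the natural action of $\mathrm{MCG}(\Sigma_g)$ on the set $\mathcal{C}$ of isotopy classes of simple closed curves in $\Sigma_g$. Writing $g_i := T_{a_i}^{k_i}$ for $i=1,2$, it suffices to exhibit disjoint non-empty subsets $X_1, X_2 \subset \mathcal{C}$ such that $g_1^n(X_2) \subset X_1$ and $g_2^n(X_1) \subset X_2$ for every $n \in \ZZ \setminus \{0\}$; ping-pong applied to the two infinite cyclic subgroups $\langle g_1 \rangle$ and $\langle g_2 \rangle$ then gives $\langle g_1, g_2 \rangle \simeq \langle g_1 \rangle * \langle g_2 \rangle \simeq F_2$.

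For the ping-pong sets I would take
\[
    X_1 := \{\, c \in \mathcal{C} \mid i(c, a_2) > i(c, a_1) \,\}, \qquad X_2 := \{\, c \in \mathcal{C} \mid i(c, a_1) > i(c, a_2) \,\},
\]
which are manifestly disjoint, and which are non-empty because $i(a_i, a_i) = 0$ combined with the hypothesis $i(a_1, a_2) \geq 2$ forces $a_1 \in X_1$ and $a_2 \in X_2$. The heart of the proof is the verification of the ping-pong inclusions, for which Theorem \ref{fund-ineq-Dehn} is the decisive tool. Fix $c \in X_2$ and $n \in \ZZ \setminus \{0\}$. Since $T_{a_1}$ fixes the isotopy class $a_1$, we have $i(g_1^n(c), a_1) = i(c, a_1)$. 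Applying the fundamental inequality with the twist along $a_1$ and the pair $(b, c) = (a_2, c)$ yields
\[
    i(a_1, a_2)\, i(a_1, c) \leq i(g_1^n(c), a_2) + i(a_2, c),
\]
and the assumption $i(a_1, a_2) \geq 2$ rearranges this to $i(g_1^n(c), a_2) \geq 2\, i(a_1, c) - i(a_2, c)$. Because $c \in X_2$ means $i(a_1, c) > i(a_2, c)$, we conclude
\[
    i(g_1^n(c), a_2) \;\geq\; 2\, i(a_1, c) - i(a_2, c) \;>\; i(a_1, c) \;=\; i(g_1^n(c), a_1),
\]
so $g_1^n(c) \in X_1$. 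The symmetric argument swapping the roles of $a_1$ and $a_2$ gives $g_2^n(X_1) \subset X_2$, completing the hypotheses of the ping-pong lemma.

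The technical heavy lifting is thus entirely absorbed into Theorem \ref{fund-ineq-Dehn}; once that is granted, the remainder of the argument is bookkeeping. The one subtle point to watch is that the hypothesis $i(a_1, a_2) \geq 2$ is used precisely to upgrade the trivial bound $i(a_1, c) - i(a_2, c) > 0$ on $X_2$ to the strict inequality $2\, i(a_1, c) - i(a_2, c) > i(a_1, c)$ required to conclude $g_1^n(c) \in X_1$; with $i(a_1, a_2) = 1$ the argument genuinely breaks down, in agreement with the fact that Dehn twists along curves meeting once satisfy a braid relation and therefore do not generate a free group.
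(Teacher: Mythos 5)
Your proof is correct, and it is precisely the strategy the paper itself follows: the statement is only cited (from Ishida), but your ping-pong argument with the sets $X_1,X_2$ defined by comparing intersection numbers, combined with Theorem \ref{fund-ineq-Dehn} and the invariance $i(T_{a_1}^n(c),a_1)=i(c,a_1)$, is exactly the argument the paper carries out for the categorical analogue in Theorem \ref{free-sph-twist}. No gaps; the non-emptiness check via $i(a_j,a_j)=0$ and the use of $i(a_1,a_2)\ge 2$ to get the strict inequality are both handled correctly.
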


%%%%%%%%%%%%%%%%%%%%%%%%%%%%%%%%%%%%%%%%%%%%%%%%%%%%%%%%%%
\section{Intersection number}
%We collect standard properties of spherical twists, but some are new. 
%We firstly recall the definition of spherical twists, and then 
We consider group-theoretic properties of spherical twists via the intersection number introduced in this section. 

Throughout this section, we suppose $d>1$ and 
$\D:=\D(\A)$ is a derived category of a proper dg-category $\A$ such that
$\D^c:=\D(\A)^c$ has a Serre functor ${\mathcal S}$. 
%as in Section 2. 

%Throughout this section, $X$ is a smooth projective variety over a field $K$ of dimension $d>1$. 

%The assumption of $d>1$ is needed in Proposition \ref{obj-twist} and after that. 

%---------------------------------------------------------
\begin{dfn}
For any two objects $M,N\in\D$, the
{\rm intersection number} $i(M,N)$ of $M$ and $N$ is defined by
\[
i(M,N):=\sum_{p\in\ZZ}\dHom^p(M,N)\in\ZZ_{\geq0}\sqcup\{+\infty\}, 
\]
where $\dHom^p(M,N):=\dim_K\Hom^p(M,N)$. 
\end{dfn}
For a $d$-spherical object $E\in\D^c$, 
the condition $\mathcal{S}(E)\simeq E[d]$ implies
$i(E,M)=i(M,E)$ for any $M\in\D$. 
\begin{eg}
Let $X$ be a K3 surface and $C_1,C_2$ distinct $(-2)$-curves.
Then
$\mathcal{O}_{C_1}(j_1),\mathcal{O}_{C_2}(j_2)\in\D^b(X)$ are 2-spherical for all $j_1,j_2\in\ZZ$ and 
\[
i(\mathcal{O}_{C_1}(j_1),\mathcal{O}_{C_2}(j_2))
=\dHom^1(\mathcal{O}_{C_1}(j_1),\mathcal{O}_{C_2}(j_2))
=C_1.C_2, 
\]
where $C_1.C_2$ is the intersection number of $C_1$ and $C_2$ on $H^2(X,\ZZ)$. 
\end{eg}
The first main result is a complete analogue of the inequality (\ref{Dehn-fund-ineq-k-weak}) in Theorem \ref{fund-ineq-Dehn} as follows. 
The proof is given in Section \ref{section-pf-inequality}. 
\begin{thm}\label{fund-ineq-k-weak-thm}
    Let $E\in\D^c$ be a $d$-spherical object and $M,N\in\D$ objects such that $i(E,M)$ and $i(E,N)<\infty$. 
    For any $k\in\ZZ\backslash\{0\}$, we have
    \begin{equation}\label{fund-ineq-k-weak}
    i(E,M)i(E,N)\le
    i(T^k_EM,N)+i(M,N).
    \end{equation}
\end{thm}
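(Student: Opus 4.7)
The approach is to encode $T_E^k M$ as the convolution of an explicit one-sided twisted complex over $\A$, extract a distinguished triangle comparing it to $M$, and then derive the bound from the long exact sequence obtained by applying $\Hom(-,N)$, together with the basic subadditivity $i(A,N)\le i(B,N)+i(C,N)$ valid for any distinguished triangle $A\to B\to C$.

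The case $k=1$ is immediate: the defining triangle \eqref{sph-tri}
\[
\Hom^\bullet(E,M)\otimes E \xrightarrow{ev} M \to T_E M
\]
combined with $i(\Hom^\bullet(E,M)\otimes E,N)=i(E,M)\cdot i(E,N)$ and subadditivity gives the inequality directly.

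For general $k\ge 2$ I would iterate this triangle. Using $T_E(E)=E[1-d]$ from Lemma \ref{sph-elementary}(i), applying $T_E^{j-1}$ to the $k=1$ triangle yields, for each $1\le j\le k$, a distinguished triangle
\[
V\otimes E[(j-1)(1-d)] \to T_E^{j-1}M \to T_E^j M,
\]
where $V=\Hom^\bullet(E,M)$. Lifting these to dg-cones (Section \ref{subsubsection:dg-cone}) and gluing them using the twisted complex machinery of Sections \ref{subsubsection:twisted-complexes}--\ref{convolution-twisted-complexes}, the module $T_E^k M$ is identified with the convolution of a single one-sided twisted complex whose rightmost term is $M$ and whose other terms are the $k$ shifted copies $V\otimes E[(j-1)(1-d)]$, $j=1,\dots,k$. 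The filtration of this convolution that puts $M$ at the top yields a distinguished triangle
\[
C_k \to M \to T_E^k M,
\]
where $C_k$ belongs to the triangulated subcategory generated by $E$, given by the convolution of the truncation consisting only of the $E$-terms. Subadditivity then reduces the theorem to the lower bound
\[
i(C_k,N)\ge i(E,M)\cdot i(E,N).
\]

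The hard part is precisely this lower bound. A naive term-by-term estimate only produces the wrong-direction upper bound $i(C_k,N)\le k\cdot i(E,M)\cdot i(E,N)$, so one must exploit the structure of the connecting differentials in the twisted complex. These differentials are rigidly determined by the $d$-spherical structure of $E$ (which for $d>1$ forces the dg-endomorphism algebra of $E$ to be formal and quasi-isomorphic to the graded dual numbers $D_d$) together with the induced $D_d$-module structure on $V$. My plan is to decompose $V$ into its indecomposable $D_d$-summands via the structure theorem for finite-dimensional dg-modules over $D_d$ (Proposition \ref{class-fin-dim-mod}), track the explicit image of each summand through the iterated twisted complex, and verify that after all cancellations a single copy's worth of $V\otimes E$ survives, giving the sharp contribution $i(E,M)\cdot i(E,N)$ to $i(C_k,N)$. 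The case $k\le -1$ is then symmetric: either apply the $k\ge 1$ case with $M$ replaced by $T_E^k M$ and $k$ replaced by $-k$, or run the analogous argument using the cotwist triangle for $T_E^{-1}$ coming from $\mathcal{S}(E)\simeq E[d]$.
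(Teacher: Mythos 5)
Your proposal follows essentially the same route as the paper: both express a power of $T_E$ applied to one of the objects as the convolution of a twisted complex governed by the graded dual numbers, reduce by subadditivity on the resulting distinguished triangle to a lower bound on the intersection number of the $E$-part, and establish that bound by decomposing the relevant Hom-complexes into indecomposable $A_d$-modules via Proposition \ref{class-fin-dim-mod} and verifying that enough cohomology survives the cancellations. The work you defer---strictifying the $A_d$-action on an h-projective model of $E$ (Proposition \ref{good-rep-sph}) so that the twisted-complex differentials are literally multiplication by $\epsilon$, and the case-by-case check that $\dim_K H^{\bullet}((T_1\otimes_{A_d}T_2)_{-k})\ge\dim_K H^{\bullet}(T_1)\cdot\dim_K H^{\bullet}(T_2)$ for each pair of indecomposables---is exactly where the paper's technical effort lies, but your outline is the correct one.
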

We then consider the analogue of Theorem \ref{Dehn-commute-sph}. 
\begin{prop}[{\cite[Lemma B.3]{Kim}} and {\cite[Section 4.4]{Kea}}]\label{distinct}
Let $E_1,E_2\in\D^c$ be $d$-spherical objects. 
Then the following are equivalent: 
\begin{enumerate}
\item
There is no integer $l\in\ZZ$ such that $E_1\simeq E_2[l]$.

\item
The composition map
$\Hom^{\bullet}(E_i,E_j)\otimes\Hom^{\bullet}(E_j,E_i)\to \Hom^{\bullet}(E_i,E_i)$
does not hit the identity $\mathrm{id}_{E_i}$ for every $i\neq j$. 
\item
The composition maps
$\Hom^d(E_i,E_i)[-d]\otimes\Hom^{\bullet}(E_i,E_j)\to \Hom^{\bullet+d}(E_i,E_j)$
 and 
$\Hom^{\bullet}(E_i,E_j)\otimes\Hom^d(E_j,E_j)[-d]\to \Hom^{\bullet+d}(E_i,E_j)$
vanish for all $i\neq j$. 
\end{enumerate}
\end{prop}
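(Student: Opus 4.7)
The plan is to prove the equivalences (i)$\Leftrightarrow$(ii) and (ii)$\Leftrightarrow$(iii) separately. Let $\alpha_i$ denote a fixed generator of the one-dimensional space $\Hom^d(E_i,E_i)$. Two consequences of sphericalness are pivotal: the algebra $\Hom^{\bullet}(E_i,E_i) \simeq K \oplus K[-d]$ is concentrated in degrees $0$ and $d$, with $\Hom^0(E_i,E_i) = K \cdot \mathrm{id}_{E_i}$ (so the degree-$0$ image of the composition map is a subspace of $K \cdot \mathrm{id}_{E_i}$, hence ``hitting $\mathrm{id}_{E_i}$'' is equivalent to the existence of a single pair $(f,g)$ with $g \circ f$ a nonzero scalar multiple of $\mathrm{id}_{E_i}$), and the two Serre duality composition pairings
\[
\Hom^p(E_i,E_j) \otimes \Hom^{d-p}(E_j,E_i) \longrightarrow \Hom^d(E_i,E_i) \simeq K, \qquad \Hom^p(E_i,E_j) \otimes \Hom^{d-p}(E_j,E_i) \longrightarrow \Hom^d(E_j,E_j) \simeq K
\]
are both nondegenerate.

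For (i)$\Leftrightarrow$(ii), the direction (ii)$\Rightarrow$(i) is immediate: an isomorphism $E_i \iso E_j[l]$ and its inverse witness a pair with $g \circ f = \mathrm{id}_{E_i}$, violating (ii). Conversely, assume (ii) fails; after rescaling we obtain $f \in \Hom^p(E_i,E_j)$ and $g \in \Hom^{-p}(E_j,E_i)$ with $g \circ f = \mathrm{id}_{E_i}$. Then $(f \circ g)^2 = f \circ (g \circ f) \circ g = f \circ g$, so $f \circ g$ is an idempotent in $\Hom^0(E_j[p],E_j[p]) = K \cdot \mathrm{id}$, hence equals $0$ or $\mathrm{id}$. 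The value $0$ would force $f = (f \circ g) \circ f = 0$, contradicting $g \circ f = \mathrm{id}_{E_i}$; therefore $f \circ g = \mathrm{id}_{E_j[p]}$, and $f$ is an isomorphism $E_i \iso E_j[p]$, negating (i).

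For (ii)$\Rightarrow$(iii), let $f \in \Hom^p(E_i,E_j)$. To prove $f \circ \alpha_i = 0$, pair $f \circ \alpha_i \in \Hom^{p+d}(E_i,E_j)$ against an arbitrary $w \in \Hom^{-p}(E_j,E_i)$ using the $E_i$-Serre pairing: by associativity $w \circ (f \circ \alpha_i) = (w \circ f) \circ \alpha_i$, and $w \circ f = 0 \in \Hom^0(E_i,E_i)$ by (ii), so the pairing vanishes and nondegeneracy forces $f \circ \alpha_i = 0$. The vanishing $\alpha_j \circ f = 0$ is symmetric, now invoking the $E_j$-Serre pairing: $(\alpha_j \circ f) \circ w = \alpha_j \circ (f \circ w)$, with $f \circ w = 0 \in \Hom^0(E_j,E_j)$ by (ii) applied with the indices swapped. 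Conversely, for (iii)$\Rightarrow$(ii), suppose $g \circ f = \lambda \mathrm{id}_{E_i}$; postcomposing with $\alpha_i$ yields $\lambda \alpha_i = \alpha_i \circ g \circ f = (\alpha_i \circ g) \circ f$, which vanishes because (iii), applied to $g \in \Hom^{\bullet}(E_j,E_i)$, gives $\alpha_i \circ g = 0$. Hence $\lambda = 0$.

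The main obstacle, truly just bookkeeping, is to select the correct Serre pairing in each half of (ii)$\Rightarrow$(iii): moving $\alpha_i$ past $f$ on the right requires the $E_i$-pairing (so the leftover $w \circ f$ lands in $\Hom^0(E_i,E_i)$ and is killed by (ii)), while moving $\alpha_j$ past $f$ on the left requires the $E_j$-pairing (so the leftover $f \circ w$ lands in $\Hom^0(E_j,E_j)$). Beyond this choice, every step reduces to a single associativity manipulation combined with the bigraded structure $\Hom^{\bullet}(E,E) \simeq K \oplus K[-d]$ of a spherical object.
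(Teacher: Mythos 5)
The paper does not actually prove this proposition --- it is stated with citations to \cite[Lemma B.3]{Kim} and \cite[Section 4.4]{Kea} and no argument is given --- so there is no in-paper proof to compare against. Your argument is correct and complete: the reduction of ``hits the identity'' to a single pair $(f,g)$ with $g\circ f=\lambda\,\mathrm{id}_{E_i}$, $\lambda\neq 0$ (valid because the degree-zero image sits inside the one-dimensional space $K\cdot\mathrm{id}_{E_i}$), the idempotent trick showing $f\circ g=\mathrm{id}$ in $(i)\Leftrightarrow(ii)$, and the careful choice of which Serre pairing (into $\Hom^d(E_i,E_i)$ versus $\Hom^d(E_j,E_j)$) to invoke in each half of $(ii)\Leftrightarrow(iii)$ are all handled correctly, and this is in the same spirit as the cited arguments of Kim and Keating.
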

Two objects $M,N\in\D$ are called {\it distinct} if there is no integer $l\in\ZZ$ such that $M\simeq N[l]$ (cf. Proposition \ref{distinct}(i)). 
\begin{lem}\label{distinct-distinguish}
Let $E_1,E_2\in\D^c$ be distinct $d$-spherical objects. 
Then there exists an object $S\in\D^c$ such that 
\[
i(E_1,S)>i(E_2,S). 
\]
\end{lem}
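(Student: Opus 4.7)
The plan is to perform a case analysis on $n:=i(E_1,E_2)$, noting that $n=i(E_2,E_1)$ by Serre duality (both $E_i$ satisfy $\mathcal{S}(E_i)\simeq E_i[d]$). If $n<2$, I would take $S:=E_1$: the spherical condition gives $i(E_1,E_1)=2$ while $i(E_2,E_1)=n<2$. Symmetrically, if $n>2$, I would take $S:=E_2$, using $i(E_2,E_2)=2<n=i(E_1,E_2)$. All the real work is in the remaining case $n=2$.

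When $n=2$, my plan is to take $S:=T_{E_2}(E_1)$, which lies in $\D^c$ because it is the cone of a morphism between compact objects. The value on the $E_2$ side is immediate from Lemma \ref{sph-elementary}(i) applied to $T_{E_2}^{-1}$:
\[
i(E_2,S)=i(T_{E_2}^{-1}(E_2),E_1)=i(E_2[d-1],E_1)=i(E_2,E_1)=2,
\]
so it remains to show $i(E_1,S)>2$. For this, I would apply $\Hom^{\bullet}(E_1,-)$ to the defining triangle
\[
\Hom^{\bullet}(E_2,E_1)\otimes_K E_2\xrightarrow{ev} E_1\to T_{E_2}(E_1)
\]
and analyse the resulting long exact sequence of graded $K$-vector spaces. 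Writing $V:=\Hom^{\bullet}(E_2,E_1)\otimes\Hom^{\bullet}(E_1,E_2)$ and $W:=\Hom^{\bullet}(E_1,E_1)$, the induced map $f\colon V\to W$ is the composition $\psi\otimes\varphi\mapsto\psi\circ\varphi$, and the standard rank accounting for such a triangle yields
\[
i(E_1,S)=\dim V+\dim W-2\,\mathrm{rank}(f)=n^2+2-2\,\mathrm{rank}(f)=6-2\,\mathrm{rank}(f).
\]

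The main obstacle is thus to bound $\mathrm{rank}(f)$ from above, and this is where the distinctness hypothesis enters. My plan is to invoke Proposition \ref{distinct}(ii): the image of $f$ must miss $\mathrm{id}_{E_1}\in W$; since $W=K\cdot\mathrm{id}_{E_1}\oplus\Hom^d(E_1,E_1)$ is a direct sum of two one-dimensional graded pieces, this forces $\mathrm{rank}(f)\leq 1$, and hence $i(E_1,S)\geq 4>2=i(E_2,S)$, as required.
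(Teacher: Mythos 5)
Your proof is correct, and in the only nontrivial case $i(E_1,E_2)=2$ it takes a genuinely different route from the paper. The paper follows Kim's construction: it forms $Z=\Hom^d(E_1,E_1)[-d]\otimes E_1\oplus\Hom^{\bullet}(E_2,E_1)\otimes E_2$, takes the cone $E_1'$ of the evaluation map $Z\to E_1$, and computes $\dHom^i(E_j,Z)$ degree by degree to get $i(E_1,E_1')=6>4=i(E_2,E_1')$; Proposition \ref{distinct}(ii) enters there to show $\Hom^0(E_1,Z)\to\Hom^0(E_1,E_1)$ vanishes. You instead take $S=T_{E_2}(E_1)$, compute $i(E_2,S)=2$ by equivariance and Lemma \ref{sph-elementary}(i), and bound $i(E_1,S)=6-2\,\mathrm{rank}(f)$ via the Euler-characteristic bookkeeping for the long exact sequence of the twist triangle; the same Proposition \ref{distinct}(ii) then forces the degree-zero component of the graded composition map $f$ to vanish (its image is a graded subspace of $K\cdot\mathrm{id}_{E_1}\oplus\Hom^d(E_1,E_1)$ missing $\mathrm{id}_{E_1}$), so $\mathrm{rank}(f)\le 1$ and $i(E_1,S)\ge 4>2$. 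Your argument is cleaner in that it avoids the explicit tables of $\dHom^i(E_j,Z)$ and the auxiliary summand $\Hom^d(E_1,E_1)[-d]\otimes E_1$, and it reuses the twist triangle (\ref{sph-tri}) already central to the paper; the paper's version, on the other hand, produces exact values for both intersection numbers and stays closer to the reference \cite{Kim} it builds on. Both hinge on the same input, namely Proposition \ref{distinct}(ii), so the logical dependencies are unchanged.
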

\begin{pf}
When $i(E_1,E_2)\le1$ (resp. $i(E_1,E_2)\ge3$), it suffices to set $S:=E_1$ (resp. $S:=E_2$). 

Let us consider the case of $i(E_1,E_2)=2$. 
By shifting, we may assume that 
$i(E_1,E_2)=\dHom^0(E_1,E_2)+\dHom^p(E_1,E_2)$ for some $p\ge0$. 
Following the arguments in the proof of \cite[Proposition 5.1]{Kim}, 
we define
\[
Z:=\Hom^d(E_1,E_1)[-d]\otimes E_1\oplus \Hom^{\bullet}(E_2, E_1)\otimes E_2\in\D^c, 
\]
and let $E'_1$ be the cone of the natural evaluation map \emph{i.e.} 
$Z\to E_1\to E'_1$ is an exact triangle in $\D^c$. 
Applying $\Hom(E_j,-)~(j=1,2)$ to this triangle, 
we see that 
$\Hom^d(E_1,Z)\to\Hom^d(E_1,E_1)$ is surjective,  and
$\Hom^i(E_2,Z)\to\Hom^i(E_2,E_1)$ are surjective for all $i\in\ZZ$. 
Moreover by Proposition \ref{distinct} (ii), 
$\Hom^0(E_1,Z)\to\Hom^0(E_1,E_1)$ is zero. 
For $p=0$, we have
\[
\dHom^i(E_1,Z)=
\begin{cases}
5&i=d\\
1&i=2d\\
0&otherwise
\end{cases}
~~\text{  }~~
\dHom^i(E_2,Z)=
\begin{cases}
2&i=d\\
4&i=2d\\
0&otherwise
\end{cases}, 
\]
and for $p>0$, 
\[
\dHom^i(E_1,Z)=
\begin{cases}
1&i=d-p\\
3&i=d\\
1&i=d+p\\
1&i=2d\\
0&otherwise
\end{cases}
~~\text{  }~~
\dHom^i(E_2,Z)=
\begin{cases}
1&i=d-p\\
1&i=d\\
2&i=2d-p\\
2&i=2d\\
0&otherwise
\end{cases}, 
\]
where we have $\dHom^{d+p}(E_1,Z)=2$ (resp. $\dHom^{d}(E_2,Z)=3$) if $d+p=2d$ (resp. $d=2d-p$). 
Direct computations give $i(E_1,E'_1)=6>4=i(E_2,E'_1)$. 
It therefore suffices to set $S:=E'_1$. 
\qed
\end{pf}

The following is the analogue of Theorem \ref{Dehn-commute-sph}. 
\begin{thm}\label{commute-sph}
Let $E_1,E_2\in\D^c$ be $d$-spherical objects, $\Phi\in\Aut(\D^c)$ an autoequivalence and
$k_1,k_2\in\ZZ\backslash\{0\}$. 
Then the following are equivalent: 
\begin{enumerate}
\item
$\Phi\circ T_{E_1}^{k_1}=T_{E_2}^{k_2}\circ\Phi$. 
\item
$\Phi(E_1)=E_2[l]$ for some $l\in\ZZ$, and $k_1=k_2$. 
\end{enumerate}
\end{thm}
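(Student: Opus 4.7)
The plan is to prove the two implications separately. The direction $(\mathrm{ii}) \Rightarrow (\mathrm{i})$ is immediate from Lemma \ref{sph-elementary}(ii): given $\Phi(E_1) = E_2[l]$, one has $\Phi \circ T_{E_1} \circ \Phi^{-1} = T_{\Phi(E_1)} = T_{E_2[l]} = T_{E_2}$ (using that a spherical twist is unchanged under shift of the underlying spherical object), and combined with $k_1 = k_2$ this gives (i).

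The bulk of the work is $(\mathrm{i}) \Rightarrow (\mathrm{ii})$. Setting $E'_1 := \Phi(E_1)$, still $d$-spherical, Lemma \ref{sph-elementary}(ii) rewrites (i) as $T_{E'_1}^{k_1} = T_{E_2}^{k_2}$, and the task reduces to proving $E'_1 \simeq E_2[l]$ for some $l$: once this is established, $T_{E_2}^{k_1-k_2} = \mathrm{id}$, and since $T_{E_2}$ has infinite order by Lemma \ref{sph-elementary}(i) (as $d > 1$), $k_1 = k_2$ follows. The whole argument revolves around the master identity
\[
    T_{E'_1}^{k_1}(E_2) \;=\; T_{E_2}^{k_2}(E_2) \;=\; E_2[k_2(1-d)],
\]
which uses $T_F(F) = F[1-d]$ for spherical $F$. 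Assume for contradiction that $E'_1$ and $E_2$ are distinct, and set $\alpha := i(E'_1, E_2)$. Applying Theorem \ref{fund-ineq-k-weak-thm} with $E = E'_1$, $M = N = E_2$ and invoking the master identity gives $\alpha^2 \leq 4$, so $\alpha \in \{0,1,2\}$.

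The case $\alpha = 2$ follows from Theorem \ref{fund-ineq-k-weak-thm} used with $M = E_2$ and arbitrary $N$, together with its symmetric counterpart swapping the roles of $E'_1$ and $E_2$: these yield $2\,i(E'_1, N) \leq 2\,i(E_2, N) \leq 2\,i(E'_1, N)$, hence $i(E'_1, N) = i(E_2, N)$ for every $N$, contradicting Lemma \ref{distinct-distinguish}. The case $\alpha = 0$ is handled directly: the defining triangle of the twist collapses to $T_{E'_1}(E_2) \simeq E_2$, so the master identity becomes $E_2 \simeq E_2[k_2(1-d)]$, and the concentration of $\Hom^{\bullet}(E_2, E_2)$ in degrees $0$ and $d$ forces $k_2(1-d) = 0$, contradicting $k_2 \neq 0$.

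I expect $\alpha = 1$ to be the main obstacle, since the inequalities above become equalities. The strategy is to unpack the iterated extension structure of $F_j := T_{E'_1}^j(E_2)$. Let $p$ be the unique degree with $\Hom^p(E'_1, E_2) = K$; by Serre duality, $\Hom^{\bullet}(E_2, E'_1)$ is concentrated in degree $d - p$. Applying $\Hom^{\bullet}(E'_1, -)$ to the master identity and matching the unique nonzero degree yields $k_1 = k_2 =: k$. By autoequivalence, $\Hom^{\bullet}(E'_1, F_{j-1})$ is one-dimensional in degree $p + (j-1)(d-1)$, so the defining triangles of the twist take the form
\[
    F_{j-1} \longrightarrow F_j \longrightarrow E'_1[-p-(j-1)(d-1)+1] \longrightarrow F_{j-1}[1].
\]
A direct check gives $\Hom^0(E_2, E'_1[-p-(j-1)(d-1)+1]) = \Hom^{-1}(E_2, E'_1[-p-(j-1)(d-1)+1]) = 0$ for every $j \geq 1$, so applying $\Hom(E_2, -)$ to each triangle, the long exact sequence produces an isomorphism $\Hom^0(E_2, F_{j-1}) \iso \Hom^0(E_2, F_j)$. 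By induction, $\Hom^0(E_2, F_k) \cong \Hom^0(E_2, E_2) = K$ when $k > 0$, whereas the master identity forces $\Hom^0(E_2, F_k) = \Hom^{k(1-d)}(E_2, E_2) = 0$ since $k(1-d) < 0$ lies outside $\{0, d\}$: contradiction. The case $k < 0$ is symmetric, using $\Hom^d$ in place of $\Hom^0$ via Serre duality.
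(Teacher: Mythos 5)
Your proof is correct, and while it shares the paper's overall skeleton (reduce to $\Phi=\mathrm{id}$ via Lemma \ref{sph-elementary}(ii), assume the spherical objects are distinct, case-split on the intersection number, and exploit Theorem \ref{fund-ineq-k-weak-thm} together with Lemma \ref{distinct-distinguish}), it diverges in two substantive ways. First, you open by applying the fundamental inequality to $M=N=E_2$ together with the identity $T_{E'_1}^{k_1}(E_2)=E_2[k_2(1-d)]$ to force $i(E'_1,E_2)\le 2$; the paper never needs this reduction because it treats all of $i\ge 2$ uniformly, producing a test object $S$ with $i(E_1,S)>i(E_2,S)$ from Lemma \ref{distinct-distinguish} and showing the two twists act differently on $S$ (your $\alpha=2$ case reaches the same contradiction by squeezing $i(E'_1,N)=i(E_2,N)$ for all $N$ from the inequality applied in both directions, which is essentially equivalent). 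Second, and more significantly, in the case $i=1$ the paper imports a separating object $S$ with $i(E_1,S)=1$, $i(E_2,S)=0$ from an external result of Kim (\cite[Proposition 5.1 and Remark 5.3]{Kim}) and then applies the inequality once more, whereas you give a self-contained argument: a long-exact-sequence induction on the iterated twists $F_j=T_{E'_1}^j(E_2)$ showing $\Hom^0(E_2,F_k)\cong K$, against the vanishing forced by $F_k\simeq E_2[k(1-d)]$. Your version buys independence from the cited construction at the cost of a longer degree bookkeeping; both are valid. One small point worth tightening: in the $\alpha=0$ case, concentration of $\Hom^\bullet(E_2,E_2)$ in degrees $0$ and $d$ a priori allows $k_2(1-d)=d$ as well (realized when $d=2$, $k_2=-2$), so you should add the observation that $E_2\simeq E_2[d]$ is impossible (e.g.\ because periodicity would contradict $i(E_2,E_2)=2<\infty$, or because the shift isomorphism would have to be the square-zero class $\epsilon$) — this is the same level of terseness as the paper's own ``by $d>1$ and $k_1\neq 0$,'' but it is a genuine extra case to dismiss.
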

\begin{pf}
By Lemma \ref{sph-elementary} (ii), it is enough to show the case $\Phi={\rm id}_{\D^b(X)}$. 
The direction from (ii) to (i) follows from \cite[Proposition 2.6]{ST} and $T_{E[1]}=T_E$. 

We now consider the converse. 
Suppose that $E_1$ and $E_2$ are distinct. 
When $i(E_1,E_2)=0$, 
we have 
\[
T_{E_1}^{k_1}(E_1)=E_1[k_1(1-d)]\neq E_1=T_{E_2}^{k_2}(E_1)
\]
by $d>1$ and $k_1\neq0$, 
hence $T_{E_1}^{k_1}\neq T_{E_2}^{k_2}$. 

When $i(E_1,E_2)=1$, 
there exists $p_0\in\ZZ$ such that $i(E_1,E_2)=\dHom^{p_0}(E_1,E_2)=1$. 
By $T_{E[1]}=T_E$, we may assume that $p_0=1$. 
Then, by \cite[Proposition 5.1 and Remark 5.3]{Kim} and $d>1$, there exists an object $S\in\D$ satisfying $i(E_1,S)=1$ and $i(E_2,S)=0$.
The inequality (\ref{fund-ineq-k-weak}) in Theorem \ref{fund-ineq-k-weak-thm} implies that 
\begin{eqnarray*}
i(T_{E_1}^{k_1}E_2,S)
&\ge&i(E_1,E_2)i(E_1,S)-i(E_2,S)\\
&=&i(E_1,E_2)\\
&>&0%=i(E_2,S)
=i(T_{E_2}^{k_2}E_2,S). 
\end{eqnarray*}
%We here note that the inequality (\ref{fund-ineq-k-weak}) also holds for the unbounded complex $S$. 
We thus have $T_{E_1}^{k_1}\neq T_{E_2}^{k_2}$. 

When $i(E_1,E_2)\ge2$, there exists $S\in\D^c$ satisfying $i(E_1,S)>i(E_2,S)$ by Lemma \ref{distinct-distinguish}. 
The inequality (\ref{fund-ineq-k-weak}) implies that 
\begin{eqnarray*}
i(T_{E_1}^{k_1}E_2,S)
&\ge&i(E_1,E_2)i(E_1,S)-i(E_2,S)\\
&>&i(E_1,E_2)i(E_2,S)-i(E_2,S)=(i(E_1,E_2)-1)i(E_2,S)\\
&\ge&i(E_2,S)=i(T_{E_2}^{k_2}E_2,S). 
\end{eqnarray*}
Hence we have $T_{E_1}^{k_1}\neq T_{E_2}^{k_2}$. 

Finally, since each spherical twist is of infinite order, we have $k_1=k_2$. 
\qed
\end{pf}
We also obtain that powers of spherical twists are not shifts. 
\begin{prop}
Suppose that $\D^c$ has at least two distinct $d$-spherical objects. 
Then each spherical twist $T_E\in\Aut(\D^c)$ satisfies
$T_E^{k_1}\neq [k_2]$ for all $k_1\in\ZZ\backslash\{0\}$ and $k_2\in\ZZ$. 
\end{prop}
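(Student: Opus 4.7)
The plan is to argue by contradiction. Suppose $T_E^{k_1} = [k_2]$ with $k_1 \ne 0$, and let $E' \in \D^c$ be a $d$-spherical object distinct from $E$ (such an $E'$ exists by hypothesis). Since $[k_2]$ is central in $\Aut(\D^c)$, $T_E^{k_1}$ commutes with $T_{E'}$, and Theorem~\ref{commute-sph} applied with $\Phi = T_{E'}$ and $E_1 = E_2 = E$ forces $T_{E'}(E) \simeq E[l]$ for some $l \in \ZZ$. I split the analysis according to the value of $i(E', E)$.

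If $i(E', E) = 0$, the evaluation $\Hom^\bullet(E, E') \otimes E \to E'$ vanishes, so $T_E(E') = E'$, hence $T_E^{k_1}(E') = E' = E'[k_2]$, forcing $k_2 = 0$. Then $T_E^{k_1} = \mathrm{id}$, but $T_E^{k_1}(E) = E[k_1(1-d)] \ne E$ since $d > 1$ and $k_1 \ne 0$, a contradiction.

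Suppose instead $i(E', E) > 0$. Using $T_{E'}^{-1}(E') = E'[d-1]$ (Lemma~\ref{sph-elementary}(i)), I compute $\dHom^p(T_{E'}(E), E')$ two ways: as $\dHom^{p-l}(E, E')$ from $T_{E'}(E) \simeq E[l]$, and as $\dHom^{p+d-1}(E, E')$ by adjunction. Equating shows that $\dHom^q(E, E')$ is periodic with period $d - 1 + l$. Since this total is finite and positive, $l = 1 - d$. In the defining triangle $\Hom^\bullet(E', E) \otimes E' \to E \xrightarrow{g} E[1-d]$ the morphism $g$ lies in $\Hom^{1-d}(E, E) = 0$ (as $1 - d \notin \{0, d\}$ for $d > 1$), so the triangle splits and yields $\Hom^\bullet(E', E) \otimes E' \simeq E \oplus E[-d]$.

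The final step, which I expect to be the main obstacle, is to extract a contradiction from this splitting. Totalling $\Hom(E, -)$-dimensions and using Serre symmetry $i(E, E') = i(E', E)$ gives $i(E, E')^2 = 4$, so $i(E, E') = 2$; a short convolution argument on the graded dimensions then pins down $\dHom^{q_1}(E', E) = \dHom^{q_1+d}(E', E) = 1$ for some $q_1$, with all other graded pieces zero. Thus we obtain an isomorphism $\phi : E \oplus E[-d] \simeq E'[-q_1] \oplus E'[-q_1 - d]$, and the explicit form of $\Hom^\bullet(E', E)$ (via Serre duality for the first entry) makes the matrix entries $E \to E'[-q_1-d]$ of $\phi$ and $E'[-q_1] \to E[-d]$ of $\phi^{-1}$ both zero. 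Comparing the $(1,1)$-entries of $\phi\phi^{-1} = \mathrm{id}$ and $\phi^{-1}\phi = \mathrm{id}$ (both upper triangular) then forces $\phi_{11} : E \to E'[-q_1]$ to be an isomorphism, giving $E \simeq E'[-q_1]$, which contradicts the distinctness of $E$ and $E'$.
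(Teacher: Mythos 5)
Your proof is correct, but it follows a genuinely different route from the paper's. The paper's own proof is a one-line reduction: rerun the three-case analysis from the proof of Theorem \ref{commute-sph} with $T_{E_2}^{k_2}$ replaced by $[k_2]$, observing that the only property of the right-hand side used there is that it leaves intersection numbers unchanged against the test object ($i(E'[k_2],S)=i(E',S)$, just as $i(T_{E'}^{k_2}E',S)=i(E'[k_2(1-d)],S)=i(E',S)$); the same chain of inequalities $i(T_E^{k_1}E',S)>i(E',S)$ then gives the contradiction directly from Theorem \ref{fund-ineq-k-weak-thm} and Lemma \ref{distinct-distinguish}. You instead use the \emph{statement} of Theorem \ref{commute-sph} (via centrality of the shift) to extract $T_{E'}(E)\simeq E[l]$, and then carry out a structural analysis: periodicity forces $l=1-d$, the vanishing of $\Hom^{1-d}(E,E)$ splits the twist triangle into $\Hom^{\bullet}(E',E)\otimes E'\simeq E\oplus E[-d]$, a dimension/convolution count pins down the graded Homs, and the upper-triangularity of the resulting isomorphism matrix forces $E\simeq E'[-q_1]$, contradicting distinctness. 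I checked each step and it goes through (including the factorization $A(t)\,t^dA(1/t)=(1+t^d)^2$ forcing the two graded pieces to sit in degrees $q_1$ and $q_1+d$, and the vanishing of the two off-diagonal entries). In effect you give a direct proof of the implication (ii)$\Rightarrow$(iii) of Corollary \ref{comm-rel} in this situation, bypassing the ping-pong theorem that the paper's proof of that corollary relies on; the price is length, the gain is the explicit structural conclusion. One cosmetic point: in the case $i(E',E)=0$ you should say a word on why $E'\simeq E'[k_2]$ forces $k_2=0$ (sphericality makes $\Hom^{\bullet}(E',E')$ finite-dimensional and concentrated in degrees $0$ and $d$, so $E'$ cannot be shift-periodic), but this is immediate.
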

\begin{pf}
The claim follows from the same argument of Theorem \ref{commute-sph} and $i(E,M)=i(E[k_2],M)$. 
\qed
\end{pf}
By Theorem \ref{commute-sph} and its proof, we can prove the following. 
\begin{cor}\label{one-to-one-conjugate}
Let $E_1,E_2\in\D^c$ be $d$-spherical objects. 
\begin{enumerate}
\item
$E_1$ and $E_2$ are distinct if and only if $T_{E_1}\neq T_{E_2}$. 
\item
$T_{E_1}^{k_1}$ and $T_{E_2}^{k_2}$ are conjugate in $\Aut(\D^c)$ for some $k_1,k_2\in\ZZ\backslash\{0\}$
if and only if 
$\Phi(E_1)=E_2$ for some $\Phi\in\Aut(\D^c)$. 
\end{enumerate}
\end{cor}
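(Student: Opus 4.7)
The plan is to deduce both parts directly from Theorem~\ref{commute-sph} together with the two standard identities $T_{E[l]}=T_E$ and $\Phi\circ T_E\circ\Phi^{-1}=T_{\Phi(E)}$ recorded in Lemma~\ref{sph-elementary}(ii); there is essentially no new content beyond unpacking a special case of Theorem~\ref{commute-sph} and choosing the right autoequivalence.

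For (i), the backward direction is the trivial observation that $E_1\simeq E_2[l]$ implies $T_{E_1}=T_{E_2[l]}=T_{E_2}$. For the forward direction I would apply Theorem~\ref{commute-sph} with $\Phi=\mathrm{id}_{\D^c}$ and $k_1=k_2=1$: the equality $T_{E_1}=T_{E_2}$ reads $\mathrm{id}\circ T_{E_1}=T_{E_2}\circ\mathrm{id}$, so the theorem yields $E_1=\mathrm{id}(E_1)\simeq E_2[l]$ for some $l\in\ZZ$, contradicting the assumed distinctness.

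For (ii), the backward direction again uses Lemma~\ref{sph-elementary}(ii): if $\Phi(E_1)\simeq E_2$, then $\Phi\circ T_{E_1}\circ\Phi^{-1}=T_{\Phi(E_1)}=T_{E_2}$, and raising to the $k$-th power shows that $T_{E_1}^k$ is conjugate to $T_{E_2}^k$ for any $k\in\ZZ\setminus\{0\}$. For the forward direction, suppose $\Psi\circ T_{E_1}^{k_1}\circ\Psi^{-1}=T_{E_2}^{k_2}$ for some $\Psi\in\Aut(\D^c)$ and $k_1,k_2\in\ZZ\setminus\{0\}$. Rewriting the left-hand side via Lemma~\ref{sph-elementary}(ii) yields $T_{\Psi(E_1)}^{k_1}=T_{E_2}^{k_2}$, and then Theorem~\ref{commute-sph} applied with the identity autoequivalence and the spherical objects $\Psi(E_1), E_2$ produces $\Psi(E_1)\simeq E_2[l]$ for some $l\in\ZZ$ (together with $k_1=k_2$). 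Setting $\Phi:=[-l]\circ\Psi\in\Aut(\D^c)$ then gives $\Phi(E_1)\simeq E_2$, as required.

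Since every step above is a mechanical consequence of the two cited results, I do not anticipate any genuine obstacle. The only point worth being careful about is in part (ii): Theorem~\ref{commute-sph} only recovers the spherical object up to a shift, so one must absorb the resulting $[l]$ into the autoequivalence to match the equality-on-the-nose formulation of the corollary.
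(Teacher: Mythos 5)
Your proposal is correct and follows exactly the route the paper intends: the corollary is stated as an immediate consequence of Theorem \ref{commute-sph} together with the identities $T_{E[l]}=T_E$ and $\Phi\circ T_E\circ\Phi^{-1}=T_{\Phi(E)}$ from Lemma \ref{sph-elementary}(ii), and your unpacking (including absorbing the shift $[l]$ into the conjugating autoequivalence in part (ii)) is precisely the argument the authors leave to the reader.
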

%%%%%%%%%%%%%%%%%%%%%%%%%%%%%%%%%%%%%%%%%%%%%%%%%%%%%%%%%%

\section{Subgroups generated by two spherical twists}
We consider the analogue of Theorem \ref{free-Dehn-twist}
and relate the intersection number to presentations of subgroups generated by two spherical twists. 

Throughout this section, we suppose $d>1$ and 
$\D:=\D(\A)$ is a derived category of a proper dg-category $\A$ such that
$\D^c:=\D(\A)^c$ has a Serre functor ${\mathcal S}$. 
%as in Section 2. 

%Throughout this subsection, $X$ is a $d$-dimensional smooth projective variety over a field $K$ of dimension $d>1$. 
%d>1 is needed. 

%---------------------------------------------------------
%\subsection{Classification via the intersection number}

\begin{lem}[Ping-pong lemma]
Let $G$ be a group acting on a set $W$, and $g_1, g_2$ elements of G. 
Suppose that there are non-empty, disjoint subsets $W_1, W_2$ of $W$ with the property that, for each $i,j(i\neq j)$, we have $g_i^l(W_j)\subset W_i$ for every nonzero integer $l$. 
Then the subgroup $\langle g_1, g_2 \rangle$ generated by $g_1$ and $g_2$ is isomorphic to $F_2$
\end{lem}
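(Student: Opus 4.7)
The plan is to show that the natural surjection $F_2 \twoheadrightarrow \langle g_1, g_2 \rangle$ sending the free generators to $g_1, g_2$ is injective. Concretely, every nontrivial reduced word $w = g_{i_k}^{m_k} \cdots g_{i_1}^{m_1}$ (with all $m_s \neq 0$ and $i_s \neq i_{s+1}$, where each index lies in $\{1,2\}$) must represent a non-identity element of $G$. I will certify this dynamically: exhibit $x \in W$ with $w(x) \neq x$.

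The preparatory step is an induction on $k$ establishing the following: if $x \in W_j$ with $j \neq i_1$, then $w(x) \in W_{i_k}$. The base case $k = 1$ is exactly the hypothesis $g_{i_1}^{m_1}(W_j) \subset W_{i_1}$. For the inductive step, the partial product $g_{i_{k-1}}^{m_{k-1}} \cdots g_{i_1}^{m_1}$ sends $x$ into $W_{i_{k-1}}$ by induction, and then $g_{i_k}^{m_k}$ carries $W_{i_{k-1}}$ into $W_{i_k}$ since $i_k \neq i_{k-1}$.

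Now two cases arise. If $i_k = i_1$ (outermost and innermost letters use the same generator), pick any $x$ in the other set $W_j$, $j \neq i_1$, which is non-empty by hypothesis. The induction gives $w(x) \in W_{i_k} = W_{i_1}$, which is disjoint from $W_j \ni x$, so $w(x) \neq x$ and thus $w \neq e_G$.

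If instead $i_1 \neq i_k$, I would reduce to the previous case by conjugation. Set
$w' := g_{i_1}^{-m_1} \, w \, g_{i_1}^{m_1} = g_{i_1}^{-m_1} g_{i_k}^{m_k} \cdots g_{i_2}^{m_2} g_{i_1}^{2 m_1}$.
The conditions $i_1 \neq i_k$ and $i_1 \neq i_2$ (the latter coming from $w$ being reduced) ensure no adjacent letters of $w'$ collapse, so $w'$ is a bona fide reduced word starting and ending with powers of $g_{i_1}$. The previous case therefore yields $w' \neq e_G$, and since $w' = e_G$ if and only if $w = e_G$, we conclude $w \neq e_G$. The main bookkeeping hurdle is this conjugation step, specifically verifying that $w'$ remains reduced so that the earlier case applies without modification.
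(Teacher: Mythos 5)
Your proof is correct. The paper itself states the ping-pong lemma without proof, citing it as a classical fact, so there is no in-paper argument to compare against; your write-up is the standard textbook proof (trivialize the kernel of $F_2 \twoheadrightarrow \langle g_1,g_2\rangle$ by tracking a point of $W_j$, $j\neq i_1$, through the syllables, then handle words with mismatched outer and inner indices by conjugating by the innermost syllable). The one step that needs care --- that the conjugated word $w' = g_{i_1}^{-m_1} g_{i_k}^{m_k}\cdots g_{i_2}^{m_2} g_{i_1}^{2m_1}$ is still reduced and begins and ends with the same generator --- is exactly the step you verify, using $i_k\neq i_1$ and $i_2\neq i_1$, so the argument is complete.
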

Using the ping-pong lemma, we prove the main result in this section. 
%the complete analogue of Theorem \ref{free-Dehn-twist}. 
\begin{thm}\label{free-sph-twist}
Let $E_1,E_2\in\D^c$ be distinct $d$-spherical objects.
If $i(E_1,E_2)\ge2$, then $\langle T_{E_1}^{k_1},T_{E_2}^{k_2} \rangle\simeq F_2$ for any $k_1,k_2\in\ZZ\backslash\{0\}$. 
\end{thm}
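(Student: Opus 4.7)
The plan is to apply the ping-pong lemma to the action of $\langle T_{E_1}^{k_1}, T_{E_2}^{k_2}\rangle \subset \Aut(\D^c)$ on the set of isomorphism classes of objects of $\D^c$, using as attracting subsets
\[
    W_i := \{\, M \in \D^c \ :\ i(E_j, M) > i(E_i, M)\,\}, \qquad \{i,j\} = \{1,2\}.
\]
These are obviously disjoint, and each is nonempty: Lemma \ref{distinct-distinguish} produces an $S \in \D^c$ with $i(E_1,S) > i(E_2,S)$ (giving $S \in W_2$), and applying the same lemma with the roles of $E_1$ and $E_2$ interchanged supplies an element of $W_1$.

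The crucial step is to verify, for every $l \in \ZZ \backslash \{0\}$, that $T_{E_1}^l(W_2) \subset W_1$ (and symmetrically $T_{E_2}^l(W_1) \subset W_2$). Fix $M \in W_2$. On the one hand, since $T_{E_1}$ is an autoequivalence and $T_{E_1}(E_1) = E_1[1-d]$ by Lemma \ref{sph-elementary}(i), shift-invariance of the intersection number yields
\[
    i(E_1, T_{E_1}^l M) = i(T_{E_1}^{-l} E_1, M) = i(E_1[l(d-1)], M) = i(E_1, M).
\]
On the other hand, applying Theorem \ref{fund-ineq-k-weak-thm} to the triple $(E_1, M, E_2)$ and using the Serre-duality symmetry $i(-,E_2) = i(E_2,-)$, one obtains
\[
    i(E_2, T_{E_1}^l M) \ \ge\ i(E_1,E_2)\, i(E_1,M)\ -\ i(E_2,M)\ \ge\ 2\, i(E_1,M) - i(E_2,M).
\]
The defining inequality $i(E_1,M) > i(E_2,M)$ of $W_2$ then gives $2\, i(E_1,M) - i(E_2,M) > i(E_1,M)$, which is precisely $i(E_2, T_{E_1}^l M) > i(E_1, T_{E_1}^l M)$, i.e.\ $T_{E_1}^l M \in W_1$. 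The argument for $T_{E_2}^l$ is identical.

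With this invariance in hand for every nonzero power of $T_{E_i}$, the ping-pong hypothesis for $g_i = T_{E_i}^{k_i}$ follows at once, since $g_i^n = T_{E_i}^{k_i n}$ is a nonzero power of $T_{E_i}$ whenever $n \neq 0$. I do not anticipate a genuine obstacle: the heavy lifting is done by the fundamental inequality of Theorem \ref{fund-ineq-k-weak-thm}, and what remains is the elementary numerical implication $a > b \geq 0 \Rightarrow 2a - b > a$, together with the standard shift- and equivalence-invariance of $i(E_1, -)$ under $T_{E_1}$. The only bookkeeping point is to confirm that all intersection numbers stay finite so that Theorem \ref{fund-ineq-k-weak-thm} applies, which follows from the properness of $\A$ and the fact that $T_{E_i}$ preserves $\D^c$.
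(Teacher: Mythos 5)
Your proposal is correct and follows essentially the same route as the paper: the same ping-pong sets $W_1,W_2$, non-emptiness via Lemma \ref{distinct-distinguish}, the invariance $i(E_1,T_{E_1}^{l}M)=i(E_1,M)$ from Lemma \ref{sph-elementary}(i), and the key estimate from Theorem \ref{fund-ineq-k-weak-thm} combined with $i(E_1,E_2)\ge 2$. The only (welcome) addition is your explicit remark on finiteness of the intersection numbers, which the paper leaves implicit.
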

\begin{pf}
To apply the ping-pong lemma, 
we define the subsets $W_1, W_2$ of the set of isomorphism classes of objects in $\D^c$ as follows: 
\begin{eqnarray*}
W_1&:=&\{[S_1]~|~S_1\in\D^c\text{ such that }i(S_1,E_2)>i(S_1,E_1)\}\\
W_2&:=&\{[S_2]~|~S_2\in\D^c\text{ such that }i(S_2,E_1)>i(S_2,E_2)\}. 
\end{eqnarray*}
These are obviously disjoint, and non-empty by Lemma \ref{distinct-distinguish}. 
By the ping-pong lemma, it suffices to check that 
$T_{E_1}^{k_1l}(W_2)\subset W_1$ and $T_{E_2}^{k_2l}(W_1)\subset W_2$ for each $l\in\ZZ\backslash\{0\}$. 
We only show the former inclusion. 

For each $S\in W_2$ and $l\in\ZZ\backslash\{0\}$, 
the inequality (\ref{fund-ineq-k-weak}) in Theorem \ref{fund-ineq-k-weak-thm} gives
\begin{eqnarray*}
i(T_{E_1}^{k_1l}(S),E_2)
&\ge&i(E_1,S)i(E_1,E_2)-i(S,E_2)\\
&\ge&2i(E_1,S)-i(S,E_2)\\
&>&2i(E_1,S)-i(S,E_1)\\
&=&i(E_1,S)=i(E_1,T_{E_1}^{k_1l}(S))=i(T_{E_1}^{k_1l}(S),E_1). 
\end{eqnarray*}
Thus $T_{E_1}^{k_1l}(S)\in W_1$ as desired. 
\qed
\end{pf}
As a corollary, we reveal the relationship between the intersection number and presentations of subgroups generated by two spherical twists. 
\begin{cor}\label{braid-rel}
Let $E_1,E_2\in\D^c$ be distinct $d$-spherical objects.
%i.e. for any l\in\ZZ, E_1 is not isomorphic to E_2. 
Then the following are equivalent: 
\begin{enumerate}
\item
$\langle T_{E_1},T_{E_2} \rangle\simeq B_3$, where $B_3$ is the braid group on 3 strands. 
%\item
%$T_{E_1}\circ T_{E_2}\circ T_{E_1}=T_{E_2}\circ %T_{E_1}\circ T_{E_2}$~(braid relation)
\item
$T_{E_1}T_{E_2}(E_1)=E_2[l]$ for some $l\in\ZZ$
\item
$i(E_1,E_2)=1$.
\end{enumerate}
\end{cor}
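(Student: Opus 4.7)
The plan is to go around the cycle (i) $\Rightarrow$ (ii) $\Rightarrow$ (iii) $\Rightarrow$ (i), combining elementary group-theoretic rearrangement with the inequality already established and one faithfulness input. For (i) $\Rightarrow$ (ii), the defining braid relation of $B_3$ forces $T_{E_1} T_{E_2} T_{E_1} = T_{E_2} T_{E_1} T_{E_2}$, which I would rewrite as $(T_{E_1} T_{E_2}) T_{E_1} (T_{E_1} T_{E_2})^{-1} = T_{E_2}$. By Lemma \ref{sph-elementary}(ii) the left-hand side equals $T_{(T_{E_1} T_{E_2})(E_1)}$, so $T_{(T_{E_1} T_{E_2})(E_1)} = T_{E_2}$, and Corollary \ref{one-to-one-conjugate}(i) then forces $(T_{E_1} T_{E_2})(E_1) \simeq E_2[l]$ for some $l \in \ZZ$.

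For (ii) $\Rightarrow$ (iii), running the same rearrangement in reverse shows that (ii) implies the braid relation, and I would rule out $i(E_1, E_2) \neq 1$ in two cases. If $i(E_1, E_2) \geq 2$, Theorem \ref{free-sph-twist} gives $\langle T_{E_1}, T_{E_2}\rangle \simeq F_2$, but the braid word $T_{E_1}T_{E_2}T_{E_1}(T_{E_2}T_{E_1}T_{E_2})^{-1}$ is manifestly non-trivial in $F_2$, a contradiction. If $i(E_1, E_2) = 0$, the triangle \eqref{sph-tri} for $T_{E_2}(E_1)$ has vanishing first term, so $T_{E_2}(E_1) \simeq E_1$ and hence $T_{E_1} T_{E_2}(E_1) \simeq E_1[1-d]$; matching this with $E_2[l]$ would force $E_1 \simeq E_2[l+d-1]$, contradicting the distinctness of $E_1$ and $E_2$.

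For (iii) $\Rightarrow$ (i), after replacing $E_2$ by a shift so that $\Hom^0(E_1, E_2) \simeq K$ (the remaining graded Hom groups vanish by Serre duality and $i(E_1,E_2)=1$), the pair $\{E_1, E_2\}$ forms an $A_2$-configuration of $d$-spherical objects. A direct check at the level of the triangles \eqref{sph-tri}, as in \cite[Proposition 2.13]{ST}, establishes the braid relation and yields a surjection $B_3 \twoheadrightarrow \langle T_{E_1}, T_{E_2}\rangle$; invoking faithfulness of the $B_3$-action attached to an $A_2$-configuration (e.g., \cite[Theorem 2.17]{ST}, or via the paper's own description of the autoequivalence group of the graded dual numbers mentioned in the introduction) upgrades this surjection to an isomorphism. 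This last upgrade is the main obstacle of the argument: the other two implications are formal consequences of Lemma \ref{sph-elementary}, Corollary \ref{one-to-one-conjugate}(i), and Theorem \ref{free-sph-twist}, whereas faithfulness for $B_3$ is genuinely new input and has historically been the delicate point; a fully self-contained argument would require re-proving Seidel--Thomas faithfulness in the present dg-categorical generality, either via an explicit model calculation or via a ping-pong argument on a suitable finite-index subgroup of $B_3$.
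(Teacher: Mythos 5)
Your proposal is correct and follows the same overall route as the paper: the cycle (i)$\Rightarrow$(ii)$\Rightarrow$(iii)$\Rightarrow$(i), with (i)$\Rightarrow$(ii) obtained from the conjugation formula of Lemma \ref{sph-elementary}(ii) together with the injectivity of $E\mapsto T_E$ (the paper invokes Theorem \ref{commute-sph} directly, you invoke its Corollary \ref{one-to-one-conjugate}(i) --- the same content), and (iii)$\Rightarrow$(i) outsourced, exactly as in the paper, to Seidel--Thomas and Nordskova--Volkov; your explicit flagging of faithfulness of the $B_3$-action as the genuinely hard input matches the paper's reliance on \cite[Theorem 2.17]{ST} and \cite[Theorem 1]{NV}. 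The one place you diverge is (ii)$\Rightarrow$(iii): the paper first uses the inequality (\ref{fund-ineq-k-weak}) to deduce $\left|i(E_1,E_2)-i(E_1,E_2)^2\right|\le 2$, hence $i(E_1,E_2)\le 2$, and then rules out the values $0$ and $2$; you skip the inequality entirely, excluding $i(E_1,E_2)\ge 2$ by Theorem \ref{free-sph-twist} just as the paper does, but excluding $i(E_1,E_2)=0$ by computing $T_{E_1}T_{E_2}(E_1)\simeq E_1[1-d]$ from the triangle (\ref{sph-tri}) and contradicting distinctness, where the paper instead derives $T_{E_1}=T_{E_2}$ from the braid and commutation relations. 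Both exclusions are valid, and your organization of this step is marginally more economical, since the preliminary bound $i(E_1,E_2)\le 2$ becomes redundant once the two endpoints $i=0$ and $i\ge 2$ are each handled directly.
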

\begin{pf}
The assertions ${\rm (iii)}\Rightarrow{\rm (ii)}\Rightarrow{\rm (i)}$ are shown by Seidel--Thomas (\cite[Proposition 2.13 and Theorem 2.17]{ST}) and Nordskova--Volkov (\cite[Theorem 1]{NV}) for more general  $d$-spherical objects, 
see also \cite[Proposition 8.22]{HuyFM}. 
The braid relation gives
$
(T_{E_1}T_{E_2})T_{E_1}(T_{E_1}T_{E_2})^{-1}=T_{E_2}. 
$
Theorem \ref{commute-sph} then implies $T_{E_1}T_{E_2}(E_1)=E_2[l]$ for some $l\in\ZZ$. 

The inequality
\[
\left|i(T_{E_1}E_2,E_2)-i(E_1,E_2)^2\right|\le i(E_2,E_2)=2.
\]
follows from the inequality (\ref{fund-ineq-k-weak}) and easy computations. 
Applying $T_{E_1}T_{E_2}(E_1)=E_2[l]$, 
we have 
\[
i(T_{E_1}E_2,E_2)=i(E_2,T_{E_1}^{-1}E_2)=i(E_2,T_{E_2}^{-1}T_{E_1}^{-1}E_2)=i(E_2,E_1),
\]
hence this inequality holds only in the case of $i(E_1,E_2)=0,1$ or $2$. 
When $i(E_1,E_2)=0$, we have $T_{E_1}=T_{E_2}$ by the braid relation and the commutative relation: $T_{E_1}\circ T_{E_2}=T_{E_2}\circ T_{E_1}$, which contradicts the distinctness.
Assume that $i(E_1,E_2)=2$. 
Then the subgroup $\langle T_{E_1}, T_{E_2}\rangle$ is isomorphic to the rank 2 free group by Theorem \ref{free-sph-twist}, 
which contradicts the braid relation. 
We therefore have $i(E_1,E_2)=1$. 
\qed
\end{pf}
\begin{cor}\label{comm-rel}
Let $E_1,E_2\in\D^c$ be distinct spherical objects.
%i.e. for any l\in\ZZ, E_1 is not isomorphic to E_2. 
Then the following are equivalent: 
\begin{enumerate}
\item
$\langle T_{E_1},T_{E_2} \rangle\simeq \ZZ\times\ZZ$
%\item
%$T_{E_1}\circ T_{E_2}=T_{E_2}\circ T_{E_1}$
\item
$T_{E_1}(E_2)=E_2[l]$ for some $l\in\ZZ$
\item
$i(E_1,E_2)=0$. 
\end{enumerate}
\end{cor}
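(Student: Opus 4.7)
My plan is to prove the cycle of implications (iii)$\Rightarrow$(ii)$\Rightarrow$(i)$\Rightarrow$(iii), as each step reduces cleanly to results already established.

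For (iii)$\Rightarrow$(ii), I note that $i(E_1,E_2)=0$ forces $\Hom^{\bullet}(E_1,E_2)=0$, so the defining spherical triangle \eqref{sph-tri} collapses to an isomorphism $E_2\iso T_{E_1}(E_2)$, and (ii) holds with $l=0$.

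For (ii)$\Rightarrow$(i), I first combine Lemma \ref{sph-elementary}(ii) with the shift-invariance $T_{E[1]}=T_E$ to obtain
\[
T_{E_1}T_{E_2}T_{E_1}^{-1}=T_{T_{E_1}(E_2)}=T_{E_2[l]}=T_{E_2},
\]
so the two twists commute and $\langle T_{E_1},T_{E_2}\rangle$ is an abelian quotient of $\ZZ\oplus\ZZ$. To upgrade this to an isomorphism I rule out any nontrivial relation $T_{E_1}^{k_1}=T_{E_2}^{-k_2}$: if both exponents are nonzero, Theorem \ref{commute-sph} applied with $\Phi=\mathrm{id}$ would force $E_1\simeq E_2[l']$ for some $l'$, contradicting distinctness; if exactly one exponent is nonzero, a nontrivial power of a single spherical twist would equal the identity, contradicting the infinite order of spherical twists in the $d>1$ regime recorded after Lemma \ref{sph-elementary}.

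For (i)$\Rightarrow$(iii), the hypothesis makes $\langle T_{E_1},T_{E_2}\rangle$ abelian, so I exclude the remaining values of the intersection number: if $i(E_1,E_2)\ge 2$, Theorem \ref{free-sph-twist} identifies the subgroup with the non-abelian free group $F_2$; if $i(E_1,E_2)=1$, Corollary \ref{braid-rel} identifies it with the non-abelian braid group $B_3$. Both outcomes contradict commutativity, leaving $i(E_1,E_2)=0$. The only modest obstacle is the step (ii)$\Rightarrow$(i), where establishing commutativity is immediate but separating $\ZZ\oplus\ZZ$ from $\ZZ$ or a cyclic quotient requires the joint use of distinctness and infinite order; the other two implications follow essentially at once from the spherical triangle and the previously proved freeness and braid-group corollaries.
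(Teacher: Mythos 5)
Your proof is correct, and it closes the cycle by a slightly different route than the paper. The paper proves (i)$\Rightarrow$(ii) via Theorem \ref{commute-sph} (applied with $\Phi=T_{E_1}$) and then (ii)$\Rightarrow$(iii) by running the fundamental inequality \eqref{fund-ineq-k-weak}: from $T_{E_1}(E_2)=E_2[l]$ it gets $i(T_{E_1}E_2,E_2)=2$, hence $|2-i(E_1,E_2)^2|\le i(E_2,E_2)=2$ and so $i(E_1,E_2)\in\{0,1,2\}$, after which the cases $1$ and $2$ are excluded using the braid relation together with commutativity, and Theorem \ref{free-sph-twist}. You instead prove (i)$\Rightarrow$(iii) directly: abelianness rules out $i\ge2$ by Theorem \ref{free-sph-twist} and rules out $i=1$ because Corollary \ref{braid-rel} (iii)$\Rightarrow$(i) would make the group the non-abelian $B_3$; since $E_1,E_2$ are compact and $\A$ is proper the intersection number is finite, so these cases are exhaustive and $i(E_1,E_2)=0$. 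Your route skips the explicit inequality computation (it is of course still hidden inside Theorem \ref{free-sph-twist}), while the paper's route derives (iii) from the a priori weaker hypothesis (ii) alone, at the cost of first re-establishing commutativity. You are also more careful than the paper on (ii)$\Rightarrow$(i): the paper only remarks that Lemma \ref{sph-elementary}(ii) yields the commutative relation, whereas you additionally verify that the resulting abelian group is genuinely free of rank $2$, ruling out a relation $T_{E_1}^{a}=T_{E_2}^{-b}$ with both exponents nonzero via Theorem \ref{commute-sph} (which would force $E_1\simeq E_2[l']$, contradicting distinctness) and with one exponent zero via the infinite order of spherical twists for $d>1$. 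That extra step is needed for a fully rigorous proof that the group is $\ZZ\times\ZZ$ rather than a proper quotient, and is a genuine improvement over the paper's terse treatment.
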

\begin{pf}
The assertions ${\rm (iii)}\Rightarrow{\rm (ii)}\Rightarrow{\rm (i)}$ hold
since Lemma \ref{sph-elementary} (ii) implies the commutative relation. 
Clearly, (i) implies (ii) by Theorem \ref{commute-sph}. 
%Lemma \ref{sph-elementary} (ii) and Proposition \ref{obj-twist} then imply $T_{E_1}(E_2)=E_2[l]$ for some $l\in\ZZ$. 

The inequality
\[
\left|i(T_{E_1}E_2,E_2)-i(E_1,E_2)^2\right|\le i(E_2,E_2)=2.
\]
follows from the inequality (\ref{fund-ineq-k-weak}) and easy computations. 
Applying $T_{E_1}(E_2)=E_2[l]$, 
we have 
\[
i(T_{E_1}E_2,E_2)=i(E_2[l], E_2)=2,
\]
hence this inequality holds only in the case of $i(E_1,E_2)=0,1$ or $2$.

When $i(E_1,E_2)=1$, we have $T_{E_1}=T_{E_2}$ by the braid relation (Corollary \ref{braid-rel}) and the commutative relation, which contradicts the distinctness.
Assume that $i(E_1,E_2)=2$. 
Then the subgroup $\langle T_{E_1}, T_{E_2}\rangle$ is isomorphic to the rank 2 free group by Theorem \ref{free-sph-twist}, 
which contradicts the commutative relation. 
We therefore have $i(E_1,E_2)=0$. 
\qed
\end{pf}
\begin{cor}\label{free-rel}
Let $E_1,E_2\in\D^c$ be distinct spherical objects.
%i.e. for any l\in\ZZ, E_1 is not isomorphic to E_2. 
Then the following are equivalent: 
\begin{enumerate}
\item
$\langle T_{E_1},T_{E_2} \rangle\simeq F_2$
\item
$i(E_1,E_2)\ge2$. 
\end{enumerate}
\end{cor}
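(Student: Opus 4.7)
The proof is essentially a combinatorial bookkeeping exercise: the previous two corollaries have already pinned down exactly what $\langle T_{E_1}, T_{E_2}\rangle$ is when $i(E_1,E_2)\in\{0,1\}$, and Theorem \ref{free-sph-twist} handles the remaining case.

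First I would dispatch the direction (ii) $\Rightarrow$ (i). This is immediate from Theorem \ref{free-sph-twist} applied with $k_1 = k_2 = 1$: distinctness of $E_1, E_2$ together with $i(E_1,E_2)\ge 2$ gives $\langle T_{E_1}, T_{E_2}\rangle \simeq F_2$ directly.

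For (i) $\Rightarrow$ (ii) I would argue by contrapositive. Assume $i(E_1,E_2)\le 1$ and show that $\langle T_{E_1},T_{E_2}\rangle \not\simeq F_2$. There are two cases, each of which is settled by the preceding corollaries combined with an elementary group-theoretic obstruction:
\begin{itemize}
    \item If $i(E_1,E_2) = 0$, then by Corollary \ref{comm-rel} we have $\langle T_{E_1},T_{E_2}\rangle\simeq \ZZ\times\ZZ$. Since this group is abelian but $F_2$ is not, we conclude $\langle T_{E_1},T_{E_2}\rangle \not\simeq F_2$.
    \item If $i(E_1,E_2) = 1$, then by Corollary \ref{braid-rel} we have $\langle T_{E_1},T_{E_2}\rangle\simeq B_3$. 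Since $B_3$ has nontrivial center (generated by $(T_{E_1}T_{E_2})^3$) while the center of $F_2$ is trivial, we again conclude $\langle T_{E_1},T_{E_2}\rangle \not\simeq F_2$.
\end{itemize}
Taking the contrapositive, (i) forces $i(E_1,E_2)\ge 2$.

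There is no real obstacle here: the heavy lifting has already been done to prove Theorem \ref{free-sph-twist} (via the ping-pong lemma applied to the sets $W_1, W_2$ built from the intersection number) and to establish the exact structure of the subgroup in the low-intersection cases. The only point requiring care is to cite the appropriate group-theoretic fact to distinguish $F_2$ from $\ZZ\times\ZZ$ and from $B_3$, for which abelianness and the nontriviality of the center suffice.
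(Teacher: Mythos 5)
Your proposal is correct and follows essentially the same route as the paper: the direction (ii) $\Rightarrow$ (i) is Theorem \ref{free-sph-twist}, and the converse is deduced from Corollaries \ref{braid-rel} and \ref{comm-rel}. The paper leaves implicit the group-theoretic facts distinguishing $F_2$ from $\ZZ\times\ZZ$ and $B_3$, which you correctly supply (non-abelianness and triviality of the center, respectively).
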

\begin{pf}
The assertion ${\rm (ii)}\Rightarrow{\rm (i)}$ follows from Theorem \ref{free-sph-twist}. 
The converse is given by Corollary \ref{braid-rel} and Corollary \ref{comm-rel}. 
\qed
\end{pf}

%---------------------------------------------------------
%\subsection{ADE-configurations}

%%%%%%%%%%%%%%%%%%%%%%%%%%%%%%%%%%%%%%%%%%%%%%%%%%%%%%%%%%
\section{Proof of Theorem \ref{fund-ineq-k-weak-thm}}\label{section-pf-inequality}
In this section, we prove Theorem \ref{fund-ineq-k-weak-thm}. % in the more general setting of dg-categories.

Throughout this section, 
we consider the case of $d\neq0$, and 
let $\A$ be a proper dg-category such that
$\D(\A)^c$ has a Serre functor ${\mathcal S}$.

%For our purposes, we restrict our attention to $d$-spherical objects with $d \neq 0$.

%---------------------------------------------------------
\subsection{Good representatives}
The following Proposition \ref{good-rep-sph} is one of the technical steps towards the proof of Theorem \ref{fund-ineq-k-weak-thm} and it is an analogue of \cite[Lemma 3.1]{Kea}.
The difference is that in \cite{Kea} Keating can deform the category so that for a Lagrangian sphere $L$ she has $\Hom(L, L)=K[\epsilon]/\epsilon^2,\ \mathrm{deg}(\epsilon)=d$, $d(\epsilon) = 0$, on the nose. 
We cannot achieve this because we are working in the more strict formalism of dg-categories, but the following proposition will be enough for our purposes.

To simplify the notation, we will write $A_d:=K[\epsilon]/\epsilon^2$ for the dg-algebra of graded dual numbers with $\deg(\epsilon)=d$ and zero differential.

The rationale behind Proposition \ref{good-rep-sph} is that, given a $d$-spherical object $E$, as $d \neq 0$ there exists a unique structure of graded algebra on
\[
     \bigoplus_{n \in \mathbb{Z}} \mathrm{Hom}_{\D(\A)}(E,E[n])[-n].
\]
Namely, we have
\[
     \bigoplus_{n \in \mathbb{Z}} \mathrm{Hom}_{\D(\A)}(E,E[n])[-n] \simeq A_d.
\]
This implies, as $A_d$ is intrinsically formal, see e.g. \cite[Proposition 2.2]{KS-2022}, that the dg-endomorphism-algebra of an h-projective resolution of $E$ is quasi-isomorphic to $A_d$.
What we prove in Proposition \ref{good-rep-sph} is that we can choose the h-projective resolution of $E$ so that it carries and $A_d$-dg-module structure.

\begin{prop}\label{good-rep-sph}
Let $E\in\D(\A)^c$ be a d-spherical object. 
Then, there exists $F\in\P(\A)^c$ with the following properties:
\begin{enumerate}
\item
$F \simeq E$ in $\D(\A)$
\item
there exists $\alpha_{\epsilon} \in\Hom^d_{\P(\A)}(F,F)$ such that $\alpha_{\epsilon} ^2=0$
\item
under the isomorphism of (i), $\alpha_{\epsilon}$ corresponds to the canonical extension $E[-d] \to E$. 
\end{enumerate}
\end{prop}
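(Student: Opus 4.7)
The plan is to exploit the intrinsic formality of $A_d$ to equip an h-projective resolution of $E$ with a strict $A_d$-dg-module structure, after which the action of $\epsilon$ provides the required endomorphism $\alpha_\epsilon$.

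First, I pick any h-projective resolution $\widetilde F \in \P(\A)^c$ of $E$ and examine its dg-endomorphism algebra $B := \Hom_{\A}(\widetilde F, \widetilde F)$. By the $d$-spherical hypothesis, the cohomology $H^*(B) \simeq \bigoplus_n \Hom_{\D(\A)}(E, E[n])[-n]$ is concentrated in degrees $0$ and $d$, each of dimension one. Because any product of two classes of degree $d$ lives in degree $2d$, and $d \neq 0$ forces $H^{2d}(B) = 0$, the graded algebra $H^*(B)$ is necessarily isomorphic to $A_d$.

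Next, I invoke the intrinsic formality of $A_d$ (\cite[Proposition 2.2]{KS-2022}): since $B$ is a dg-algebra with cohomology $A_d$, there exists a dg-algebra quasi-isomorphism $\phi \colon A_d \to B$, possibly after replacing $\widetilde F$ with an h-projective resolution obtained via pullback along a zig-zag realising the formality. Concretely, if the formality statement only provides a zig-zag between $A_d$ and $B$ through an intermediate dg-algebra $B'$, I pass to an h-projective compact resolution $F \in \P(\A)^c$ of $E$ whose dg-endomorphism algebra is quasi-isomorphic to $B'$ and which admits a direct dg-algebra map $\phi \colon A_d \to \Hom_{\A}(F, F)$.

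With $\phi$ in hand, I set $\alpha_\epsilon := \phi(\epsilon) \in \Hom^d_{\P(\A)}(F, F)$. Since $\epsilon$ is a closed element of $A_d$ and $\phi$ is a morphism of dg-algebras, $\alpha_\epsilon$ is closed of degree $d$ and
\[
\alpha_\epsilon^2 = \phi(\epsilon)\phi(\epsilon) = \phi(\epsilon^2) = \phi(0) = 0
\]
strictly, settling (ii). For (iii), the class $[\alpha_\epsilon] \in H^d(\Hom_{\A}(F, F)) \simeq \Hom^d_{\D(\A)}(E, E)$ is nonzero because $\phi$ is a quasi-isomorphism sending the generator $[\epsilon] \in H^d(A_d)$ to a generator of the target; since $\Hom^d_{\D(\A)}(E, E)$ is one-dimensional, rescaling $\phi$ by a scalar arranges that $[\alpha_\epsilon]$ coincides with the canonical extension class.

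The main obstacle is producing a genuine dg-algebra map $A_d \to \Hom_{\A}(F, F)$ rather than only an $A_\infty$-quasi-isomorphism or an abstract zig-zag between $A_d$ and $B$. This step requires the full strength of the intrinsic formality of $A_d$, which is tractable because $A_d$ is finite-dimensional and its minimal $A_\infty$-model has vanishing higher products. The flexibility of choosing among quasi-isomorphic h-projective models for $E$ allows one to transport the abstract structure into an honest dg-algebra map, so that $\alpha_\epsilon$ can be defined with strictly vanishing square.
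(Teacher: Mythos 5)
Your route is genuinely different from the paper's. The paper never tries to produce a dg\nobreakdash-algebra map $A_d \to \Hom_{\A}(F,F)$ abstractly; instead it builds $F$ by hand: it lifts the infinite sequence $\dots \to E[-2d] \to E[-d] \to E$ (with maps the canonical extension $\varepsilon$) to a one-sided twisted complex of copies of an h-projective resolution $E'$, killing the obstructions degree by degree using the fact that $\Hom_{\A}(E',E')$ has cohomology only in degrees $0$ and $d$, then convolves to get $E''$, constructs a closed degree-$(1-d)$ ``shift'' endomorphism $p$ of $E''$, and takes $F$ to be the dg-cone of $p[-1]$. On that explicit model $F = E'' \oplus E''[-d]$ the map $\alpha_\epsilon(a,b)=(0,a)$ visibly satisfies $\alpha_\epsilon^2=0$. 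Your approach — transport a strict $A_d$-action onto a model of $E$ via intrinsic formality — is cleaner conceptually and, if completed, would buy you the statement with less bookkeeping; the paper's approach buys an explicit resolution that is then reused (e.g.\ the twisted-complex description of $T_E^n N$ and of $C_n(E,N)$ is read off from the same data).

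However, as written there is a gap at exactly the step you yourself identify as ``the main obstacle.'' Intrinsic formality gives you a zig-zag of dg-algebra quasi-isomorphisms $A_d \xleftarrow{\ \sim\ } C \xrightarrow{\ \sim\ } B = \Hom_{\A}(\widetilde F,\widetilde F)$ (or an $A_\infty$-quasi-isomorphism), and since $A_d = K[\epsilon]/\epsilon^2$ is not cofibrant as a dg-algebra you cannot simply invert the left leg to obtain a direct map $A_d \to B$. The sentence ``the flexibility of choosing among quasi-isomorphic h-projective models for $E$ allows one to transport the abstract structure into an honest dg-algebra map'' asserts the conclusion rather than proving it — and this transport is the entire mathematical content of the proposition (it is what the paper's Lemmas on the twisted-complex lift, the morphism $p$, and the dg-cone accomplish). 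To close the gap you would need an actual construction, for instance: the right leg makes $\widetilde F$ a strict left $C$-module, and the two-sided bar construction $A_d \otimes_K T(C[1]) \otimes_K \widetilde F$ computing $A_d \otimes^{\LL}_{C} \widetilde F$ is a strict $A_d$-$\A$-bimodule quasi-isomorphic to $\widetilde F$ over $\A$; one must then check that this model is $\A$-h-projective (it is a convolution of a one-sided twisted complex of h-projectives concentrated in non-positive degrees, as in the paper's h-projectivity argument for $E''$) and compact. With that supplied, your points (i)--(iii) go through as you describe, including the rescaling to hit the canonical extension class in the one-dimensional space $\Hom^d_{\D(\A)}(E,E)$.
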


The proof of the above proposition is quite long and technical, for this reason we split it into various parts.

First, let us fix some notation.
In the following, we write $\varepsilon \colon E[-d] \rightarrow E$ for the canonical extension of $E$ with itself, and we write $E' \in \P(\A)$ for a fixed h-projective resolution of $E$ together with a fixed quasi-isomorphism
\begin{equation}
    \label{eqn:quasi-iso-E}
    E' \rightarrow E.
\end{equation}

We begin by proving the following

\begin{lem}
    \label{lift-system-sph-obj}
    With the notation as above, the sequence of $\A$-dg-modules and morphisms
    \[
        \dots \xrightarrow{\varepsilon} E[-2d] \xrightarrow{(-1)^{d+1}\varepsilon} E[-d] \xrightarrow{\varepsilon} E
    \]
    can be lifted to a one-sided infinite twisted complex in $\P(\A)$.
\end{lem}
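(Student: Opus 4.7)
The plan is to build the twisted complex in two stages: first lift $\varepsilon$ to a closed dg-morphism, then construct the higher components $\alpha_{ij}$ inductively via an obstruction theory argument, using the spherical condition to ensure every obstruction class vanishes in cohomology. Since $E'$ is h-projective, the class $\varepsilon\in\Hom_{\D(\A)}(E,E[d])$ admits a lift to a closed, degree-zero morphism of $\A$-dg-modules $\tilde\varepsilon\colon E'[-d]\to E'$. I would set $E_i := E'[id]$ for $i\le 0$, and declare $\alpha_{i,i+1}$ to be $\tilde\varepsilon$ with a sign depending on $i$ and $d$, chosen so that under the identifications $E_i \simeq E[id]$ in $\D(\A)$ the consecutive connecting maps read $\varepsilon,(-1)^{d+1}\varepsilon,\varepsilon,\dots$ as in the statement. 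Each $\tilde\varepsilon$ is closed, so the twisted complex relation for $j=i+1$ (where the sum over $k$ is empty) holds automatically.

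For the higher components I proceed by induction on $n := j-i\ge 2$. Assuming all $\alpha_{kl}$ with $l-k<n$ have been constructed and satisfy the prescribed relations, a direct computation using the Leibniz rule and the inductive relations shows that
\[
\sigma_{ij}\;:=\;\sum_{i<k<j}\alpha_{kj}\circ\alpha_{ik}
\]
is a $d$-closed element of $\Hom_{\A}^{\,2-n}(E_i,E_j)$. Producing $\alpha_{ij}$ of degree $1-n$ with $(-1)^{j}d\alpha_{ij}+\sigma_{ij}=0$ is then equivalent to showing that the cohomology class $[\sigma_{ij}]$ vanishes. But this class lives in
\[
H^{2-n}\bigl(\Hom_\A(E'[id],E'[jd])\bigr)\;\simeq\;\Hom^{\,n(d-1)+2}_{\D(\A)}(E,E),
\]
and the spherical hypothesis forces the latter graded algebra to be $A_d$, concentrated in degrees $0$ and $d$. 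A short case check shows that for every $n\ge 2$ and every $d\neq 0$ the exponent $n(d-1)+2$ avoids both $0$ and $d$, so $[\sigma_{ij}]=0$ and an $\alpha_{ij}$ with the required property can be chosen. Iterating over all pairs $i<j$ with $i\le 0$ assembles the full one-sided infinite twisted complex $(E_i,\alpha_{ij})$ in $\P(\A)$.

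The main obstacle I expect is bookkeeping rather than conceptual: one needs to verify that the signs attached to the $\alpha_{i,i+1}$ propagate through the signed twisted complex relation $(-1)^{j}d\alpha_{ij}+\sum_{i<k<j}\alpha_{kj}\circ\alpha_{ik}=0$ so as to reproduce exactly the prescribed pattern $\varepsilon,(-1)^{d+1}\varepsilon,\varepsilon,\dots$, and that the closedness of $\sigma_{ij}$ at each inductive stage follows from a telescoping of the lower-order relations. Once these combinatorial details are pinned down the obstruction-theoretic construction goes through uniformly in $d\neq 0$, delivering the desired lift.
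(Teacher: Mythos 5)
Your proposal is correct and follows essentially the same route as the paper: lift $\varepsilon$ to a closed morphism using h-projectivity of $E'$, then induct on $j-i$, observing that the obstruction $\sigma_{ij}$ is closed of degree $n(d-1)+2$ in $\Hom^{\bullet}_{\D(\A)}(E,E)\simeq A_d$, which avoids degrees $0$ and $d$ for all $n\ge 2$ and $d\neq 0$, hence is exact. The only cosmetic difference is that the paper constructs just $\alpha_{-n0}$ at each stage and defines the remaining $\alpha_{ij}$ with $j-i=n$ by shifting with the sign $(-1)^{j(d+1)}$, whereas you kill each obstruction class separately; both work for the same degree reasons.
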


\begin{pf}
  The components of the sought twisted complexes will be given by $E'[id]$ for $i \leq 0$.
  Then, to prove the statement of the lemma it is enough to find morphisms
  \[
    \alpha_{ij} \colon E'[id] \rightarrow E'[jd]
  \]
  of degree $i - j + 1$, $i < j$, such that the following diagram commutes for every $i \leq -1$
  \begin{equation}
     \label{eqn:commutative-diagram-lift}
    \begin{tikzcd}
      E'[id] \ar[r, "\alpha_{i(i+1)}"] \ar[d, "\eqref{eqn:quasi-iso-E}"] &[4em] E'[(i+1)d] \ar[d, "\eqref{eqn:quasi-iso-E}"]\\
      E[id] \ar[r, "(-1)^{(i+1)(d+1)}\varepsilon"] & E[(i+1)d]
    \end{tikzcd}
  \end{equation}
  and such that
  \begin{equation}
    \label{eqn:diff-proof-lift}
    (-1)^j d\alpha_{ij} + \sum_{i < k < j} \alpha_{kj} \circ \alpha_{ik} = 0.
  \end{equation}
  Indeed, then the sought twisted complex will be given by $(E'[id], \alpha_{ij})$, $i \leq 0$.
  
  In the following, when we say that $\alpha_{i(i+1)}$ induces the morphism $(-1)^{(i+1)(d+1)}\varepsilon$ via the quasi-isomorphism \eqref{eqn:quasi-iso-E}, we mean that the diagram \eqref{eqn:commutative-diagram-lift} commutes.
  
  We will prove the existence of the morphisms $\alpha_{ij}$ by induction on $j - i$.
  First of all, notice that if we have all the maps $\alpha_{ij}$ for $j - i < n$, then to define the maps $\alpha_{ij}$ with $j-i = n$ it is enough to define $\alpha_{-n0}$.
  Indeed, if we have $\alpha_{-n0}$, then we can set
  \begin{equation}
    \label{eqn:def-ij-from-n0}
    \alpha_{ij} := (-1)^{j(d+1)} \alpha_{-n0}[jd]
  \end{equation}
  for $j-i = n$, and using \eqref{eqn:diff-proof-lift} for $\alpha_{-n0}$ we have\footnote{In the above equations, we make use of the fact that for any morphism $f$ and any $m \in \mathbb{Z}$ it holds that $d(f[m]) = (-1)^m (df)[m]$.}
  \[
    \begin{aligned}
      (-1)^{j}d(\alpha_{ij}) & \, = d(\alpha_{-n 0})[jd] \\
      & \, = - \left(\sum_{-n < k < 0} \alpha_{k0} \circ \alpha_{-n k} \right)[jd]\\
      & \, = - \left(\sum_{i < k+j < j} ((-1)^{j(d+1)}\alpha_{(k+j)j}) \circ ((-1)^{j(d+1)} \alpha_{i (k+j)})  \right)\\
      & \, = - \sum_{i < r < j} \alpha_{rj} \circ \alpha_{ir}.
    \end{aligned}
  \]
  
  We now begin the inductive construction.
  For the case $j - i = 1$ we use that by the definition of an h-projective resolution we have
  \[
    H^0(\Hom_{\A}(E',E'[d])) \simeq \Hom_{\D(\A)}(E,E[d]),
  \]
  and therefore we can define $\alpha_{-10}$ by lifting $\varepsilon$ along the previous isomorphism.
  With this choice and the definition \eqref{eqn:def-ij-from-n0}, we get that $\alpha_{i(i+1)}$ induces $(-1)^{(i+1)(d+1)} \varepsilon$ via the quasi-isomorphism \eqref{eqn:quasi-iso-E}, as we wanted.
  
  Now assume that we defined $\alpha_{ij}$ for any $j-i < n$.
  We claim that to construct $\alpha_{-n0}$ it is enough to prove that $\sum_{-n < k < 0} \alpha_{k0} \circ \alpha_{-nk}$ is a closed morphism.
  Indeed, notice that $\sum_{-n < k < 0} \alpha_{k0} \circ \alpha_{-nk}$ has degree $2-n$ as a morphism $E'[-nd] \rightarrow E'$, and therefore it corresponds to a morphism $E' \rightarrow E'$ of degree $nd+2-n$.
  The cohomology of $\Hom_{\A}(E',E')$ is concentrated in degree $0$ and $d$ because $E$ is $d$-spherical, and therefore a closed element can be non-trivial in cohomology if and only if it has degree $0$ or $d$.
  Now, as we are in the case $n \geq 2$, we have either
  \[
    nd + 2 - n \geq 2(d-1) + 2 = 2d
  \]
  or $nd + 2 - n \leq 2d $, depending on whether $d > 0$ or $d < 0$.
  In either case, the degree of $\sum_{-n < k < 0} \alpha_{k0} \circ \alpha_{-nk}$ is not $0$ or $d$, and therefore if $\sum_{-n < k < 0} \alpha_{k0} \circ \alpha_{-nk}$ is closed it must be the differential of some morphism $E'[-nd] \rightarrow E'$ of degree $1-n$ that we can take to be our $\alpha_{-n0}$.
  
  We now prove that $\sum_{-n < k < 0} \alpha_{k0} \circ \alpha_{-nk}$ is closed.
  We have
  \[
    \begin{aligned}
      & \, d \left( \sum_{-n < k < 0} \alpha_{k0} \circ \alpha_{-nk} \right) \\
      = & \,  \sum_{-n < k < 0} d(\alpha_{k0}) \circ \alpha_{-nk} + (-1)^{k+1} \alpha_{k0} \circ d(\alpha_{-nk})\\
      = & \,  -\sum_{-n < k < 0} \sum_{k < r < 0} \alpha_{0r} \circ \alpha_{kr} \circ \alpha_{-nk} + \sum_{-n < k < 0} (-1)^{2(k+1)} \sum_{-n < s < k} \alpha_{k0} \circ \alpha_{sk} \circ \alpha_{-ns}
    \end{aligned}
  \]
  where in passing from the second to the third line we used that $\alpha_{k0}$ and $\alpha_{-nk}$ satisfy \eqref{eqn:diff-proof-lift} by the induction hypothesis.
  Our aim is to prove that the above terms sum to zero. Take a decomposition of $-n \to 0$ in three steps: $-n < s < r < 0$.
  To get the term $\alpha_{r0} \circ \alpha_{sr} \circ \alpha_{-ns}$ we have two possibilities: either from $d(\alpha_{s0}) \circ \alpha_{-ns}$ or $(-1)^{s+1}\alpha_{r0} \circ d(\alpha_{-ns})$.
  In the first case we get the term $-\alpha_{r0} \circ \alpha_{sr} \circ \alpha_{-ns}$, in the second case $(-1)^{2(s+1)} \alpha_{r0} \circ \alpha_{sr} \circ \alpha_{-ns}$, and they cancel out.

  Hence, given $\alpha_{ij}$ for $j-i < n$, we can define $\alpha_{-n0}$, and the inductive step is complete.
  Thus, the proof of the lemma is complete.
  \qed
\end{pf}

In the following, we write
\begin{equation}
    \label{eqn:twisted-complex-lift}
    (E'[id], \alpha_{ij})
\end{equation}
$i \leq 0$, for the twisted complex constructed in Lemma \ref{lift-system-sph-obj}.

The next step in the proof of Proposition \ref{good-rep-sph} is the following

\begin{lem}
    \label{lem:construction-gamma}
    Let us write $E''$ for the convolution of the twisted complex \eqref{eqn:twisted-complex-lift}.
    Then, there exists a closed morphism $p \colon E'' \rightarrow E''$ of degree $1-d$ such that the dg-cone of $p[-1]$ is $\A$-h-projective.
\end{lem}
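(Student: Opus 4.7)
The plan is to take $p \colon E'' \to E''$ to be the natural ``shift'' endomorphism of the convolution of the one-sided twisted complex \eqref{eqn:twisted-complex-lift}. Concretely, the underlying graded module of $E''$ is $\bigoplus_{i \leq 0} E'[id-i]$, and on the $i$-th summand (for every $i \leq 0$) we define $p$ to be a signed identity $E'[id-i] \to E'[(i-1)d-(i-1)] = E'[id-i][1-d]$ landing in the $(i-1)$-th summand, the sign to be fixed by the closedness condition below. This is a morphism of degree $1-d$ by construction.

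To verify that $p$ is closed, decompose the differential on $E''$ as $d_{E''} = d_{\rm int} + \sum_{i<j} \alpha_{ij}$, where $d_{\rm int}$ is block-diagonal and the $\alpha_{ij}$ contribute off-diagonal maps of internal degree one in the convolution. Because $p$ acts on each summand as the same underlying identity of $E'$ across consecutive positions, the commutator with $d_{\rm int}$ matches up to the shift sign $(-1)^n$ in the convention $d_{M[n]} = (-1)^n d_M$. The commutator with the off-diagonal part uses crucially the formula \eqref{eqn:def-ij-from-n0}: shifting all indices down by one yields $\alpha_{(i-1)(j-1)} = (-1)^{(j-1)(d+1)} \alpha_{-n0}[(j-1)d]$, which agrees with $\alpha_{ij}$ up to the shift $[-d]$ and a controlled sign. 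Matching these signs against the $(-1)^l$ factor in \eqref{eqn:differential-morphisms-twisted-complexes} forces a specific choice of sign in the definition of $p$ (plausibly $(-1)^{i(d+1)}$ on the $i$-th component), and with this choice all contributions cancel uniformly across pairs of summands, giving $dp = 0$.

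Next, view $p$ as a closed degree zero morphism $p \colon E'' \to E''[1-d]$ and consider its shift $p[-1] \colon E''[-1] \to E''[-d]$, which is again closed of degree zero. Its dg-cone has underlying graded module $E'' \oplus E''[-d]$ with twisted differential as in \S \ref{subsubsection:dg-cone}. For h-projectivity it suffices to show that $E''$ itself is h-projective, since dg-cones of closed degree zero morphisms between h-projective modules are h-projective (the subcategory of h-projectives is triangulated in $\textbf{Mod}\text{-}\A$). Each summand $E'[id-i]$ of $E''$ is a shift of the h-projective module $E'$, hence h-projective; and arbitrary direct sums of h-projective $\A$-dg-modules are h-projective because $\mathrm{Hom}_{\A}(\bigoplus_\alpha P_\alpha, S) = \prod_\alpha \mathrm{Hom}_{\A}(P_\alpha, S)$ has vanishing cohomology whenever each factor does. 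The twisted differential $\sum \alpha_{ij}$ does not obstruct this: one can present $E''$ as the mapping telescope of its finite truncations $(E'[id], \alpha_{ij})_{-n \leq i \leq 0}$, each of which is an iterated dg-cone of h-projectives (hence h-projective), and the telescope of h-projective modules is again h-projective.

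The main technical obstacle is the sign-bookkeeping required to prove $dp = 0$. The interplay between the $(-1)^{j(d+1)}$ in \eqref{eqn:def-ij-from-n0}, the $(-1)^l$ in \eqref{eqn:differential-morphisms-twisted-complexes}, and the shift sign $(-1)^n$ in $d_{M[n]}$ must reconcile uniformly across every pair of summands $E'[id-i]$ and $E'[(j-1)d-(j-1)]$; this is what pins down the sign that decorates the identity map defining $p$. A secondary but routine point is to justify carefully that the convolution of a bounded-above (one-sided) twisted complex of h-projective $\A$-dg-modules is h-projective; this is standard once one expresses $E''$ as a mapping telescope of finite iterated dg-cones, and hence does not present a real difficulty.
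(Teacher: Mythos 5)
Your overall strategy is the same as the paper's: define $p$ as the tautological ``shift by one position'' endomorphism of the convolution of \eqref{eqn:twisted-complex-lift}, check it is closed using \eqref{eqn:def-ij-from-n0}, and get h-projectivity of the cone from h-projectivity of $E''$, which in turn follows from exhibiting $E''$ as a colimit of finite convolutions of h-projectives. However, as written your map goes the wrong way and has the wrong degree. You send the summand $E'[id-i]$ to the $(i-1)$-st summand $E'[(i-1)d-(i-1)]=E'[id-i][1-d]$; with the paper's conventions the identity viewed as a map $V\to V[k]$ has degree $-k$, so this morphism has degree $d-1$, not $1-d$. The correct map sends the $i$-th summand to the $(i+1)$-st one (towards position $0$), i.e.\ $E'[id-i]\to E'[(i+1)d-(i+1)]=E'[id-i][d-1]$, which is the paper's $\pi=\{\pi_{-d}^{-j(1-j)}=\mathrm{id}\}$ and does have degree $1-d$. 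This is not merely cosmetic: the degree $1-d$ is part of the statement, and the subsequent Lemma \ref{lem:dg-cone-is-E} needs $p$ to project $E''$ onto its truncation away from position $0$ so that $C(p[-1])\simeq E'$.

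The second issue is that the closedness of $p$ — which is the actual content of the lemma — is only asserted, not proved: you say a sign ``plausibly $(-1)^{i(d+1)}$'' can be chosen so that ``all contributions cancel uniformly.'' In fact, with the correct direction no extra signs on the components of $p$ are needed: the paper takes every component to be the identity and verifies directly, for each fixed target position $-j$, that the two families of terms $\alpha_{(1-k)-j}\circ\pi_{-d}^{-k(1-k)}$ and $\pi_{-d}^{(-1-j)-j}\circ\alpha_{m(-1-j)}$ in \eqref{eqn:differential-morphisms-twisted-complexes} cancel pairwise, using the relation $\alpha_{ij}=(-1)^{j(d+1)}\alpha_{-n0}[jd]$ from \eqref{eqn:def-ij-from-n0}. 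You should carry out this computation rather than leave the sign unresolved; until you do, the lemma is not established. The h-projectivity part of your argument (finite truncations are iterated dg-cones of h-projectives, and the exhaustive colimit/telescope is again h-projective) is correct and coincides with the paper's appeal to the exhaustive filtration of $E''$ by h-projective dg-modules.
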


\begin{pf}
    We will define a morphism $\pi$ at the level of the twisted complex \eqref{eqn:twisted-complex-lift}, and then convolve it to a morphism $p \colon E'' \rightarrow E''$.
    
    Following the convention introduced in § \ref{subsubsection:twisted-complexes}, we define the morphism of twisted complexes
    \[
        \pi = \left\{  \pi_{-d}^{-j (1-j)} := {\rm id} \colon E'[-jd] \rightarrow E'[(1-j)d] \right\}
    \]
    where $j \geq 1$.
    By definition, $\pi$ has degree $1-d$.
    We now prove that is a closed morphism.
    As per the definition given in § \ref{subsubsection:twisted-complexes}, the differential of $\pi_{-d}^{-k(1-k)}$ is given by
    \[
        \sum_{1-k < m} \alpha_{(1-k) m} \circ \pi_{-d}^{-k (1-k)} - (-1)^{1-d} \sum_{m < -k} \pi_{-d}^{-k (1-k)} \circ \alpha_{m (-k)}.
    \]
    
    To prove that $\pi$ is closed, we fix $j \geq 0$ and we focus our attention to the components of $\pi$ that map to $E'[-jd]$.
    These are given by
    \begin{equation}
        \label{eqn:components-gamma-lem}
        \sum_{1+j < k} \alpha_{(1-k) -j} \circ \pi_{-d}^{-k (1-k)} - (-1)^{1-d} \sum_{m < -1-j} \pi_{-d}^{(-1-j )-j} \circ \alpha_{m (-1-j)}.
    \end{equation}
    Using the definition of $\alpha_{ij}$ given in \eqref{eqn:def-ij-from-n0}, we see that \eqref{eqn:components-gamma-lem} is equal to the shift by $-jd$ of
    \[
    \begin{aligned}
      & \, \left( \sum_{1-k+j < 0} (-1)^{j(d+1)} \alpha_{(1-k+j)0}\circ \pi_{-d}^{(j-k)(1-k+j)} - (-1)^{(j+1)(d+1)}\sum_{m + 1 + j <0 } \pi_{-d}^{-1 0} \circ\alpha_{(m+j)-1} \right)\\
      = & \, (-1)^{j(d+1)} \left( \sum_{n < 0}  \alpha_{n0} \circ \pi_{-d}^{(n-1) n} - (-1)^{(d+1)} \sum_{n < -1} \pi_{-d}^{-1 0} \circ \alpha_{n(-1)} \right) = 0.
    \end{aligned}
    \]
    Here the last equality follows from the fact that, given any $n < 0$, the first term contributes with $\alpha_{n0} \circ \pi_{-d}^{(n-1)n}$, while the second term contributes with $(-1)^{d+1}\pi_{-d}^{-10} \circ \alpha_{(n-1)-1}$.
    Using \eqref{eqn:def-ij-from-n0}, one sees that these two terms cancel out.
    
    Hence, $\pi \colon \eqref{eqn:twisted-complex-lift} \rightarrow \eqref{eqn:twisted-complex-lift}$ is a closed morphism of twisted complexes of degree $1-d$, and we define $p \colon E'' \rightarrow E''[1-d]$ as its convolution.
    
    Let us write $F := C(p[-1])$ for the dg-cone of $p[-1] \colon E''[-1] \rightarrow E''[-d]$ as defined in § \ref{subsubsection:dg-cone}.
    
    We now show that $F$ is $\A$-h-projective\textbf{}.
    It is enough to prove that $E''$ is $\A$-h-projective, and this is what we show.
    The fact that $E''$ is h-projective follows from the fact that it is the convolution of a one-sided twisted complex whose components are $\A$-h-projective, and they are non-zero only in negative degree.
    Indeed, these properties imply that $E''$ has an exhaustive filtration by h-projective $\A$-dg-modules, see also \cite[Tag 09KK]{Sta}, and thus it is $\A$-h-projective.
    \qed
\end{pf}

%To conclude the proof of Proposition \ref{good-rep-sph}, we have to show that $F$ in Lemma \ref{lem:construction-gamma} is an h-projective resolution of $E$.
%We do so in the lemma below.

In the lemma below, we show that $F$ in Lemma \ref{lem:construction-gamma} is an $\A$-h-projective resolution of $E$. 

\begin{lem}
    \label{lem:dg-cone-is-E}
    The dg-cone $F$ of the morphism $p[-1] \colon E''[-1] \rightarrow E''[-d]$ constructed in Lemma \ref{lem:construction-gamma} is quasi-isomorphic to $E$.
\end{lem}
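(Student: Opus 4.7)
The plan is to exhibit both $F$ and $E$ as the same fiber in the derived category. Rotating the defining triangle $E''[-1] \xrightarrow{p[-1]} E''[-d] \to F \to E''$ of the dg-cone yields the distinguished triangle
\[
    F \longrightarrow E'' \xrightarrow{\,p\,} E''[1-d]
\]
in $\D(\A)$. The goal is then to produce a second distinguished triangle with the same middle arrow, but with $E'$ (an $\A$-h-projective resolution of $E$) in place of $F$; uniqueness of the third term of a distinguished triangle will force $F \simeq E' \simeq E$.

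To this end, I would consider the natural inclusion $\iota \colon E' \hookrightarrow E''$ sending $E'$ onto the position-zero summand of the convolution. Because the twisted complex \eqref{eqn:twisted-complex-lift} is one-sided with largest index $0$, the differential on $E''$ contains no term $\alpha_{0j}$ with $j > 0$, so $\iota$ is a closed degree-zero morphism of $\A$-dg-modules. Moreover $p \circ \iota = 0$: by construction in Lemma \ref{lem:construction-gamma}, the only non-zero components of $\pi$ are the $\pi_{-d}^{-j(1-j)}$ with $j \geq 1$, none of which is sourced at position $0$.

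The key step is to establish that
\[
    0 \longrightarrow E' \xrightarrow{\iota} E'' \xrightarrow{p} E''[1-d] \longrightarrow 0
\]
is a short exact sequence of $\A$-dg-modules. As graded modules, $E'' = \bigoplus_{i \leq 0} E'[i(d-1)]$ and, after shifting, $E''[1-d] = \bigoplus_{i \leq 0} E'[(i-1)(d-1)]$. Unwinding the definitions, $p$ acts as the identity between the position-$(-j)$ summand of $E''$ and the position-$(1-j)$ summand of $E''[1-d]$ for every $j \geq 1$ (both summands being $E'[-j(d-1)]$), and has no other non-zero components. Hence $p$ is termwise surjective with kernel exactly the position-zero summand, which is $\iota(E')$. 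Since $\iota$ and $p$ are closed, this is indeed a short exact sequence of dg-modules, and thus induces the desired distinguished triangle in $\D(\A)$.

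Comparing this triangle with the one produced from $F$, both of which have the same middle arrow $p$, yields $F \simeq E'$ in $\D(\A)$, and combining with the quasi-isomorphism $E' \simeq E$ completes the argument. The main technical difficulty will be the bookkeeping of shifts and signs: one must correctly identify $E''[1-d]$ with the shifted tail of $E''$ so that $p$ becomes literally the identity between matched summands, and track the signs produced by the convolution differential in order to confirm that $\iota$ and $p$ are genuine chain maps.
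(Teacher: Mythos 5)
Your proof is correct, and it reaches the conclusion by a route that is recognizably different from, and somewhat leaner than, the paper's. The paper constructs an explicit closed degree-zero morphism $f \colon E''[-d] \to E'$ as the convolution of the morphism of twisted complexes $\varphi = \{\alpha_{(n-1)0}[d]\}$, verifies closedness of $\varphi$ by a computation using the relations \eqref{eqn:diff-proof-lift}, and then exhibits $E''$ together with maps $i,j,p,q$ as the dg-cone of $f$; the resulting triangle $E''[-d] \xrightarrow{f} E' \to E'' \xrightarrow{p} E''[1-d]$ gives $F = C(p[-1]) \simeq E' \simeq E$. You instead observe that on underlying graded modules $p$ is the ``shift the tail up by one'' map, hence surjective with kernel exactly the position-zero summand $E'$, so that $0 \to E' \xrightarrow{\iota} E'' \xrightarrow{p} E''[1-d] \to 0$ is a fiberwise short exact sequence of dg-modules; the induced distinguished triangle shares its middle arrow with the rotated dg-cone triangle, whence $F \simeq E'$ by uniqueness of cones. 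Your route avoids constructing $f$ and checking that it is closed (the connecting map of your triangle is precisely the paper's $f$, but you never need it explicitly), at the price of invoking the standard fact that a fiberwise short exact sequence of dg-modules induces a distinguished triangle in $\D(\A)$. The two checks you defer both go through: $\iota$ is a chain map because the one-sided twisted complex has no components $\alpha_{0j}$ with $j>0$, and $p$ identifies the summand $E'[-j(d-1)]$ of $E''$ with the summand $E'[-j(d-1)]$ of $E''[1-d]$ up to sign, so any sign conventions in the convolution affect neither surjectivity nor the kernel.
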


\begin{pf}
    To prove the lemma, we construct a closed, degree zero morphism $f \colon E''[-d] \rightarrow E'$ such that we have a distinguished triangle
    \[
        E''[-d] \xrightarrow{f} E' \rightarrow E'' \xrightarrow{p} E''[1-d]
    \]
    thus proving that $F=C(p[-1]) \simeq E' \simeq E$ in $\D(\A)$.
    
    We define $f$ as the convolution of a closed, degree zero morphism of twisted complexes $\varphi \colon \eqref{eqn:twisted-complex-lift} \rightarrow (E'[d],0)$, where $(E'[d],0)$ is the twisted complex with $E'[d]$ in position zero.
    The degree zero morphism $\varphi$ is defined as
    \[
        \begin{array}{lcr}
            \varphi = \{ \varphi_{n}^{n0} := \alpha_{(n-1)0}[d] \colon E'[-nd] \rightarrow E'[d] \} & & n \geq 0.
        \end{array}
    \]
    
    Let us show that $\varphi$ is closed.
    We fix $j \leq 0$, and we focus our attention on the component of $d\varphi$ starting from $E'[jd]$.
    By definition, this is given by
    \[
        \begin{aligned}
            & d(\varphi^{j0}) - \sum_{j < k \leq 0} \varphi_{0}^{k0} \circ \alpha_{jk} \\
            = \, & (-1)^{d} d(\alpha_{(j-1)0})[d] - \sum_{j < k \leq 0} \alpha_{(k-1)0}[d] \circ \alpha_{jk}\\
            = \, & (-1)^{d} d(\alpha_{(j-1)0})[d] - (-1)^{d+1} \sum_{j-1 < k - 1 < 0} (\alpha_{(k-1)0} \circ \alpha_{(j-1)(k-1)})[d]\\
            = \, & 0
        \end{aligned}
    \]
    where the last equality follows from the fact that the $\alpha_{ij}$ satisfy \eqref{eqn:diff-proof-lift}.
    Hence, $\varphi$ is a closed morphism of degree zero, and upon convolution (and a shift by $-d$) it induces a morphism $f \colon E''[-d] \rightarrow E'$.
    
    To conclude the proof of the lemma, we construct morphisms $i$, $j$, and $q$ such that
    \begin{equation}
        \label{eqn:dg-cone-is-E''}
    \begin{array}{lcr}
      E''[1-d] \xrightarrow{i} E'' \xrightarrow{p} E''[1-d] & \mathrm{and} & E' \xrightarrow{j} E'' \xrightarrow{q} E'
    \end{array}
    \end{equation}
    realise $E''$ as the dg-cone of $f$.
    
    The morphisms $j$ and $q$ are defined as the inclusion of $E'$ into $E''$ and the projection $E''$ onto $E'$, respectively.
    Such maps exists because at the level of underlying graded modules $E''$ is the direct sum $\oplus_{i \leq 0} E'[id]$.
    
    The morphism $i$ is defined as the convolution of the morphism $\iota \colon \eqref{eqn:twisted-complex-lift} \rightarrow \eqref{eqn:twisted-complex-lift}$ defined by
    \[
        \iota = \left\{ \iota_{d}^{(1-j)-j} := \mathrm{id} \colon E'[(1-j)d] \rightarrow E'[-jd] \right\}
    \]
    for $j \geq 1$.
    
    The fact that the morphisms in \eqref{eqn:dg-cone-is-E''} realise $E''$ as the dg-cone of $f$ is an easy check, and we leave it to the reader.
    
    We can now complete the proof by noticing that by Remark \ref{rmk:triangle-dg-cone} we have the distinguished triangle
    \[
        E''[-d] \xrightarrow{f} E' \xrightarrow{j} E'' \xrightarrow{p} E''[1-d]
    \]
    and therefore we have the quasi-isomorphisms
    \[
    F = C(p[-1]) \xleftarrow{j} E' \xrightarrow{\eqref{eqn:quasi-iso-E}} E
    \]
    where we wrote $j \colon E' \rightarrow F$ for the morphism $E' \xrightarrow{j} E'' \rightarrow C(p[-1])$, where the second morphism is the one given by the definition of a dg-cone.
    \qed
\end{pf}

We are now in the position to prove Proposition \ref{good-rep-sph}.

\begin{pf}[Proof of Proposition \ref{good-rep-sph}]

    Lemma \ref{lem:dg-cone-is-E} constructs for us an $\A$-h-projective resolution $F$ of $E$.
    This $F$ will be the one whose existence is claimed in the statement of Proposition \ref{good-rep-sph}.
    
    To conclude the proof of Proposition \ref{good-rep-sph}, we only have to construct the morphism $\alpha_{\epsilon}$.
    We define $\alpha_{\epsilon} \colon F \rightarrow F$ using the direct sum decomposition of $F$.
    Namely, as a graded module $F$ is given by $E'' \oplus E''[-d]$, and when we write $(a,b) \in F$ we mean that $a \in E''$ and $b \in E''[-d]$ in this decomposition.
    Then, $\alpha_{\epsilon}$ is given by
  \[
    F = C(p[-1]) \ni (a,b) \mapsto \alpha_{\epsilon}(a,b) := (0,a) \in C(p[-1]) = F.
  \]
  It is clear that $\alpha_{\epsilon}$ is a closed, degree $d$ morphism such that $\alpha_{\epsilon}^2 = 0$.
  
  We now prove that under the quasi-isomorphisms $F \xleftarrow{j} E' \xrightarrow{\eqref{eqn:quasi-iso-E}} E$ the morphism $\alpha_{\epsilon}$ corresponds to the canonical extension $\varepsilon$ of $E$.
  By the definition of $\alpha_{-10}$, this is equivalent to prove that under the quasi-isomorphism $E' \xrightarrow{j} F$ the morphism $\alpha_{\epsilon}$ corresponds to $\alpha_{-10}$.
  We prove the latter statement.
  
  Take $e \in E'$ a closed element.
  Then (recall the notation for elements belonging to the convolution of a one-sided twisted complex that we introduced in § \ref{convolution-twisted-complexes})
  \[
    \alpha_{\epsilon}(j(e)) = (0, (\dots, 0, 0, e)) = ((\dots, 0, 0, \alpha_{-10}(e)), 0) + d_{F}((\dots, 0, e, 0), 0).
  \]
  Therefore, we get
  \[
    \alpha_{\epsilon}(j(e)) = j(\alpha_{-10}(e)) + d_{F}((\dots, 0, e, 0), 0),
  \]
  which means that $\alpha_{\epsilon} \circ j = j \circ \alpha_{-10}$ in $\D(\A)$, as we wanted.
  Thus, the proof of Proposition \ref{good-rep-sph} is complete.
  \qed
\end{pf}

From now on, every time we have a spherical object $E$ we implicitly assume we replaced it with $F$ as in Proposition \ref{good-rep-sph}. 
Hence, every spherical object $E$ is really $E\in\P(\A)^c$ and there is $\alpha_{\epsilon} \in\Hom^d_{\A}(E,E)$ such that $\alpha_{\epsilon}^2=0$.

Let $E\in\D(\A)^c$ be a $d$-spherical object. The following proposition is an analogue of \cite[Proposition 6.3]{Kea} (recall the convention § \ref{subsubsection:twisted-complexes} \eqref{item:notation-finite-twisted-complex}).

\begin{prop}
For any $N\in\D(\A)$ the object $T^n_E N,\ n\geq0$, is the convolution of the twisted complex
\begin{equation}\label{tot-sph-power}
    \RHom_\A(E,N) \otimes E[-nd] \xrightarrow{d_n} \cdots \xrightarrow{d_2} \RHom_\A(E,N) \otimes E \xrightarrow{ev} N, 
\end{equation}
where $d_k(f\otimes e):=f \alpha_{\epsilon} \otimes e+(-1)^k f \otimes \alpha_{\epsilon} e$. 
\end{prop}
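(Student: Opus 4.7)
The plan is to proceed by induction on $n \geq 0$. The case $n = 0$ is trivial ($T_E^0 N = N$) and the case $n = 1$ is precisely the defining exact triangle (\ref{sph-tri}) of the spherical twist.

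For the inductive step, assume the formula holds for $n$ and write $T_E^{n+1} N$ as the cone of the adjunction counit $\RHom_\A(E, T_E^n N) \otimes E \xrightarrow{ev} T_E^n N$. The first step is to unravel the left-hand side: since $\RHom_\A(E,-)$ is a dg-functor and thus commutes with convolutions of twisted complexes, applying it termwise to the inductive twisted complex $\tilde{C}_n$ expresses $\RHom_\A(E, T_E^n N)$ as a convolution. Here Proposition \ref{good-rep-sph} is essential: it provides $\RHom_\A(E, E) \simeq A_d = K[\epsilon]/\epsilon^2$ with $\epsilon$ represented honestly by the closed morphism $\alpha_\epsilon$ (rather than only up to quasi-isomorphism). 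Each term $\RHom_\A(E, N) \otimes E[-kd]$ of $\tilde{C}_n$ then becomes $\RHom_\A(E, N) \otimes A_d[-kd]$, and tensoring with $E$ on the right splits it as $\RHom_\A(E, N) \otimes E[-kd] \oplus \RHom_\A(E, N) \otimes E[-(k+1)d]$, with the two summands corresponding to $1$ and $\epsilon$ in $A_d$.

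Next, I would form the cone with $T_E^n N$. On each pair above, the $1$-summand maps to $\RHom_\A(E, N) \otimes E[-kd]$ in $T_E^n N$ by the identity morphism, so these matched pairs cancel in the cone via a standard contractible-subcomplex argument. The surviving pieces give exactly the desired shape: the $\epsilon$-summands provide $\RHom_\A(E, N) \otimes E[-(k+1)d]$ for $k = 0, \dots, n$ (shifts $-d$ through $-(n+1)d$); the application of $\RHom_\A(E, -) \otimes E$ to the final $N$ of $\tilde{C}_n$ provides $\RHom_\A(E, N) \otimes E$ at shift $0$; and $N$ appears at the end. Re-indexing produces the $(n+1)$-step chain $\RHom_\A(E, N) \otimes E[-(n+1)d] \to \cdots \to \RHom_\A(E, N) \otimes E \to N$ of the statement.

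The last and most delicate step is verifying the differential formula. Each morphism $d_{k+1}$ receives two contributions: the summand $f\alpha_\epsilon \otimes e$ arises from the $\alpha_\epsilon$-action built into the evaluation map through the $\epsilon$-part of $A_d$, while the summand $(-1)^{k+1} f \otimes \alpha_\epsilon e$ is inherited from $d_k$ of $\tilde{C}_n$ after the shift in index. I expect the main obstacle to lie precisely here: carrying through the Koszul signs dictated by §\ref{subsubsection:twisted-complexes} and §\ref{convolution-twisted-complexes} through the composite operation (termwise $\RHom_\A(E,-) \otimes E$, then cone, then identification of the contractible subcomplex) is tedious, and the exact form of the sign $(-1)^{k+1}$ emerges only after careful bookkeeping. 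The computation itself is mechanical, but it is where any slip would occur.
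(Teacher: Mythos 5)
Your proposal is correct and follows essentially the same route as the paper: the paper's proof consists precisely of the reduction you describe (using Proposition \ref{good-rep-sph} to replace $\Hom_\A(E,E)$ by the quasi-isomorphic subalgebra $K[\alpha_\epsilon]\simeq A_d$) followed by a citation of Keating's Proposition 6.3, whose argument is exactly the inductive cone-plus-Gaussian-elimination computation you spell out. The only difference is that you write out the details the paper outsources to the reference.
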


\begin{pf}
    If we had $\RHom_\A(E,E)=\Hom_\A(E,E)\simeq A_d$, we could replicate the proof of \cite[Proposition 6.3]{Kea}. 
    We now explain how to reduce to this case. 
    Notice that the inclusion of the subalgebra $K[\alpha_{\epsilon}] \subset \Hom_\A(E,E)$ is a quasi-isomorphism. 
    Therefore, in the twisted complex representing $T^n_E N$ we can replace $\Hom_\A(E,E)$ with $K[\alpha_{\epsilon}]$ everywhere. 
    The new twisted complex is of the same form as the one of \cite[Proposition 6.3]{Kea}, so we can apply that proof and get the result. 
\qed
\end{pf}

Let us write $C_n(E,N)$ for the convolution of the twisted complex
\begin{equation}
    \label{eqn:c-n-e-n}
    \RHom_\A(E,N) \otimes E[-nd] \xrightarrow{d_n} \cdots \xrightarrow{d_2} \RHom_\A(E,N) \otimes E.
\end{equation}
Then, we have

\begin{prop}\label{good-rep-hom}
    Let $M,N\in\D(\A)$ and $n\geq0$. 
    Then, $\RHom_\A(M,C_n(E,N))$ is isomorphic to the convolution of the twisted complex
    \begin{equation*}
    \RHom_\A(E,N)\otimes \RHom_\A(M,E)[-nd] \xrightarrow{d_n}\cdots\xrightarrow{d_2} \RHom_\A(E,N)\otimes \RHom_\A(M,E), 
    \end{equation*}
    where $d_k(f\otimes g):=f\alpha_{\epsilon}\otimes g+(-1)^k f \otimes \alpha_{\epsilon} g$. 
\end{prop}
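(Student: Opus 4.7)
The plan is to apply the dg-functor $\Hom_{\A}(M',-)$ (where $M'$ is an h-projective resolution of $M$) term by term to the finite one-sided twisted complex \eqref{eqn:c-n-e-n}, and then recognize the result as the twisted complex claimed in the statement. The reduction to an h-projective $M'$ is needed so that $\RHom_{\A}(M,-)$ can be computed as $\Hom_{\A}(M',-)$, turning a derived assertion into a strict assertion at the level of $\textbf{Mod}\text{-}\A$.

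The key structural fact is that convolution is a dg-functor from one-sided twisted complexes to $\A$-dg-modules (see \S \ref{convolution-twisted-complexes} and \cite[\S 3.2]{AL}), and any dg-functor commutes with the convolution of a finite one-sided twisted complex in the evident way. Thus, applying the dg-functor $\Hom_{\A}(M',-)$ term by term to \eqref{eqn:c-n-e-n} produces a finite one-sided twisted complex in $\textbf{Mod}\text{-}K$ whose convolution is canonically isomorphic to $\Hom_{\A}(M',C_n(E,N)) \simeq \RHom_{\A}(M,C_n(E,N))$.

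Next, for each term one uses the natural isomorphism
\[
    \Hom_{\A}\bigl(M', \RHom_{\A}(E,N) \otimes E[-kd]\bigr) \;\simeq\; \RHom_{\A}(E,N) \otimes \Hom_{\A}(M',E)[-kd],
\]
which holds because the left factor $\RHom_{\A}(E,N)$ is a $K$-dg-vector space; tensoring on the left by a $K$-dg-vector space commutes with the dg-functor $\Hom_{\A}(M',-)$. This yields precisely the modules that appear in the target twisted complex.

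Finally, one verifies that the differentials match. Under the identification above, an element $f\otimes g$ (with $f \in \RHom_{\A}(E,N)$ and $g\colon M'\to E$) corresponds to the morphism $m \mapsto f\otimes g(m)$, and post-composing with the original $d_k(f\otimes e)=f\alpha_\epsilon \otimes e + (-1)^k f\otimes \alpha_\epsilon e$ produces $m \mapsto f\alpha_\epsilon \otimes g(m) + (-1)^k f\otimes \alpha_\epsilon g(m)$, which is exactly $f\alpha_\epsilon \otimes g + (-1)^k f \otimes \alpha_\epsilon g$ under the identification. The main work is really just bookkeeping: checking compatibility of the tensor-hom swap with the dg-structure (differentials, shifts, Koszul signs) and confirming that the twisted-complex structure maps transport correctly. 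There is no conceptual obstacle once one commits to the h-projective model for $M$ and invokes dg-functoriality of convolution.
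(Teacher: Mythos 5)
Your argument is correct and is essentially the paper's own proof written out in full: the paper disposes of this proposition by invoking the definition of $C_n(E,N)$ together with \cite[Lemma 3.4]{AL}, which is precisely the compatibility of $\Hom_\A(M',-)$ with convolutions of one-sided twisted complexes that you establish by hand, followed by the same termwise identification and sign check. The one step that deserves an extra word is the swap $\Hom_\A\bigl(M',\RHom_\A(E,N)\otimes E\bigr)\simeq \RHom_\A(E,N)\otimes\Hom_\A(M',E)$, which is not automatic for an infinite-dimensional graded vector space and a non-compact $M'$; it is harmless here because the proposition is only ever applied when $i(E,M)$ and $i(E,N)$ are finite, so that $\RHom_\A(E,N)$ may be replaced by its finite-dimensional cohomology (cf. Remark \ref{rem:independence-tensor-prod}).
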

\begin{pf}
    The claim follows from the definition of $C_n(E,N)$ and \cite[Lemma 3.4]{AL}.
\qed
\end{pf}

%---------------------------------------------------------
\subsection{Graded dual numbers}
Recall that we write $A_d =K[\epsilon]/\epsilon^2$ for the dg-algebra of the graded dual numbers with $\mathrm{deg}(\epsilon)=d$. 
The aim of this subsection is to classify finite dimensional dg-modules over $A_d$.
We write $\D^b(A_d)$ for the triangulated subcategory of $\D(A_d)$ given by $A_d$-dg-modules with finite dimensional total cohomology.

To prove Proposition \ref{class-fin-dim-mod} below, we will use Koszul duality.
Namely, let us define $K[q]$ as the dg-algebra with $\deg(q) = 1-d$ and $d(q) = 0$.
Then, Koszul duality says that we have an equivalence $\Phi \colon \D(K[q])^c \xrightarrow{\simeq} \D^b(A_d)$ sending $K[q]$ to $K$.
For a recent proof of this statement, see \cite{KS-2022}.

Having $\Phi$ means that to prove a structure theorem for modules in $\D^b(A_d)$ it is enough to study the structure of modules in $\D(K[q])^c$, and this is what we do.

Before moving on to the proposition, let us notice a useful consequence of the equivalence $\D(K[q])^c \simeq \D^b(A_d)$.
By definition, we have $\Hom_{\D(K[q])^c}(K[q][-m(1-d)], K[q]) \simeq K$ for any $m \geq 1$, and the unique non-trivial extension is given by the convolution of the twisted complex
\begin{equation}
    \label{eqn:extension-K[q]}
    K[q][-m(1-d)] \xrightarrow{q^m} K[q],
\end{equation}
which is quasi-isomorphic $K[q]/q^m$.
As $\Phi$ is an equivalence sending $K[q]$ to $K$, we have
\[
    K \simeq \Hom_{\D(K[q])^c}(K[q][-m(1-d)], K[q]) \simeq \Hom_{\D^b(A_d)}(K[-m(1-d)],K),
\]
and therefore there is a unique non-trivial extension of $K$ by itself of degree $m(1-d)$ in $\D^b(A_d)$ for any $m \geq 1$.
This extension is given by the convolution of the twisted complex
\begin{equation}
    \label{eqn:extension-A-d-finite}
    A_d[(1-m)d] \xrightarrow{\epsilon} A_d[(2-m)d] \xrightarrow{\epsilon} \dots \xrightarrow{\epsilon} A_d[-d] \xrightarrow{\epsilon} A_d,
\end{equation}

As $\Phi$ is an equivalence, we get that $\Phi(\eqref{eqn:extension-K[q]})$ is isomorphic, up to a shift that can be computed to be $m(1-d)-1$, to $\eqref{eqn:extension-A-d-finite}$ for any $m \geq 1$.

We write $B_n$, $n \geq 1$, for the convolution of \eqref{eqn:extension-A-d-finite}, and $B_0 = K \in \D^b(A_d)$.
Similarly, we write $C_n$ for the convolution of \eqref{eqn:extension-A-d-finite} with $A_d$ replaced by $A_d^{op}$, and $C_0 = K \in \D^b(A_d^{op})$.
The following proposition gives the desired classification (cf. \cite[Proposition 5.3]{Kea}).

\begin{prop}\label{class-fin-dim-mod}
Let $M$ be a finite dimensional right (resp. left) $A_d$-dg-module.
Then, $M$ is quasi-isomorphic to a finite direct sum of copies of shifts of $B_n$'s (resp. $C_n$'s).
Moreover, $M$ is compact if and only if $B_0$ (resp. $C_0$) does not appear.
\end{prop}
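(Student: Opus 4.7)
The plan is to transfer the question across the Koszul duality $\Phi \colon \D(K[q])^c \xrightarrow{\simeq} \D^b(A_d)$ recalled in the excerpt, so that it suffices to classify the compact dg-modules over the graded polynomial ring $K[q]$ (with zero differential and $\deg q = 1-d$). This is attractive because $K[q]$ is a graded PID, with a classical structure theorem one can hope to leverage directly.

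First I would observe that, as a graded polynomial ring in one variable, $K[q]$ is graded hereditary: $\Ext^{\geq 2}_{K[q]}(-,-) = 0$. The standard consequence is that every $X \in \D(K[q])^c$ is formal,
\[
X \simeq \bigoplus_{n \in \ZZ} H^n(X)[-n],
\]
since the connecting classes in the truncation triangles of $X$ lie in vanishing $\Ext^2$-groups. Since $X$ is compact, each $H^n(X)$ is a finitely generated graded $K[q]$-module, and the structure theorem over the graded PID $K[q]$ decomposes it as a finite direct sum of shifts of $K[q]$ and of cyclic modules $K[q]/q^m$ with $m \geq 1$. Combining these two steps, every compact $K[q]$-dg-module is a finite direct sum of shifts of $K[q]$ and $K[q]/q^m$.

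Transferring back via $\Phi$, the excerpt already identifies $\Phi(K[q]) = K = B_0$ and identifies $\Phi(K[q]/q^m)$ with a determined shift of $B_m$ for each $m \geq 1$, using uniqueness of a non-trivial self-extension of $K$ in degree $m(1-d)$. This yields the desired decomposition of any $M \in \D^b(A_d)$ into shifts of the $B_n$'s. The left-module statement follows by repeating the argument with $A_d^{\mathrm{op}}$, replacing $B_n$ by $C_n$ throughout. For the compactness assertion, a finite direct sum is compact if and only if each summand is, so it suffices to check each $B_n$ individually: for $n \geq 1$, $B_n$ is by construction an iterated extension of copies of $A_d$ and hence lies in the thick subcategory generated by $A_d$; for $n = 0$, the minimal $A_d$-free resolution
\[
\cdots \xrightarrow{\epsilon} A_d \xrightarrow{\epsilon} A_d \xrightarrow{\epsilon} A_d \to K
\]
is infinite, so $K = B_0$ is not perfect and hence not compact in $\D(A_d)$. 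Since compact objects are closed under direct summands, $M$ is compact if and only if no $B_0$ summand appears in its decomposition.

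The main obstacle is the formality step at the dg-module level: although $K[q]$ is graded hereditary as an ordinary graded algebra, one has to check carefully that the dg-truncations of an arbitrary $X \in \D(K[q])^c$ genuinely split in the derived category of dg-modules, i.e.\ that the relevant $\Ext^2$-vanishing takes place where the obstruction class lives. Once this formality is in hand, the rest is a mechanical combination of the graded structure theorem for $K[q]$-modules with the Koszul-duality dictionary already set up in the excerpt.
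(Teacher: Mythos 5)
Your overall strategy --- transfer across the Koszul duality $\Phi\colon\D(K[q])^c\to\D^b(A_d)$, classify over the graded PID $K[q]$, translate back via the identifications of $\Phi(K[q])$ and $\Phi(K[q]/q^m)$, and settle compactness using closure under direct summands --- is exactly the paper's. Your compactness discussion is also fine in substance (the paper rules out $B_0$ by the infinite-dimensionality of its derived endomorphism algebra rather than by minimal resolutions, but either works).

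The formality step, which you rightly single out as the main obstacle, is however not correct as you describe it. Since $\deg q=1-d\neq0$, the internal grading of $K[q]$ and the cohomological grading of a dg-module coincide: multiplication by $q$ shifts cohomological degree. Hence $H^n(X)$ placed in degree $n$ is not a $K[q]$-module, the decomposition $\bigoplus_n H^n(X)[-n]$ does not make sense $K[q]$-linearly, and the connecting maps of the truncation triangles land in groups built from $\Ext^1_{K[q]}$ of graded modules (in shifted cohomological degrees), not from $\Ext^{\geq2}$. They are genuinely nonzero: already for $X=K[q]$ with $d=2$ the truncation $\tau_{\le-1}K[q]=qK[q]\hookrightarrow K[q]$ has quotient $K$ and the resulting triangle cannot split, as $K[q]$ is indecomposable in $\D(K[q])$ while having cohomology in infinitely many degrees. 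The statement you actually need is that $X$ is quasi-isomorphic to its \emph{total} cohomology $H^\bullet(X)$ with zero differential; this is a formality assertion for dg-modules which the hereditary-abelian-category truncation argument does not deliver. The paper avoids any general formality principle by an explicit construction you should adopt: decompose the graded module $H^\bullet(M')$ as $\bigoplus_i K[q][s_i]\oplus\bigoplus_j K[q]/q^{n_j}[m_j]$, then lift each summand to a closed map of dg-modules into $M'$ --- a free summand by choosing a cocycle representing its generator, and a torsion summand $K[q]/q^{n}$ by a map from the two-term free complex $\bigl(K[q][-n(1-d)]\xrightarrow{q^{n}}K[q]\bigr)$ determined by elements $m,m'\in M'$ with $d(m)=0$ and $q^{n}m=d(m')$. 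The sum of these maps induces the chosen isomorphism on cohomology and is therefore a quasi-isomorphism; this is exactly where the length-$\le1$ free resolutions over the graded PID $K[q]$ are used, and it is the step missing from your write-up.
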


\begin{rem}
    The above statement pairs up with \cite[Proposition 2.2]{KS-2022} to show why $\D(A_d)^{c}(\subsetneq\D^b(A_d))$ is generated by the $A_d$ as a triangulated category (without additional idempotent completion).
\end{rem}

\begin{pf}
We prove the statement for right modules; this will suffice because $A_d$ is commutative.

Let us fix $M \in \D^b(A_d)$ and $M' \in \D(K[q])^c$ such that $\Phi(M') \simeq M$.
Notice that if we forget the grading, then $K[q]$ is a PID.
Hence, the cohomology of the dg-module $M'$ splits as a finite direct sum
\begin{equation}
    \label{eqn:cohomology-split}
H^{\bullet}(M')\simeq K[q][s_1]\oplus\cdots\oplus K[q][s_t]\oplus K[q]/q^{n_1}[m_1]\oplus\cdots\oplus K[q]/q^{n_r}[m_r]
\end{equation}
for some $n_r\ge0,~m_i,s_i\in\ZZ$. 

Let us write $F_{m}$ for the convolution of \eqref{eqn:extension-K[q]}.
Then, notice that $F_m$ is a free resolution of $K[q]/q^{m}$.
Hence, replacing $K[q]/q^{n_i}$ with $F_{n_i}$ in \eqref{eqn:cohomology-split}, we can lift the isomorphism \eqref{eqn:cohomology-split} to a quasi-isomorphism\footnote{In slightly more detail: if $K[q]/q^{n_i}$ appears in \eqref{eqn:cohomology-split}, then it means that there exists $m \in M'$ such that $d(m) = 0$ and $q^{n_i} m = d(m')$ for some $m' \in M$. Then, we define $F_{n_i} \rightarrow M'$ by sending $(1,0), (0,1) \in F_{n_i}$ to $m$ and $m'$, respectively. Here we employed the notation we introduced in § \ref{convolution-twisted-complexes} for elements belonging to the convolution of a twisted complex.}
\begin{equation}
    \label{eqn:dec-M-structure-A-d-mod}
    M'\simeq K[q][s_1]\oplus\cdots\oplus K[q][s_t]\oplus F_{n_1}[m_1]\oplus\cdots\oplus F_{n_r}[m_r]
\end{equation}
Applying $\Phi$ to \eqref{eqn:dec-M-structure-A-d-mod} and using the isomorphisms $\Phi(\eqref{eqn:extension-K[q]}) \simeq \eqref{eqn:extension-A-d-finite}[-m(1-d) + 1]$ and $\Phi(K[q]) \simeq B_0$, we obtain the sought decomposition
\[
    M\simeq\Phi(M')\simeq \bigoplus^t_{i=1}B_0[s_i]\oplus\bigoplus^r_{j=1}B_{n_j}[m_j -n_j(1-d) + 1]. 
\]

Finally, to prove the claim about the compactness of $M$ recall that $\D(A_d)^c$ is closed under taking direct summands and notice that of the $B_n$'s the only non-compact one is $B_0$ (its derived endomorphism algebra is infinite dimensional, see e.g. \cite[Lemma 3.4]{KS-2022}).
\qed
\end{pf}

%---------------------------------------------------------
\subsection{Proof}

We are now ready to prove Theorem \ref{fund-ineq-k-weak-thm}.
We only need one last definition: 

\begin{dfn}
Given $M,N$ two dg-modules over the graded dual numbers $A_d$ and $n\geq0$, we define $(M\otimes_{A_d} N)_n$ as the convolution of the twisted complex
\[
    M \otimes N[-nd] \xrightarrow{d_n} \cdots \xrightarrow{d_3} M \otimes N[-d] \xrightarrow{d_2} M\otimes N, 
\]
where $d_k(m\otimes n):=m\epsilon\otimes n+(-1)^km\otimes \epsilon n$.  
\end{dfn}

\begin{rem}
    \label{rem:independence-tensor-prod}
    The construction $(M \otimes_{A_d} N)_n$ is independent of the quasi-isomorphism classes of $M$ and $N$.
    Indeed, this follows easily from the definition of $(M \otimes_{A_d} N)_n$ and e.g. \cite[Corollary 2.12]{AL-Bar-Cat}.
\end{rem}

We state Theorem \ref{fund-ineq-k-weak-thm} again: 

\begin{thm}
    Let $E\in\D(\A)^c$ be a spherical object and $M,N\in\D(\A)$ objects such that $i(E,M)$ and $i(E,N)<\infty$. 
    For any $k\in\ZZ\backslash\{0\}$, we have
    \begin{equation}\label{fund-ineq-k-weak-appendix}
    i(E,M)i(E,N)\le
    i(T^k_EM,N)+i(M,N).
    \end{equation}
\end{thm}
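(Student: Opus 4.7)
The plan is to reduce the inequality to a cohomological lower bound on a tensor product of finite-dimensional $A_d$-dg-modules, and then to verify that estimate using the classification from Proposition \ref{class-fin-dim-mod}.

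First I would use that $T_E$ is an autoequivalence (so $i(T^k_E M, N) = i(M, T^{-k}_E N)$) together with Serre duality $i(X, Y) = i(Y, \mathcal{S}(X))$ and $\mathcal{S}(E) \simeq E[d]$ to reduce to a single model inequality. More precisely, for $k \le -1$ the identity turns the statement into one involving $T^{-k}_E N$ with $-k \ge 1$, and the exact triangle $C_{-k}(E, N) \to N \to T^{-k}_E N$ furnished by the twisted-complex representation \eqref{tot-sph-power}, combined with $\RHom(M, -)$, gives in $\D(K)$
\[
    i(M, C_{-k}(E, N)) \le i(M, N) + i(M, T^{-k}_E N).
\]
The case $k \ge 1$ is handled symmetrically: the triangle $C_k(E, M) \to M \to T^k_E M$ together with $\RHom(-, N)$ yields $i(C_k(E, M), N) \le i(T^k_E M, N) + i(M, N)$, and Serre duality for the compact object $C_k(E, M)$ (using $\mathcal{S}(C_k(E,M)) \simeq C_k(E,M)[d]$) identifies $i(C_k(E,M),N)$ with $i(N, C_k(E,M))$. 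In either case the theorem reduces to the lower bound
\[
    i(M, C_n(E, N)) \;\ge\; i(E, M)\, i(E, N) \qquad (n \ge 1),
\]
up to the harmless swap $M \leftrightarrow N$.

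Next, I would apply Proposition \ref{good-rep-hom} to identify $\RHom(M, C_n(E, N))$ with the convolution of a twisted complex whose components are $\RHom(E, N) \otimes \RHom(M, E)$ and whose differentials are built from pre-/post-composition with $\alpha_\epsilon$. Setting $V := \RHom(E, N)$ and $W := \RHom(M, E)$, this is by inspection precisely the construction $(V \otimes_{A_d} W)_{n-1}$ introduced just before the theorem, up to a global shift that does not affect total dimensions. The finiteness hypotheses ensure that $V$ and $W$ are finite-dimensional $A_d$-dg-modules, so Proposition \ref{class-fin-dim-mod} decomposes each as a finite direct sum of shifted indecomposables $B_{a_i}$ and $C_{b_j}$ respectively. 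By bilinearity of $(-\otimes_{A_d}-)_{n-1}$ (Remark \ref{rem:independence-tensor-prod}) and shift-invariance of total dimension, the problem reduces to the pointwise estimate
\[
    \dim_K H^\bullet\bigl((B_a \otimes_{A_d} C_b)_{n-1}\bigr) \;\ge\; \dim B_a \cdot \dim C_b
\]
for all $a, b \ge 0$ and $n \ge 1$.

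This last estimate is the main obstacle. Since $\dim B_0 = 1$ and $\dim B_m = 2$ for $m \ge 1$ (and analogously for $C_m$), one needs to exhibit at least $1$, $2$, or $4$ linearly independent cohomology classes depending on which of $a, b$ vanish. The case $a = b = 0$ is immediate: $\epsilon$ acts by zero on $K$, so every differential in $(K \otimes_{A_d} K)_{n-1}$ vanishes and the total cohomology has dimension $n \ge 1$. The two mixed cases reduce to reading off the cohomology of an iterated $\epsilon$-complex. The decisive case is $a, b \ge 1$, where the alternating-sign structure $d_j(m \otimes n) = m\epsilon \otimes n + (-1)^j m \otimes \epsilon n$ is essential: by exhibiting four explicit cocycles built from the top and bottom generators of the defining complexes of $B_a$ and $C_b$, suitably distributed among the $n$ summands of $(B_a \otimes_{A_d} C_b)_{n-1}$, one shows they remain linearly independent modulo boundaries, in the spirit of the analogous calculation in \cite[Section 7]{Kea}.
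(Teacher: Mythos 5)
Your proposal follows essentially the same route as the paper: reduce via the distinguished triangle $C_n(E,N)\to N\to T_E^nN$ and Proposition \ref{good-rep-hom} to the lower bound $\dim_K H^{\bullet}\bigl((V\otimes_{A_d}W)_{\ast}\bigr)\ge \dim_K H^{\bullet}(V)\cdot\dim_K H^{\bullet}(W)$, then decompose $V,W$ by Proposition \ref{class-fin-dim-mod} and check the four indecomposable cases by exhibiting explicit cocycles. Two remarks. First, the paper disposes of the sign of $k$ simply by substituting $M\mapsto T_E^{-k}M$ (which leaves $i(E,M)$ unchanged), whereas you handle $k\ge 1$ by a symmetric argument plus Serre duality on $C_k(E,M)$; this works, but it silently uses $\mathcal{S}(C_k(E,M))\simeq C_k(E,M)[d]$, which needs a (short) justification since $\mathcal{S}$ only rescales $\alpha_{\epsilon}$ -- the substitution trick avoids this entirely. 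Second, the case $T_1=B_r$, $T_2=C_p$, where four linearly independent classes must be produced and shown to survive modulo boundaries, is exactly where the paper's proof does its real computational work (it writes the four representatives down, one of which is a signed combination of two pure tensors); your appeal to ``the spirit of Keating'' leaves that verification unexecuted, so as written the decisive estimate is asserted rather than proved.
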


\begin{pf}
%In the course of the proof, we consider $B_r$ and $C_p$ only for $p, r \geq 1$.
By replacing $M$ by $T_E^{-k}M$, it suffices to prove the claim for $k<0$. 
%Let $m:=-k>0$. 
The case $k=-1$ is proved by direct computations and using $i(\Hom^{\bullet}(E,T_E^{-1}M)\otimes E,N)=i(E,M)i(E,N)$, so we can assume $-k>1$. 

By the definition of $C_{-k}(E,N)$, see \eqref{eqn:c-n-e-n}, we have the following
distinguished triangle
\[
C_{-k}(E,N)\xrightarrow{ev}N\rightarrow T_E^{-k}N. 
\]
Applying $\RHom_\A(M,-)$ to this distinguished triangle, as $i(M,T^{-k}_E N)=i(T^k_E M,N)$ and intersection numbers are subadditive on distinguished triangles, we see that it is enough to prove
\[
\dim_K\RHom_\A(M,C_{-k}(E,N))\ge i(M,E)i(E,N)(=i(E,M)i(E,N)). 
\]
By Proposition \ref{good-rep-hom} we know that
\[
\RHom_\A(M,C_{-k}(E,N))\simeq\left(\RHom_\A(E,N) \otimes_{A_d}\RHom_\A(M,E)\right)_{-k}.
\]
Moreover, by Remark \ref{rem:independence-tensor-prod} we know that we can replace $\RHom_\A(E,N)$ and $\RHom_\A(M,E)$ by quasi-isomorphic $A_d$-dg-modules. 
As $i(E,M)$ and $i(E,N)<\infty$, by Proposition \ref{class-fin-dim-mod} we know that $\RHom_\A(E,N)$ and $\RHom_\A(M,E)$ split as direct sums of shifts of $K$ and $B_r$'s, respectively $C_p$'s, for $r,p\ge1$. 
Hence, to conclude it is enough to prove that
\begin{equation}
    \label{red-ineq}
    \dim_K H^{\bullet}((T_1\otimes_AT_2)_{-k})\ge\dim_K H^{\bullet}(T_1)\cdot\dim_K H^{\bullet}(T_2)
\end{equation}
for $T_1\in\{K,B_r\},~T_2\in\{K,C_p\}$. 

If $T_1=T_2=K$, 
then $(K\otimes_{A_d} K)_{-k}$ is a direct sum of shifts of $K$, and therefore (\ref{red-ineq}) becomes $-k\ge1$, which is true.

If $T_1=K$ and $T_2=C_p$, then the right hand side of (\ref{red-ineq}) is equal to $\dim_K H^{\bullet}(C_p)=2$. 
Hence, we have to find two non-zero cohomology classes in  $(K\otimes_A C_p)_{-k}$.
We claim that the sought classes as given by
\[
    \begin{array}{lcr}
        (0, 0, \dots, 0, 1 \otimes 1) & \mathrm{and} & (1 \otimes (\epsilon, 0, 0, \dots, 0), 0, \dots, 0))
    \end{array}
\]
where we employed the notation we introduced in § \ref{convolution-twisted-complexes} for elements belonging to the convolution of a twisted complex.

It is obvious that $(0, 0, \dots, 0, 1 \otimes 1)$ is closed.
For $(1 \otimes (\epsilon, 0, 0, \dots, 0), 0, \dots, 0)$, it follows from the fact that $1$ and $(\epsilon, 0, 0, \dots, 0)$ are closed in $K$ and $C_p$, respectively, and the definition of the differential in $(K\otimes_A C_p)_{-k}$:
\[
\begin{aligned}
        & d((1 \otimes (\epsilon, 0, 0, \dots, 0), 0, \dots, 0)))\\
        = \; & (d(1 \otimes (\epsilon, 0, 0, \dots, 0)), 1 \cdot \epsilon \otimes (\epsilon, 0, 0, \dots, 0) + (-1)^{k} 1 \otimes (\epsilon^2, 0, 0, \dots, 0), 0, \dots, 0) \\
        = \; & (0, \dots, 0).
\end{aligned}
\]

We prove that $(0, 0, \dots, 0, 1 \otimes 1)$ gives a non-zero cohomology class, the proof for $(1 \otimes (\epsilon, 0, 0, \dots, 0), 0, \dots, 0)$ is analogous.
The element $1 \otimes 1$ is a non-zero cohomology class in $K \otimes C_p$.
Hence, if $(0, 0, \dots, 0, 1 \otimes 1)$ is the differential of some element, it must be the differential of an element of the form $(0, \dots, 0, a_{-1} \otimes b_{-1}, a_0 \otimes b_0)$ with $a_{-1} \otimes b_{-1} \neq 0$.
The differential of such an element is given by (we can assume we always have $1$ on the left because the tensor products are $K$-linear)
\begin{equation}
    \label{eqn:diff-element-to-1-1}
    d((0, \dots, 0, 1 \otimes b_{-1}, 1 \otimes b_0)) = (0, \dots, 0, 1 \otimes d(b_{-1}), 1 \otimes d(b_0) - 1 \otimes \epsilon b_{-1}).
\end{equation}
For \eqref{eqn:diff-element-to-1-1} to be equal to $(0, 0, \dots, 0, 1 \otimes 1)$, we must have
\[
    1 \otimes d(b_0) - 1 \otimes \epsilon b_{-1} = 1 \otimes 1 \iff d(b_0) - \epsilon b_{-1} = 1.
\]
However, by the definition of $C_p$, the equation $d(b_0) - \epsilon b_{-1} = 1$ is not satisfied by any $b_0, b_{-1} \in C_p$, and therefore $1 \otimes 1$ is a non-zero cohomology class.
Hence, we get $\dim_K H^{\bullet}((K\otimes_A C_p)_{-k}) \geq 2$, as we wanted.

If $T_1=B_r$ and $T_2=K$, the situation is as in the previous point. 

If $T_1=B_r$ and $T_2=C_p$, the right hand side of (\ref{red-ineq}) is equal to 4. 
The four elements which give rise to non-zero cohomology classes are given by
\[
    \begin{array}{c}
        (0, \dots, 0, 1 \otimes 1)\\
        ((0, \dots, 0, \epsilon) \otimes (0, \dots, 0, \epsilon), 0, \dots, 0)\\
        ((0, \dots, 0, \epsilon) \otimes (0, \dots, 0, 1), 0, \dots, 0) + (-1)^{k+1} ((0, \dots, 0, 1) \otimes (0, \dots, 0, \epsilon), 0, \dots, 0)\\
        ((\epsilon, 0, \dots, 0) \otimes (\epsilon, 0, \dots, 0), 0, \dots, 0)
    \end{array}
\]

We have exhausted all the possible cases for \eqref{red-ineq}, and therefore we have concluded the proof of the theorem.
\qed
\end{pf}

\begin{rem}
    As mentioned in \S1.3, 
    using different techniques, a particular case of the above theorem was proved by Volkov in \cite[Lemma 3.3]{Volk}.
\end{rem}

%
\begin{comment}
\begin{lem}
For a spherical object $E\in\D^b(X)$, objects $M,N\in\D^b(X)$ and $k=\pm1$, we have
\begin{equation}\label{fund-ineq-pm1}
\left|i(T^k_EM,N)-|k|i(E,M)i(E,N)\right|\le i(M,N).
%基本不等式のk=±1の場合
\end{equation}
\end{lem}
\begin{pf}
We prove only the case of $k=1$. 
Applying $\Hom(-,N)$ to the exact triangle (\ref{sph-tri}),
we have
$i(T_EM,N)\le i(M,N)+i(\Hom^{\bullet}(E,M)\otimes E,N)$. 
It is easy to check that 
$i(\Hom^{\bullet}(E,M)\otimes E,N)=i(E,M)i(E,N)$. 
The remaining inequality is similarly proved. 
\qed
\end{pf}
\end{comment}

%%%%%%%%%%%%%%%%%%%%%%%%%%%%%%%%%%%%%%%%%%%%%%%%%%%%%%%%%%
\section{Preliminaries on K3 surfaces}
In this section, we prepare basic properties of the autoequivalence groups of derived categories of K3 surfaces for the computations of the center groups in Section \ref{section-center}. 

Let $X$ be a K3 surface. 
%---------------------------------------------------------
\subsection{Hodge structures on Mukai lattices}
The integral cohomology group $H^*(X,\ZZ)$ of $X$ has the lattice structure given by the Mukai pairing 
\begin{equation*}\label{Mukai pairing}
((r_1,c_1,s_1), (r_2,c_2,s_2)):=c_1 \cdot c_2-r_1s_2-r_2s_1
\end{equation*}
for $(r_1, c_1, s_1), (r_2,c_2,s_2) \in H^*(X,\ZZ)$.
The lattice $H^*(X,\ZZ)$ called the {\it Mukai lattice} of $X$ is an even unimodular lattice of signature $(4,20)$.
The Mukai lattice has a weight two Hodge structure $\widetilde{H}(X,\ZZ)$ 
%underlying $H^*(X,\ZZ)$, 
given by
$\widetilde{H}(X,\ZZ)\otimes_\ZZ \CC=\bigoplus_{p+q=2}\widetilde{H}^{p,q}(X)$ and
\begin{equation*}
\widetilde{H}^{2,0}(X):=H^{2,0}(X),~
\widetilde{H}^{1,1}(X):=\bigoplus_{p=0}^{2}H^{p,p}(X),~
\widetilde{H}^{0,2}(X):=H^{0,2}(X).
\end{equation*}
This Hodge structure contains the ordinary Hodge structure on $H^2(X,\ZZ)$ as a primitive sub-Hodge structure. 
The algebraic part of $\widetilde{H}(X,\ZZ)$
denoted by $\NN(X)$, %or $\widetilde{H}^{1,1}(X,\ZZ)$ 
is equal to $H^0(X,\ZZ)\oplus \mathrm{NS}(X)\oplus H^4(X,\ZZ)$
and has signature $(2, \rho(X))$. 
%where $\mathrm{NS}(X)$ is the N\'eron-Severi group of $X$.
%The lattice $\NN(X)$ is also called the {\it (algebraic) Mukai lattice}. 

For an object $E \in \D^b(X)$, the {\it Mukai vector} $v(E) \in H^{2*}(X,\QQ)$
of $E$ is given by
\[
v(E)
:=\ch(E)\sqrt{\mathrm{td}_X}
=(\mathrm{rk}(E), c_1(E), \chi(E)-\mathrm{rk}(E)).
\]
By the Riemann--Roch formula, we have the isomorphism $v:\N(\D^b(X)) \iso \NN(X)$ satisfying
$(v(E),v(F))=-\chi(E,F)$ for any objects $E,F \in \D^b(X)$, 
where $\N(\D^b(X))$ is the numerical Grothendieck group of $\D^b(X)$ and $\chi$ is the Euler pairing on it. 

%---------------------------------------------------------
\subsection{Groups}
A {\it Hodge isometry} $\varphi: \widetilde{H}(X,\ZZ)\to \widetilde{H}(X,\ZZ)$  of $\widetilde{H}(X,\ZZ)$ is an isomorphism of the Hodge structure preserving the Mukai pairing. 
The group of Hodge isometries is denoted by $\mathrm{O}(\widetilde{H}(X,\ZZ))$. 
Let $T(X)$ be the transcendental lattice of $X$ which is the transcendental part of the Hodge structures $\widetilde{H}(X,\ZZ)$ and $H^2(X,\ZZ)$. 
Restricting the Hodge structure $\widetilde{H}(X,\ZZ)$ with the Mukai pairing to sub-Hodge structures $H^2(X,\ZZ),{\rm NS}(X)$ and $T(X)$, 
we similarly define the groups of Hodge isometries $\mathrm{O}(H^2(X,\ZZ)),\mathrm{O}({\rm NS}(X))$ and $\mathrm{O}(T(X))$, respectively. 
Then $\mathrm{O}(T(X))$ is a finite cyclic group, 
%(cf \cite[Lemma {\bf 3}.3.3 and Corollary {\bf 3}.3.4]{HuyK3}), %especially abelian. 
and faithfully acts on $H^{2,0}(X)\simeq\CC$ by a root of unity. 

Using the action of $\Aut(X)$ on $H^2(X,\ZZ)$, 
the following two groups are defined 
\begin{eqnarray*}
\Aut_s(X)
&:=&\left\{f\in\Aut(X)~\middle|~ H^2(f)|_{H^{2,0}(X)}=\mathrm{id}_{H^{2,0}(X)} \right\}\\
&=&\left\{f\in\Aut(X)~\middle|~ H^2(f)|_{T(X)}=\mathrm{id}_{T(X)} \right\}\\
\Aut_t(X)&:=&
\left\{f\in\Aut(X)~\middle|~ H^2(f)|_{{\rm NS}(X)}=\mathrm{id}_{{\rm NS}(X)} \right\}. 
\end{eqnarray*}
These two subgroups are normal. 
The group $\Aut_s(X)\subset\Aut(X)$ is of finite index, 
and its element is called a {\it symplectic automorphism}. 
The natural group homomorphisms 
%\Aut(X)\hookrightarrow \mathrm{O}(H^2(X,\ZZ)).
\[
\Aut_s(X)\rightarrow \mathrm{O}(\mathrm{NS}(X))
~~~~~\text{  and }~~~~~
\Aut_t(X)\rightarrow \mathrm{O}(T(X)).
\]
are injective, thus $\Aut_t(X)$ is also a finite cyclic group. 

For any autoequivalence $\Phi_\E\in\Aut(\D^b(X))$, 
we define the {\it cohomological Fourier--Mukai transform} $\Phi^H_{\E}:H^{*}(X,\ZZ) \iso H^{*}(X,\ZZ)$ associated to $\Phi_\E$ by
\[
\Phi^H_\E(v):=p_*(q^*(v)\cdot v(\E)), 
\]
which is a Hodge isometry of $\widetilde{H}(X,\ZZ)$, 
thus induces the action 
\[
\Aut(\D^b(X))\rightarrow \mathrm{O}(\widetilde{H}(X,\ZZ)). 
%\to \mathrm{O}(T(X)). 
\]
The two subgroups $\Aut_0(\D^b(X))$ and $\Aut_{{\rm CY}}(\D^b(X))$ of $\Aut(\D^b(X))$ are defined by
\begin{eqnarray*}
\Aut_0(\D^b(X))&:=&
\left\{\Phi\in\Aut(\D^b(X))~\middle|~
\Phi^H=\mathrm{id}_{H^*(X,\ZZ)} \right\}\\
\Aut_{{\rm CY}}(\D^b(X))
&:=&\left\{\Phi\in\Aut(\D^b(X))~\middle|~ \Phi^H|_{H^{2,0}(X)}=\mathrm{id}_{H^{2,0}(X)} \right\}\\
&=&\left\{\Phi \in \Aut(\D^b(X)) ~\middle|~ \Phi^H|_{T(X)}=\mathrm{id}_{T(X)} \right\}. 
\end{eqnarray*}
These two subgroups are normal, and clearly $\Aut_0(\D^b(X))\subset\Aut_{{\rm CY}}(\D^b(X))$. 
%Let $\Aut^0(\D^b(X))$ be the kernel of $\Aut(\D^b(X))\to\Aut(HH_*(X))$. 
The group $\Aut_{{\rm CY}}(\D^b(X))\subset\Aut(\D^b(X))$ is of finite index, 
and its element is called a {\it Calabi--Yau autoequivalence}.
We note that $\Phi\in\Aut(\D^b(X))$ is Calabi--Yau if and only if 
it respects the Serre duality pairings
\[
\Hom(E,F[i])\times\Hom(F,E[2-i])\to\CC
\]
induced by a choice of holomorphic volume forms in $H^{2,0}(X)$, see \cite[Appendix A]{BB}. 

We recall the definition of centralizer and center groups. 
\begin{dfn}\label{def-center}
Let $G$ be a group, and $H$ a subgroup of $G$. 
\begin{enumerate}
\item
The {\it centralizer group} $C_G(H)$ of $H$ in $G$ is defined by
\[
C_G(H):=\left\{g\in G \mid gh=hg\text{ for all }h\in H\right\}.
\]
\item
The {\it center group} $Z(G)$ of $G$ is defined by
\[
Z(G):=C_G(G). %(\subset C_G(H))
%\{g\in G~|~gh=hg\text{ for any }h\in G\}.
\]
\end{enumerate}
\end{dfn}
It is easy to see $\Aut_t(X)\subset Z(\Aut(X))$, 
but these are not equal in general. 

%---------------------------------------------------------
\subsection{Stability conditions on $\D^b(X)$}

%In this subsection, we give the definition of Bridgeland stability conditions and basic properties of the canonical metric on the space of stability conditions. 

%In this subsection, 
We review the space of Bridgeland stability conditions on the derived categories of K3 surfaces and the action on it of the autoequivalence group. 

Let $X$ be a K3 surface and 
fix a norm $||\cdot||$ on $\N(\D^b(X))\otimes \RR$. 
\begin{dfn}[{\cite[Definition 5.1]{Bri1}}]%def of stab cond
A {\rm (numerical) stability condition} $\sigma=(Z,\P)$ on $\D^b(X)$ consists of a group homomorphism
$Z: \N(\D^b(X))\to\CC$ called {\it central charge} and a family $\P=\{\P(\phi)\}_{\phi\in\RR}$ of full additive subcategory of $\D^b(X)$ called {\it slicing}, such that
\begin{enumerate}
\item
For $0\neq E\in\P(\phi)$, we have $Z(v(E))=m(E)\exp(i\pi\phi)$ for some $m(E)\in\RR_{>0}$. 
\item
For all $\phi\in\RR$, we have $\P(\phi+1)=\P(\phi)[1]$.
\item
For $\phi_1>\phi_2$ and $E_i\in\P(\phi_i)$, we have $\Hom(E_1,E_2)= 0$.
\item
For each $0\neq E\in\D^b(X)$, there is a collection of exact triangles called {\it Harder--Narasimhan filtration} of $E$: 
\begin{equation}\label{HN}
\begin{xy}
(0,5) *{0=E_0}="0", (20,5)*{E_{1}}="1", (30,5)*{\dots}, (40,5)*{E_{p-1}}="k-1", (60,5)*{E_p=E}="k",
(10,-5)*{A_{1}}="n1", (30,-5)*{\dots}, (50,-5)*{A_{p}}="nk",
\ar "0"; "1"
\ar "1"; "n1"
\ar@{.>} "n1";"0"
\ar "k-1"; "k" 
\ar "k"; "nk"
\ar@{.>} "nk";"k-1"
\end{xy}
\end{equation}
with $A_i\in\P(\phi_i)$ and $\phi_1>\phi_2>\cdots>\phi_p$. 
\item
(support property)
%We fix a norm $||\cdot||$ on $\Lambda\otimes_\ZZ \RR$. 
There exists a constant $C>0$ such that for all $0\neq E\in\P(\phi)$, 
we have 
\begin{equation}
||E||<C|Z(E)|.
\end{equation}
\end{enumerate}
\end{dfn}
For any interval $I\subset\RR$, define $\P(I)$ to be the extension-closed subcategory of $\D^b(X)$ generated by the subcategories $\P(\phi)$ for $\phi\in I$. 
Then $\P((0,1])$ is the heart of a bounded t-structure on $\D^b(X)$, hence an abelian category.  
The full subcategory $\P(\phi)\subset\D^b(X)$ is also shown to be abelian. 
A non-zero object $E\in\P(\phi)$ is called {\it $\sigma$-semistable} of {\it phase} $\phi_\sigma(E):=\phi$, and especially a simple object in $\P(\phi)$ is called {\it $\sigma$-stable}. 
Taking the Harder--Narasimhan filtration (\ref{HN}) of $E$, we define $\phi^+_\sigma(E):=\phi_\sigma(A_1)$ and $\phi^-_\sigma(E):=\phi_\sigma(A_p)$. 
The object $A_i$ is called {\it $\sigma$-semistable factor} of $E$. 
Define ${\rm Stab}(X)$ to be the set of numerical stability conditions on $\D^b(X)$. 

%---------------------------------------------------------
We prepare some terminologies on the stability on the heart of a $t$-structure on $\D^b(X)$. 
\begin{dfn}%def of stability function
Let $\A$ be the heart of a bounded $t$-structure on $\D^b(X)$. 
%and $K(\A)(\simeq\hspace{-0.2mm}K(\D^b(X)))$ be its Grothendieck group. 
A {\it stability function} on $\A$ is a group homomorphism $Z: \N(\D^b(X))\to\CC$ such that for all $0\neq E\in\A\subset\D^b(X)$, the complex number $Z(E)$ lies in the semiclosed upper half plane ${\mathbb H}_{-}:=\{re^{i\pi\phi}\in\CC~|~r\in\RR_{>0},\phi\in(0,1]\}\subset\CC$. 
\end{dfn}
%We note that the heart of a bounded $t$-structure is an abelian category. 
Given a stability function $Z: \N(\D^b(X))\to\CC$ on $\A$, the {\it phase} of an object $0\neq E\in\A$ is defined to be $\phi(E):=\frac{1}{\pi}{\rm arg}Z(E)\in(0,1]$. 
An object $0\neq E\in\A$ is {\it $Z$-semistable} (resp. {\it $Z$-stable}) if for all subobjects $0\neq A\subset E$, we have $\phi(A)\le\phi(E)$ (resp. $\phi(A)<\phi(E)$). 
We say that a stability function $Z$ satisfies {\it the Harder--Narasimhan property} if 
each object $0\neq E\in\A$ admits a filtration (called Harder--Narasimhan filtration of $E$) 
$0=E_0\subset E_1\subset E_2\subset\cdots\subset E_m=E$ such that $E_i/E_{i-1}$ is $Z$-semistable for $i=1,\cdots,m$ with $\phi(E_1/E_0)>\phi(E_2/E_1)>\cdots>\phi(E_m/E_{m-1})$, 
and {\it the support property} if there exists a constant $C>0$ such that for all $Z$-semistable objects $E\in\A$, 
we have $||E||<C|Z(E)|$. 

The following proposition shows the relationship between stability conditions and stability functions on the heart of a bounded $t$-structure. 
\begin{prop}[{\cite[Proposition 5.3]{Bri1}}]
To give a stability condition on $\D^b(X)$ is equivalent to giving the heart $\A$ of a bounded t-structure on $\D^b(X)$, and a stability function $Z$ on $\A$ with the Harder--Narasimhan property and the support property. 
\end{prop}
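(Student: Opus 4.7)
The plan is to establish the equivalence by constructing two mutually inverse maps: one takes a stability condition $\sigma = (Z, \P)$ to a pair $(\A, Z)$ consisting of the heart of a bounded $t$-structure together with a stability function on it satisfying the Harder--Narasimhan and support properties, and one goes in the opposite direction.

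For the forward direction, I would set $\A := \P((0,1])$ and first verify that this is the heart of a bounded $t$-structure on $\D^b(X)$. Orthogonality of $\A[n]$ and $\A$ for $n \ge 1$ follows from axiom (iii) combined with the shift property (ii), and the existence of a bounded $t$-structure filtration for any $E \in \D^b(X)$ is extracted from the Harder--Narasimhan property (iv) by grouping the semistable factors of $E$ according to the ceiling of their phase. Restricting $Z$ to $\A$ gives a stability function: any $0 \neq E \in \P(\phi)$ with $\phi \in (0,1]$ satisfies $Z(E) = m(E) e^{i\pi\phi} \in \mathbb{H}_-$ by axiom (i), and for a general object of $\A$ one reads off $Z$ as a sum of such terms from its HN factors. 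The Harder--Narasimhan property for $Z$ on $\A$ is induced from the one on $\D^b(X)$, and the support property transfers directly.

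For the backward direction, given $(\A, Z)$, I would define, for $\phi \in (0,1]$, $\P(\phi)$ to be the full additive subcategory of $\A$ consisting of the zero object together with the $Z$-semistable objects of phase $\phi$, and extend to all $\phi \in \RR$ by the rule $\P(\phi + n) := \P(\phi)[n]$. Axioms (i), (ii), and (v) are immediate from the construction and from the hypotheses on $Z$. Axiom (iii) combines the vanishing of $\Hom$ between $Z$-semistable objects of $\A$ with decreasing phase (which is a Schur-type argument using the stability function) with the $t$-structure vanishing $\Hom_{\D^b(X)}(\A, \A[-n]) = 0$ for $n > 0$ in order to treat semistable objects whose phases sit in different unit intervals.

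The principal obstacle will be axiom (iv), the existence of an HN filtration for an arbitrary $E \in \D^b(X)$ rather than just for an object of $\A$. The natural strategy is a two-step refinement: because $\A$ is the heart of a bounded $t$-structure, $E$ has a finite cohomological filtration by shifts $H^i(E)[-i]$ with $H^i(E) \in \A$, and applying the Harder--Narasimhan property of $Z$ on $\A$ to each $H^i(E)$ refines this into a finer filtration whose graded pieces are $Z$-semistable in $\A$ up to shift. The key verification is that the resulting semistable factor in cohomological degree $i$ has phase in $(-i, 1-i]$, so strictly decreasing phases across different $i$ are automatic, and the filtration can be concatenated in the correct order. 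Once this is in place, checking that the two constructions are mutually inverse is routine, completing the proof.
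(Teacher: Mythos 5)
Your proposal is correct and follows exactly the route the paper indicates: the paper's own (very brief) proof sketch consists precisely of the two mutually inverse constructions you describe, namely $\A:=\P((0,1])$ in one direction and $\P(\phi):=\{E\in\A \mid E \text{ is } Z\text{-semistable with } \phi(E)=\phi\}$ for $\phi\in(0,1]$, extended by $\P(\phi+1)=\P(\phi)[1]$, in the other. Your treatment of axiom (iv) via the cohomological filtration refined by the Harder--Narasimhan filtrations of the $H^i(E)$, with phases landing in $(-i,1-i]$, is the standard Bridgeland argument and is sound.
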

For the proof, we construct the slicing $\P$, from the pair $(Z,\A)$, by 
\[
\P(\phi):=\{E\in\A~|~E\text{ is }Z\text{-semistable with }\phi(E)=\phi\}\text{ for }\phi\in(0,1],
\]
and extend for all $\phi\in\RR$ by $\P(\phi+1):=\P(\phi)[1]$. 
Conversely, for a stability condition $\sigma=(Z,\P)$, the heart $\A$ is given by $\A:=\P_\sigma((0,1])$. 
We also denote stability conditions by $(Z,\A)$.

We recall that the Mukai vector $v:\N(\D^b(X)) \iso \NN(X)$ and the Mukai pairing $(-,-)$ on $\NN(X)\subset\widetilde{H}(X,\ZZ)$, 
then the central charge of a numerical stability condition takes the form $Z(-)=(\Omega, v(-))$ for some $\Omega\in\NN(X)\otimes \CC$. Bridgeland constructed a family of stability conditions on $\D^b(X)$ as follows: 
Let
\begin{eqnarray*}
V(X)&:=&\{\beta+i\omega\in {\rm NS}(X)\otimes\CC~|~\beta,\omega\in{\rm NS(X)}\otimes\RR,~\omega: \RR\text{-ample}, \\
&&({\rm exp}(\beta+i\omega), v(E))\notin\RR_{\le0}\text{ for all spherical sheaves }E\}. 
\end{eqnarray*}
For $\beta+i\omega\in V(X)$, we set $Z_{\beta,\omega}(-):=({\rm exp}(\beta+i\omega), v(-))$. 
The category $\A_{\beta,\omega}$ is the heart of a t-structure obtained by tilting the standard t-structure with respect to the torsion pair $(\T_{\beta,\omega},\F_{\beta,\omega})$ on ${\rm Coh}(X)$ given by
%torsion structure or torsion theory
\begin{eqnarray*}
\T_{\beta,\omega}&:=&\{E\in{\rm Coh}(X)~|~E\text{ is a torsion sheaf or }\mu_\omega^-(E/\text{torsion part})>\beta.\omega\}\\
\F_{\beta,\omega}&:=&\{E\in{\rm Coh}(X)~|~
E\text{ is torsion free and }\mu_\omega^+(E)\le\beta.\omega\}, 
\end{eqnarray*}
where for a torsion free sheaf $E$, 
$\mu_\omega^+(E)$ (resp. $\mu_\omega^-(E)$) is the maximal (resp. minimal) slope of $\mu_\omega$-semistable factors of $E$.  %$\mu_\omega^-(E)$ is the minimal slope of $\mu_\omega$-semistable factors of $E$. 
%Kawatani
Then $(Z_{\beta,\omega},\A_{\beta,\omega})$ is a stability condition on $\D^b(X)$ (\cite[Lemma 6.2 and Proposition 11.2]{BriK3}). 

%----------
%The following notion is important to analyze the space of stability conditions. 
Let $E\in\D^b(X)$ be a non-zero object of $\D^b(X)$ and $\sigma\in {\rm Stab}(X)$ be a stability condition on $\D^b(X)$. 
The {\it mass} $m_{\sigma}(E)\in\RR_{>0}$ of $E$ is defined by
\begin{equation*}
m_{\sigma}(E):=\displaystyle\sum_{i=1}^p|Z_\sigma(A_i)|, 
\end{equation*}
where $A_1,\cdots,A_p$ are $\sigma$-semistable factors of $E$. 
The following generalized metric (\emph{i.e.} with values in $[0,\infty]$) $d_B$ on ${\rm Stab}(X)$ is defined by Bridgeland (\cite[Proposition 8.1]{Bri1}): 
\begin{equation*}
d_B(\sigma, \tau):=\sup_{E\neq0}\left\{|\phi^+_\sigma(E)-\phi^+_\tau(E)|, |\phi^-_\sigma(E)-\phi^-_\tau(E)|, \middle|\log\frac{m_\sigma(E)}{m_\tau(E)}\middle|\right\}\in[0,\infty]. 
%d_B(\sigma, \tau):=\sup_{E\neq0}\left\{\max\left\{|\phi^+_\sigma(E)-\phi^+_\tau(E)|, |\phi^-_\sigma(E)-\phi^-_\tau(E)|, \middle|\log\frac{m_\sigma(E)}{m_\tau(E)}\middle|\right\}\right\}\in[0,\infty]. 
\end{equation*}
This generalized metric induces the topology on ${\rm Stab}(X)$. 
Then the generalized metric $d_B$ takes a finite value on each connected component ${\rm Stab}^\circ(X)$ of ${\rm Stab}(X)$, 
thus $({\rm Stab}^\circ(X),d_B)$ is a metric space in the strict sense. 

\begin{thm}[{\cite[Theorem 7.1]{Bri1}}]%local homeo
The map
\begin{equation}
{\rm Stab}(X)\to\NN(X)\otimes\CC
;~\sigma=((\Omega, v(-)),\P)\mapsto \Omega
%\Hom_\ZZ(\N(\D^b(X)), \CC);~\sigma=(Z,\P)\mapsto Z
\end{equation}
is a local homeomorphism, where $\NN(X)\otimes\CC$ is equipped with the natural linear topology. 
\end{thm}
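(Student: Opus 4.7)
The plan is to prove local homeomorphism at each $\sigma = (Z, \P) \in \Stab(X)$ by exhibiting an open ball $B_\varepsilon(\sigma)$ for the Bridgeland metric $d_B$ that maps bijectively and bicontinuously onto an open neighborhood $U$ of $\Omega_\sigma$ in $\N(\D^b(X)) \otimes \CC$. The argument splits naturally into local injectivity, local surjectivity (the deformation step), and bicontinuity, with only the surjectivity requiring genuine work.

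For local injectivity, I would suppose $\tau = (Z, \Q) \in \Stab(X)$ shares the central charge of $\sigma$ and satisfies $d_B(\sigma, \tau) < \tfrac{1}{4}$. Then the $\tau$-semistable factors of any $\sigma$-semistable object $E$ of phase $\phi$ have phases within $\tfrac{1}{4}$ of $\phi$, while the sum of their complex numbers under $Z$ equals $Z(E)$ and the sum of their absolute values equals $|Z(E)|$; equality in the triangle inequality forces every factor to lie on the same ray, and iterating the comparison shows $\P = \Q$.

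Local surjectivity is the main obstacle. Given $Z'$ close to $Z$, the task is to construct a slicing $\P'$ with central charge $Z'$ that is close to $\P$. I would pick a generic $\phi_0$ near $0$ such that no $\sigma$-semistable object has phase congruent to $\phi_0 \pmod 1$, take the heart $\A := \P((\phi_0, \phi_0 + 1])$, and verify that $Z'$ defines a stability function on $\A$ valued in the semiclosed upper half-plane; this step uses the support property $\|E\| < C|Z(E)|$ on $\sigma$-semistables, which guarantees that $Z'(E)$ stays off the negative real axis once $\|Z' - Z\|$ is small compared to $1/C$. The key technical point is to verify the Harder--Narasimhan property for $Z'$ on $\A$: I would bound the phases of any candidate destabilising subobject using the support property transported from $\sigma$, and then use a noetherian-type argument, inherited from the finite-mass filtration structure of $\sigma$, to exclude infinite descending chains. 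A deformed support property for $(Z', \A)$ follows with a slightly larger constant, producing the desired $\sigma' \in \Stab(X)$ projecting to $\Omega'$.

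Bicontinuity is then the easier afterthought: the projection is manifestly continuous from the definition of $d_B$, and for the inverse one observes that the phases $\phi_{\sigma'}^\pm(E)$ and the masses $m_{\sigma'}(E)$ depend continuously on $Z'$ uniformly in $E$, again via the support property bound. I expect the difficulty to concentrate in the Harder--Narasimhan construction for the deformed central charge, since this is where one must do real work with the underlying triangulated category rather than a formal manipulation of the data $(Z, \P)$; everything else amounts to bookkeeping with the metric $d_B$ and the uniform constant $C$ from the support property.
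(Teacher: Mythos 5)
The paper offers no proof of this statement: it is quoted directly from Bridgeland (\cite[Theorem 7.1]{Bri1}), so your attempt has to be measured against Bridgeland's own argument. Your outline has the right skeleton (local injectivity, deformation, bicontinuity), and you correctly identify the deformation step as the crux, but the strategy you propose for that step does not work in this setting.

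The gap is the reduction to a single tilted heart $\A=\P((\phi_0,\phi_0+1])$. For a K3 surface the set of phases of $\sigma$-semistable objects is in general dense in $\RR$ (the central charge is evaluated on a lattice of rank at least $3$), so a $\phi_0$ avoiding all semistable phases need not exist; and even if $\phi_0$ itself is not a phase, there are semistable objects whose phases accumulate at $\phi_0$. The support property only yields the angular bound $|Z'(E)-Z(E)|<\sin(\pi\varepsilon)\,|Z(E)|$ for all $\sigma$-semistable $E$ once $\|Z'-Z\|<\sin(\pi\varepsilon)/C$, i.e.\ $Z'(E)$ may be rotated by up to $\pi\varepsilon$ relative to $Z(E)$. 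A semistable object of phase in $(\phi_0,\phi_0+\varepsilon)$ can therefore be rotated below the ray of phase $\phi_0$, out of the semiclosed upper half-plane, so $Z'$ fails to be a stability function on the fixed heart $\A$ for every nonzero perturbation. This is exactly why Bridgeland does not fix a heart: he builds the deformed slicing $\Q$ pointwise, declaring $E\in\Q(\psi)$ when $E$ lies in the quasi-abelian category $\P((\psi-\varepsilon,\psi+\varepsilon))$ and is $Z'$-semistable there, and then establishes the Harder--Narasimhan property by an induction on mass inside these thin subcategories; your appeal to a ``noetherian-type argument'' is precisely the work that has to be done there and cannot be done on a single heart $\A$, which is not noetherian. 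A secondary, more repairable issue: in the injectivity step you assert that the $\tau$-mass of a $\sigma$-semistable $E$ equals $|Z(E)|$, but $d_B(\sigma,\tau)<\tfrac14$ only gives $|Z(E)|\le m_\tau(E)\le e^{1/4}|Z(E)|$, so the equality case of the triangle inequality is not available; Bridgeland's Lemma 6.4 instead argues via the vanishing of morphisms between semistable objects of decreasing phase.
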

Therefore the space ${\rm Stab}(X)$ (and each connected component ${\rm Stab}^\circ(X)$) naturally admits a structure of finite dimensional complex manifolds. 

There is a left action of $\Aut(\D^b(X))$ on ${\rm Stab}(X)$ given by
\begin{equation}
F.\sigma:=(Z_\sigma(F^{-1}(-)), \{F(\P_\sigma(\phi))\})~\text{ for }\sigma\in {\rm Stab}(X),~F\in\Aut(\D^b(X)). 
\end{equation}
This action of $\Aut(\D^b(X))$ is isometric with respect to $d_B$. 

%---------------------------------------------------------

Let ${\rm Stab}^{\dagger}(X)$ be the connected component of ${\rm Stab}(X)$ containing the set of geometric stability conditions \emph{i.e.} one for which all structure sheaves of points are stable of the same phase. 
It is easy to check that the above stability condition $(Z_{\beta,\omega}, \A_{\beta,\omega})$ for each $\beta+i\omega\in V(X)$ is geometric. 
\begin{dfn}\label{dist-pres-auteq}
The group $\Aut^{\dagger}(\D^b(X))$ is defined as the subgroup of $\Aut(\D^b(X))$ which preserves the connected component ${\rm Stab}^{\dagger}(X)$. 
\end{dfn}
\begin{prop}[{\cite[proof of Proposition 7.9]{Har}}]
\label{Hartmann}
The following autoequivalences preserve the distinguished component ${\rm Stab}^{\dagger}(X)$ 
\emph{i.e.} are elements in $\Aut^{\dagger}(\D^b(X))${\rm :} 
\begin{itemize}
\item
shift $[n]$ for $n\in\ZZ$
\item
line bundle tensor $-\otimes\mathcal{L}$ for $\mathcal{L}\in{\rm Pic}(X)$
\item
pullback $f^*$ for $f\in\Aut(X)$
\item
The composition
\[
g^*\circ\Phi_\E^{-1}:~\D^b(X)\to\D^b(M)\to\D^b(X), 
\]
where 
$M$ is a $2$-dimensional fine compact moduli space of Gieseker-stable torsion free sheaves on $X$
with the universal family $\E$ such that $g: X\xrightarrow{\sim} M$.  
%\item
%spherical twist $T_{\mathcal{O}_X}$ along $\mathcal{O}_X$
\item
spherical twist $T_A$ along a Gieseker-stable spherical bundle $A$ (e.g. $\mathcal{O}_X$). 
\item
spherical twist $T_{\mathcal{O}_C}$ along $\mathcal{O}_C$ for any $(-2)$-curve $C$ on $X$
\end{itemize}
\end{prop}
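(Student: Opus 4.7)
The plan is to exploit the fact that $\mathrm{Stab}^{\dagger}(X)$ is a connected component of $\mathrm{Stab}(X)$ and that the $\Aut(\D^b(X))$-action on $\mathrm{Stab}(X)$ is by homeomorphisms. For each autoequivalence $\Phi$ in the list it then suffices to exhibit a single $\sigma\in\mathrm{Stab}^{\dagger}(X)$ with $\Phi.\sigma\in\mathrm{Stab}^{\dagger}(X)$; connectedness will force $\Phi$ to preserve the whole component. Throughout I would use the Bridgeland geometric stability conditions $\sigma_{\beta,\omega}=(Z_{\beta,\omega},\A_{\beta,\omega})$ with $\beta+i\omega\in V(X)$ as test points, as these visibly belong to $\mathrm{Stab}^{\dagger}(X)$ by construction.

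For the first four items the verification is essentially formal. The shift $[1]$ induces the same transformation on slicings and central charges as the element of $\widetilde{GL}_2^+(\RR)$ which rotates phases by $1$; since $\widetilde{GL}_2^+(\RR)$ is connected and acts continuously on $\mathrm{Stab}(X)$, it must fix every connected component, which handles $[n]$ for all $n\in\ZZ$. For $-\otimes\mathcal{L}$ and $f^*$, a direct computation using $v(E\otimes\mathcal{L})=v(E)\cdot\exp(c_1(\mathcal{L}))$, $v(f^*E)=f^*v(E)$ and the compatibility of the Gieseker-stability torsion pair $(\T_{\beta,\omega},\F_{\beta,\omega})$ with these two autoequivalences identifies $(-\otimes\mathcal{L}).\sigma_{\beta,\omega}$ and $(f^*).\sigma_{\beta,\omega}$ with other geometric stability conditions of the form $\sigma_{\beta',\omega'}$. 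For $g^*\circ\Phi_{\E}^{-1}$, I would invoke Bridgeland's moduli-space theorem identifying $\mathrm{Stab}^{\dagger}(M)$ with $\mathrm{Stab}^{\dagger}(X)$ via $\Phi_{\E}$ and matching their geometric chambers; composing with $g^*$ is then covered by the previous point applied to the isomorphism $g\colon X\iso M$.

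The spherical-twist entries $T_A$ and $T_{\mathcal{O}_C}$ form the main obstacle, since a spherical twist generally does not send the geometric chamber into itself. The strategy is to choose $(\beta,\omega)$ carefully so that the relevant spherical object is $\sigma_{\beta,\omega}$-(semi)stable of a controlled phase, and then read off the new heart explicitly. For $T_A$ with $A$ Gieseker-stable, I would take $(\beta,\omega)$ deep enough inside the ample cone that $A$ is $\sigma_{\beta,\omega}$-stable; the defining triangle \eqref{sph-tri} for $T_A$ then lets one recognise $T_A\A_{\beta,\omega}$ as a tilt of $\A_{\beta',\omega'}$ for a nearby $(\beta',\omega')\in V(X)$, forcing $T_A.\sigma_{\beta,\omega}\in\mathrm{Stab}^{\dagger}(X)$. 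The case $T_{\mathcal{O}_C}$ is analogous, using that $\mathcal{O}_C$ is spherical with Mukai vector a $(-2)$-class: placing $(\beta,\omega)$ near the wall where $\mathcal{O}_C$ becomes $\sigma_{\beta,\omega}$-semistable, $T_{\mathcal{O}_C}$ realises the reflection in $[C]$ between two adjacent subchambers of the geometric region. The core technical step in both cases is the explicit identification of the tilted heart with a geometric one, which is the content of Hartmann's original argument.
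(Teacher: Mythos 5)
The paper gives no proof of this proposition at all: it is imported verbatim, with attribution, from the proof of Proposition 7.9 in Hartmann's paper, so there is no internal argument to measure yours against. That said, your overall reduction is the correct one and is how the cited proof is organised: each $\Phi$ acts on ${\rm Stab}(X)$ by a homeomorphism (indeed a $d_B$-isometry), hence permutes connected components, so a single $\sigma\in{\rm Stab}^{\dagger}(X)$ with $\Phi.\sigma\in{\rm Stab}^{\dagger}(X)$ suffices; and the geometric stability conditions all lie in ${\rm Stab}^{\dagger}(X)$ by definition, so it is enough to land anywhere among them. Your treatment of $[n]$ via the connected group $\widetilde{GL}{}^+_2(\RR)$, and of $-\otimes\mathcal{L}$ and $f^*$ via the identifications $(-\otimes\mathcal{L}).\sigma_{\beta,\omega}=\sigma_{\beta+c_1(\mathcal{L}),\omega}$ and $f^*.\sigma_{\beta,\omega}=\sigma_{\beta',\omega'}$, is sound. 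For $g^*\circ\Phi_{\E}^{-1}$ your appeal to a theorem ``identifying ${\rm Stab}^{\dagger}(M)$ with ${\rm Stab}^{\dagger}(X)$ via $\Phi_{\E}$'' is circular as stated --- that identification is part of what must be proved --- but the standard fix is available: for $\omega\gg0$ the $\sigma_{\beta,\omega}$-stable objects in the relevant Mukai vector class are exactly the Gieseker-stable sheaves parametrised by $M$, all of one phase, so $\Phi_{\E}^{-1}.\sigma_{\beta,\omega}$ is geometric on $M$.

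The genuine gap is in the two spherical-twist items, which you correctly identify as the crux. The mechanism you propose for $T_A$ --- take $(\beta,\omega)$ deep in the ample cone so that $A$ is $\sigma_{\beta,\omega}$-stable and then ``recognise $T_A\A_{\beta,\omega}$ as a tilt of $\A_{\beta',\omega'}$ for a nearby $(\beta',\omega')$'' --- does not work. For $\sigma_{\beta,\omega}$ deep inside the geometric chamber, $T_A.\sigma_{\beta,\omega}$ is in general far from that chamber (already for $A=\mathcal{O}_X$ the skyscrapers are sent to shifted ideal sheaves, and the displacement grows with $\omega$), and there is no reason for its heart to be a one-step tilt of a nearby geometric heart. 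Moreover, even if the heart were such a tilt, that alone would not place the stability condition in ${\rm Stab}^{\dagger}(X)$: membership in a connected component is not detected by the heart. The argument that actually works, and that Hartmann uses, runs through Bridgeland's analysis of the boundary of the geometric chamber: a generic boundary point lies on a wall of type $(A^{\pm})$ attached to a spherical Gieseker-stable bundle $A$, or of type $(C_k)$ attached to a $(-2)$-curve $C$, and Bridgeland's Theorem 12.1 exhibits, for each such wall, an explicit power of the corresponding twist carrying a one-sided neighbourhood of the wall back into the geometric chamber; combining this with the easy items yields $T_A,\,T_{\mathcal{O}_C}\in\Aut^{\dagger}(\D^b(X))$. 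Your sketch for $T_{\mathcal{O}_C}$ gestures at this wall picture, but for both twist items you ultimately defer the ``core technical step'' to Hartmann's original argument --- which is precisely the step a proof of this proposition has to supply.
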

We additionally define the following groups
\begin{eqnarray*}
\Aut^{\dagger}_{{\rm CY}}(\D^b(X))&:=&\Aut_{{\rm CY}}(\D^b(X))\cap\Aut^{\dagger}(\D^b(X))\\
\Aut^{\dagger}_0(\D^b(X))&:=&\Aut_0(\D^b(X))\cap\Aut^{\dagger}(\D^b(X))
\end{eqnarray*}

Define the open subset $\P(X)\subset \NN(X)\otimes\CC$
consisting of vectors $\Omega\in\NN(X)\otimes\CC$ whose real and imaginary parts span a positive definite 2-plane $(\langle{\rm Re}\Omega,{\rm Im}\Omega \rangle_\RR, (-,-))$ in $\NN(X)\otimes\RR$. 
This subset has two connected components, distinguished by the orientation induced on this 2-plane; let $\P^+(X)$ be the component containing vectors of the form $(1,i\omega,-\frac{1}{2}\omega^2)$ for an ample class $\omega\in{\rm NS}(X)\otimes\RR$. Consider the root system
\[
\Delta(X):=\{\delta\in\NN(X)~|~(\delta,\delta)=-2\}
\]
consisting of $(-2)$-classes in $\NN(X)$, and the corresponding hyperplane complement
\[
\P_0^+(X):=\P^+(X)\backslash \bigcup_{\delta\in\Delta(X)}\delta^{\bot}.
\]

\begin{thm}[{\cite[Theorem 1.1]{BriK3}}]\label{aut-deck}
The map
\[
\pi: {\rm Stab}^{\dagger}(X) \to \P_0^+(X);~
((\Omega, v(-)), \P)\mapsto \Omega
\]
is a normal covering, and the group $\Aut^{\dagger}_0(\D^b(X))$
is identified with the group of deck transformations of $\pi$. 
\end{thm}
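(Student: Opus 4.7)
The plan is to establish this in three stages: first that $\pi$ is a local homeomorphism with image landing in $\P_0^+(X)$, then that $\pi$ is a covering onto its image, and finally that the deck transformation group coincides with $\Aut_0^\dagger(\D^b(X))$.

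For the first stage, local homeomorphy is already given: by the general deformation theorem of Bridgeland recalled in \S7 above, for every stability condition $\sigma$ the central charge serves as a local coordinate on $\Stab(X)$, so $\pi$ is automatically a local homeomorphism on all of $\Stab(X)$. To show the image of $\Stab^\dagger(X)$ lies in $\P^+(X)$, I would exhibit an anchor: the geometric stability conditions $(Z_{\beta,\omega}, \A_{\beta,\omega})$ constructed from $\beta + i\omega \in V(X)$ have central charge $\Omega = \exp(\beta + i\omega) \in \NN(X) \otimes \CC$, and a direct computation shows that $\re\Omega$ and $\im\Omega$ span a positive-definite $2$-plane with the distinguished orientation. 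Since $\Stab^\dagger(X)$ is connected and $\P^+(X)$ is a connected component of the positive-plane locus, continuity and openness of $\pi$ force the whole image into $\P^+(X)$. Avoidance of the root hyperplanes $\delta^\perp$ uses the support property: if $\pi(\sigma) \in \delta^\perp$ for some $\delta$ with $(\delta,\delta) = -2$, then the Mukai vector $\delta$ has vanishing central charge at $\sigma$, but any class supporting a $\sigma$-semistable object has nonzero central charge, and a standard argument shows that a $(-2)$-class always carries such an object in a neighborhood, yielding a contradiction.

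The second stage, showing $\pi$ is a covering onto $\P_0^+(X)$, is the heart of the argument and hinges on a path-lifting property. Given a path $\gamma \colon [0,1] \to \P_0^+(X)$ with $\gamma(0) = \pi(\sigma_0)$, I would attempt to lift using the local homeomorphism and show that the lift extends to all of $[0,1]$. The obstruction is that the lift might ``run to infinity'' in $\Stab^\dagger(X)$ as $t$ approaches some $t_0$; this is controlled by the walls in $\Stab^\dagger(X)$, which by Bridgeland's wall-and-chamber analysis are locally finite and labeled by spherical classes. At each such wall, one can continue the lift by applying an inverse spherical twist along the class whose phase is becoming degenerate, using Lemma \ref{sph-elementary}(ii) and the fact that such twists lie in $\Aut^\dagger$ (cf.\ Proposition \ref{Hartmann}). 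Surjectivity onto $\P_0^+(X)$ follows because the explicit family $(Z_{\beta,\omega}, \A_{\beta,\omega})$ already covers an open subset of $\P_0^+(X)$, and path-lifting propagates existence of lifts to the rest.

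The final stage identifies the deck group. By construction, $\Phi \in \Aut^\dagger(\D^b(X))$ satisfies $\pi \circ \Phi = \pi$ if and only if $\Phi^H = \mathrm{id}$ on $\widetilde{H}(X,\ZZ)$, i.e.\ $\Phi \in \Aut_0^\dagger(\D^b(X))$; this gives an injective map from $\Aut_0^\dagger(\D^b(X))$ into the deck group. For surjectivity, given two lifts $\sigma, \sigma'$ of the same point, the path-lifting construction produces an autoequivalence built as a composition of spherical twists (all in $\Aut^\dagger$) sending $\sigma$ to $\sigma'$; since it preserves the central charge on every class, it acts trivially on cohomology, hence lies in $\Aut_0^\dagger$. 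This transitivity of the action on fibers simultaneously upgrades the covering to a normal covering.

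The main obstacle I expect is the path-lifting step and in particular the wall-and-chamber analysis: one needs to know both that walls are locally finite and that they are labeled precisely by spherical classes whose twists suffice to cross them. This requires a substantial amount of information about semistable spherical objects on a K3 surface, and is where the $(-2)$-class structure of $\NN(X)$ enters essentially.
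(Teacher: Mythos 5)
This statement is not proved in the paper at all: it is Bridgeland's covering--space theorem, imported verbatim as \cite[Theorem 1.1]{BriK3} and used as a black box (e.g.\ in the proof of Theorem \ref{main}). So there is no internal proof to compare against; what you have written is a sketch of Bridgeland's own argument, and it should be judged on those terms.

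As such a sketch it captures the right architecture (local homeomorphism from the deformation theorem, anchoring the image in $\P^+(X)$ via the geometric stability conditions, excluding $\delta^\perp$ via the support property, extending lifts across walls by spherical twists), but two gaps are genuine. First, the entire second stage rests on the boundary analysis of the geometric chamber --- local finiteness of walls, their classification by spherical classes, and the fact that a specific list of autoequivalences ($T_A^{\pm1}$ for Gieseker-stable spherical bundles, $T_{\mathcal{O}_C}$, line bundle twists) suffices to re-enter the geometric chamber after crossing each type of wall, while staying inside ${\rm Stab}^{\dagger}(X)$. This is the technical core of Bridgeland's paper (his Sections 9--13), and your proposal assumes it rather than proves it; you acknowledge this, but without it neither path-lifting nor surjectivity onto $\P_0^+(X)$ is established. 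Second, your final stage contains an actual error: $\pi\circ\Phi=\pi$ forces $\Phi^H$ to be the identity only on $\NN(X)$ (the central charge sees only the algebraic part of the Mukai lattice), not on all of $\widetilde{H}(X,\ZZ)$. Indeed, for $f_t\in\Aut_t(X)$ nontrivial, $f_t^*$ preserves $\pi$ --- it even acts trivially on ${\rm Stab}^{\dagger}(X)$, a fact the paper uses in proving Theorem \ref{main} --- yet $f_t^*\notin\Aut_0^{\dagger}(\D^b(X))$ since it acts nontrivially on $T(X)$. So your claimed ``if and only if'' fails, and the identification of the deck group requires two further arguments you do not supply: that the action of $\Aut_0^{\dagger}(\D^b(X))$ on ${\rm Stab}^{\dagger}(X)$ is free (no nontrivial cohomologically trivial autoequivalence fixes a stability condition), and that it is transitive on fibers of $\pi$, which again feeds back into the wall-crossing machinery.
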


The following is a Bridgeland conjecture on the space of stability conditions on $\D^b(X)$ and the action on it of $\Aut(\D^b(X))$. 
\begin{conj}[{\cite[Conjecture 1.2]{BriK3}}]\label{Bri-conjecture}
Let $X$ be a K3 surface. 
Then 
\begin{enumerate}
\item
${\rm Stab}^{\dagger}(X)$ is simply-connected. 
\item
any autoequivalence of $\D^b(X)$ preserves the distinguished component ${\rm Stab}^{\dagger}(X)$, equivalently we have 
\[
\Aut^{\dagger}(\D^b(X))=\Aut(\D^b(X)). 
\]
\end{enumerate}
\end{conj}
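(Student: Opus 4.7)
The plan is to treat the two parts of the conjecture separately but to exploit the covering $\pi: \mathrm{Stab}^{\dagger}(X) \to \P_0^+(X)$ from Theorem \ref{aut-deck} as the central technical tool. For part (i), the strategy is to identify $\mathrm{Stab}^{\dagger}(X)$ with the universal cover of $\P_0^+(X)$. First I would compute the fundamental group of $\P_0^+(X)$: as a hyperplane complement in the Grassmannian-like open subset $\P^+(X)$ of positive $2$-planes in $\NN(X)\otimes\RR$, one expects $\pi_1(\P_0^+(X))$ to be generated by small loops around each wall $\delta^\perp$, $\delta\in\Delta(X)$, together with a generator coming from rotating the $2$-plane. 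I would then try to show that each small loop around $\delta^\perp$ lifts to the deck transformation $T^2_{E_\delta}$ for a spherical object $E_\delta$ with $v(E_\delta)=\delta$, and that the rotation loop lifts to $[2]$. Since $\mathrm{Aut}_0^{\dagger}(\D^b(X))$ is precisely the deck group of $\pi$, showing these classes exhaust $\pi_1(\P_0^+(X))$ is equivalent to $\pi$ being the universal covering, whence $\mathrm{Stab}^{\dagger}(X)$ is simply connected.

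For part (ii), I would fix an arbitrary $\Phi\in\mathrm{Aut}(\D^b(X))$ and try to pull $\Phi$ into $\mathrm{Aut}^{\dagger}(\D^b(X))$ up to composition with the generators listed in Proposition \ref{Hartmann}. The starting input is the derived Torelli theorem: $\Phi^H$ is a Hodge isometry of $\widetilde{H}(X,\ZZ)$, and, after composing with line bundle twists, shifts, pullbacks by automorphisms, and Fourier--Mukai transforms from fine two-dimensional moduli of Gieseker-stable torsion free sheaves, one can arrange that $\Phi^H$ maps the positive cone $\P^+(X)$ into itself with a prescribed action on $\NN(X)$. I would then attempt to produce a geometric stability condition $\sigma\in\mathrm{Stab}^{\dagger}(X)$ such that $\Phi.\sigma$ is again geometric --- equivalently, that all skyscraper sheaves $k(x)$ remain $\Phi.\sigma$-stable of a common phase. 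Since $\mathrm{Stab}^{\dagger}(X)$ is characterised as the component containing geometric conditions and $\Phi$ acts as a homeomorphism preserving components, this would force $\Phi(\mathrm{Stab}^{\dagger}(X))=\mathrm{Stab}^{\dagger}(X)$.

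The main obstacle, and the reason the conjecture remains open in general, is the global control of $\pi_1(\P_0^+(X))$ and the matching with the group generated by squared spherical twists and $[2]$. For higher Picard rank the arrangement $\{\delta^\perp\}_{\delta\in\Delta(X)}$ is only locally finite and its combinatorics is intricate; ruling out extra monodromy means ruling out unexpected cohomologically trivial autoequivalences, which is essentially the content of the conjecture itself. A workable plan would have to import heavy Hodge- and moduli-theoretic input beyond the techniques of this paper: a wall-and-chamber analysis of $\mathrm{Stab}^{\dagger}(X)$ via Bridgeland moduli of semistable objects, together with Global Torelli for hyperk\"ahler fourfolds (through O'Grady-type moduli), in the spirit of the Bayer--Bridgeland proof in Picard rank one. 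Absent such input, the best one can do is reduce the problem to a concrete statement about how the deck group of $\pi$ sits inside $\pi_1(\P_0^+(X))$ and leave that comparison for a separate argument.
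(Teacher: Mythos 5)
The statement you are attempting to prove is labelled as a \emph{conjecture} in the paper (Bridgeland's \cite[Conjecture 1.2]{BriK3}); the paper offers no proof of it and only records it, noting that the sole established case is Picard rank one (Theorem \ref{BB-thm}, due to Bayer--Bridgeland). So there is no ``paper proof'' to compare against, and any complete argument here would be a major new result rather than a routine verification.

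Your proposal correctly identifies the standard strategy --- use the covering $\pi\colon \mathrm{Stab}^{\dagger}(X)\to\P_0^+(X)$ of Theorem \ref{aut-deck}, try to show the deck group $\Aut_0^{\dagger}(\D^b(X))$ exhausts $\pi_1(\P_0^+(X))$ for part (i), and for part (ii) reduce an arbitrary autoequivalence modulo the generators of Proposition \ref{Hartmann} and show it sends a geometric stability condition to a geometric one. But as written it is a plan, not a proof, and you acknowledge this yourself: the two essential steps --- computing $\pi_1(\P_0^+(X))$ and matching its generators with $T_{E_\delta}^2$ and $[2]$, and showing that skyscraper sheaves remain stable under $\Phi.\sigma$ --- are exactly where the difficulty lives, and you defer both. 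In particular, the claim that $\pi_1(\P_0^+(X))$ is ``generated by small loops around each wall together with a rotation generator'' is not known for a locally finite but infinite hyperplane arrangement of this type, and ruling out extra monodromy is, as you note, essentially equivalent to the conjecture. The honest conclusion is that your text should be presented as a survey of the expected approach (consistent with \cite{BB} in rank one), not as a proof of the statement.
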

This conjecture clearly implies an isomorphism
\[
\pi_1(\P_0^+(X))\simeq \Aut_0(\D^b(X)).
\]
\begin{thm}[{\cite[Theorem 1.3]{BB}}]\label{BB-thm}
Conjecture \ref{Bri-conjecture} holds in the case of Picard rank one.
\end{thm}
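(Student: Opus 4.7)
The plan is to follow the strategy of Bayer--Bridgeland. The crucial input is Theorem \ref{aut-deck}: the map $\pi \colon {\rm Stab}^\dagger(X) \to \P_0^+(X)$ is a normal cover whose deck group is $\Aut^\dagger_0(\D^b(X))$. Hence, to prove part (i) of Conjecture \ref{Bri-conjecture} it suffices to show the induced homomorphism $\Aut^\dagger_0(\D^b(X)) \to \pi_1(\P_0^+(X))$ is an isomorphism, and part (ii) reduces to checking that every $\Phi \in \Aut(\D^b(X))$ differs, up to an element of the deck group, from a composition of autoequivalences already known to preserve ${\rm Stab}^\dagger(X)$ by Proposition \ref{Hartmann}.

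First I would analyze $\P_0^+(X)$ explicitly in Picard rank one. With $\mathrm{NS}(X) = \ZZ H$, the Mukai lattice $\NN(X)$ has rank three, and after quotienting $\P^+(X)$ by the natural $\CC^*$-rescaling of central charges one obtains a domain closely related to the upper half-plane in which the $(-2)$-walls $\delta^\perp$, $\delta \in \Delta(X)$, appear as a locally finite family of geodesic semicircles governed by the Pell equation for $H^2$ and forming a Farey-type tessellation. I would then compute $\pi_1(\P_0^+(X))$ by reading off small loops around each such wall, and lift each loop in ${\rm Stab}^\dagger(X)$ to a squared spherical twist $T_E^2$ (up to shifts) with $v(E) = \delta$. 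Together with the shift $[2]$ and a translation generator coming from the cover structure, these produce enough elements to surject $\Aut^\dagger_0(\D^b(X))$ onto $\pi_1(\P_0^+(X))$; injectivity would follow from an independent description of $\Aut^\dagger_0(\D^b(X))$ via the Mukai Hodge-theoretic criterion, which pins down its generators with no extra relations. For part (ii), given $\Phi \in \Aut(\D^b(X))$, I would examine the cohomological Fourier--Mukai transform $\Phi^H \in \mathrm{O}(\widetilde{H}(X,\ZZ))$ and use the classification of oriented Hodge isometries in Picard rank one to write $\Phi^H$ as a product of cohomological actions of autoequivalences from Proposition \ref{Hartmann}; since each factor preserves ${\rm Stab}^\dagger(X)$, so does $\Phi$ up to the deck group action.

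The main obstacle is the explicit Farey analysis in the second step: matching each geodesic wall in $\P_0^+(X)$ to the correct spherical class $\delta$, and verifying that the corresponding squared spherical twist really lifts a small loop around $\delta^\perp$, requires delicate tracking of monodromy across adjacent chambers, while ruling out hidden relations in the presentation of $\Aut^\dagger_0(\D^b(X))$ demands the full strength of the Mukai classification. The rank-one hypothesis is essential because it makes the relevant Pell equation tractable; in higher Picard rank the hyperplane arrangement becomes far more intricate, which is why Conjecture \ref{Bri-conjecture} remains open beyond this case.
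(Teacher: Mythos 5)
This statement is not proved in the paper at all: it is imported verbatim from Bayer--Bridgeland \cite[Theorem 1.3]{BB}, so there is no internal argument to compare against, and any proposal here is really an attempt to reprove a substantial external theorem. Judged on its own terms, your outline identifies the correct logical reduction (part (i) of Conjecture \ref{Bri-conjecture} is equivalent, via Theorem \ref{aut-deck}, to the surjection $\pi_1(\P_0^+(X))\twoheadrightarrow \Aut_0^{\dagger}(\D^b(X))$ being injective), but the step you defer is precisely the entire content of the theorem, and the tool you propose for it cannot work. You claim injectivity ``would follow from an independent description of $\Aut_0^{\dagger}(\D^b(X))$ via the Mukai Hodge-theoretic criterion.'' By definition $\Aut_0^{\dagger}(\D^b(X))$ consists of autoequivalences acting trivially on $\widetilde{H}(X,\ZZ)$, so no Hodge-theoretic or cohomological criterion can detect its elements, let alone rule out hidden relations among squared spherical twists. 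Ruling out such relations is equivalent to the simple connectivity of ${\rm Stab}^{\dagger}(X)$; Bayer--Bridgeland prove the stronger statement that ${\rm Stab}^{\dagger}(X)$ is contractible by a direct analysis of the geometric chamber and its translates, not by presenting $\pi_1(\P_0^+(X))$ by generators and relations and matching it to a known presentation of the deck group.

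Two further points. First, your geometric picture of $\P_0^+(X)$ is wrong: in Picard rank one $\NN(X)$ has signature $(2,1)$, so the Grassmannian of oriented positive-definite $2$-planes is a copy of the hyperbolic plane, and for each $\delta\in\Delta(X)$ the real subspace $\delta^{\perp}_{\RR}$ is itself positive definite of rank $2$; hence $\delta^{\perp}\cap\P^+(X)$ sits over a \emph{single point} of that hyperbolic plane, not over a geodesic semicircle, and there is no Farey tessellation of walls (the Pell equation enters only in enumerating the classes $\delta$). Second, your argument for part (ii) is circular: after matching $\Phi^H$ with the cohomological action of a product of autoequivalences from Proposition \ref{Hartmann}, you are left with an element of $\Aut_0(\D^b(X))$, and to conclude that $\Phi$ preserves ${\rm Stab}^{\dagger}(X)$ you must know that every cohomologically trivial autoequivalence does so, i.e.\ that $\Aut_0(\D^b(X))=\Aut_0^{\dagger}(\D^b(X))$ --- which is again part of what is being proved, not something available ``up to the deck group action.'' If you need this result, cite \cite{BB}; a self-contained proof is well beyond a one-paragraph reduction.
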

More strongly, Bayer--Bridgeland proved the contractibility of ${\rm Stab}^{\dagger}(X)$ in \cite{BB}. 

In Section \ref{section-center}, we consider the center groups $Z(\Aut^{\dagger}(\D^b(X)))$, $Z(\Aut^{\dagger}_{{\rm CY}}(\D^b(X)))$, and the centralizer group $C_{\Aut^{\dagger}(\D^b(X))}(\Aut^{\dagger}_{{\rm CY}}(\D^b(X)))$. 

%%%%%%%%%%%%%%%%%%%%%%%%%%%%%%%%%%%%%%%%%%%%%%%%%%%%%%%%%%
\section{The center of autoequivalence groups of K3 surfaces}\label{section-center}
Let $X$ be a K3 surface of any Picard rank.  
%---------------------------------------------------------
\subsection{Main results}
We compute the center groups of $\Aut^{\dagger}(\D^b(X))$ and $\Aut_{{\rm CY}}^{\dagger}(\D^b(X))$ and the centralizer groups $C_{\Aut^{\dagger}(\D^b(X))}(\Aut_{{\rm CY}}^{\dagger}(\D^b(X)))$.
%, which is the main result of this section. 
\begin{thm}\label{main}
Let $X$ be a K3 surface, $m_X$ the  order of $\Aut_t(X)$, and $f_t$ a generator of $\Aut_t(X)$.
Then we have the following
\begin{enumerate}
\item
$Z(\Aut^{\dagger}(\D^b(X)))
=C_{\Aut^{\dagger}(\D^b(X))}(\Aut^{\dagger}_{{\rm CY}}(\D^b(X)))
=\Aut_t(X)\times\ZZ[1]
\simeq(\ZZ/m_X)\times\ZZ$. 
\item
\begin{eqnarray*}
Z(\Aut^{\dagger}_{{\rm CY}}(\D^b(X)))
&=&
\begin{cases}
\langle ~(f_t^*)^{m_X/2}\circ[1]~\rangle & \mbox{if }m_X\mbox{ is even}\\
\ZZ[2] & \mbox{if }m_X\mbox{ is odd}
\end{cases}\\
&\simeq&\ZZ. 
\end{eqnarray*}
\end{enumerate}
\end{thm}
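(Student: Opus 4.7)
My plan is to prove Theorem \ref{main} via the chain of inclusions
\begin{equation*}
    \Aut_t(X) \times \ZZ[1] \;\subset\; Z(\Aut^{\dagger}(\D^b(X))) \;\subset\; C_{\Aut^{\dagger}(\D^b(X))}(\Aut^{\dagger}_{{\rm CY}}(\D^b(X))) \;\subset\; \Aut_t(X) \times \ZZ[1],
\end{equation*}
the middle inclusion being automatic from $\Aut^{\dagger}_{{\rm CY}} \subset \Aut^{\dagger}$. The shift $[1]$ is tautologically central in any autoequivalence group. For $f \in \Aut_t(X)$, the observation $\Aut_t(X) \subset Z(\Aut(X))$ (which follows from injectivity of $\Aut_t(X) \hookrightarrow \mathrm{O}(T(X))$ and commutativity of the cyclic group $\mathrm{O}(T(X))$) upgrades to centrality in $\Aut^{\dagger}(\D^b(X))$ through a Fourier--Mukai kernel argument: since $(f^{*})^{H}$ is the identity on $\mathcal{N}(X)$ and acts as a root of unity on $T(X)$, the commutator $[\Phi, f^{*}]$ for any $\Phi \in \Aut^{\dagger}(\D^b(X))$ acts trivially on cohomology, and the remaining residue in $\Aut^{\dagger}_{0}(\D^b(X))$ can be pinned down by evaluating on $\mathcal{O}_X$ and the $\mathcal{O}_C$, all of which $f^{*}$ fixes.

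For the hard inclusion, let $\Phi \in C_{\Aut^{\dagger}(\D^b(X))}(\Aut^{\dagger}_{{\rm CY}}(\D^b(X)))$. Since $\mathcal{O}_X$ is $2$-spherical and $T_{\mathcal{O}_X} \in \Aut^{\dagger}_{{\rm CY}}$ by Proposition \ref{Hartmann}, Theorem \ref{commute-sph-intro} gives $\Phi(\mathcal{O}_X) \simeq \mathcal{O}_X[l]$ for some $l \in \ZZ$. Commutativity of $\Phi$ with $-\otimes L$ (a CY autoequivalence) for any line bundle $L$ yields $\Phi(L) \simeq L[l]$ with the same shift $l$, so after composing with $[-l]$ we may assume $\Phi$ fixes every line bundle. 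Applying Theorem \ref{commute-sph-intro} to $T_{\mathcal{O}_C}$ for each $(-2)$-curve $C$ gives $\Phi(\mathcal{O}_C) \simeq \mathcal{O}_C[l_C]$, and since $\Hom(\mathcal{O}_X, \mathcal{O}_C) \neq 0$ while $\Phi$ fixes $\mathcal{O}_X$, we conclude $l_C = 0$. Together with commutativity with pullbacks $g^{*}$ along all symplectic $g \in \Aut_s(X)$, a derived Torelli argument identifies $\Phi$ with a pullback $f^{*}$ for some $f \in \Aut(X)$; the constraint on line bundles then forces $f \in \Aut_t(X)$.

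For Part (ii), Part (i) reduces the computation to
\begin{equation*}
    Z(\Aut^{\dagger}_{{\rm CY}}(\D^b(X))) = (\Aut_t(X) \times \ZZ[1]) \cap \Aut^{\dagger}_{{\rm CY}}(\D^b(X)),
\end{equation*}
the inclusion $\supset$ being immediate from centrality in the larger group, and $\subset$ following from $Z(\Aut^{\dagger}_{{\rm CY}}) \subset C_{\Aut^{\dagger}}(\Aut^{\dagger}_{{\rm CY}})$. An element $(f_t^{*})^{k} \circ [n]$ lies in $\Aut^{\dagger}_{{\rm CY}}$ iff its action on $H^{2,0}(X)$ is trivial, i.e., $\zeta^{k}(-1)^{n} = 1$ where $\zeta$ is the primitive $m_X$-th root of unity through which $f_t^{*}$ acts on $H^{2,0}(X)$. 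A parity case analysis extracts the stated generators: for $m_X$ odd the only solutions are $n$ even and $m_X \mid k$, giving $\ZZ[2]$; for $m_X$ even the solutions form the cyclic group generated by $\gamma := (f_t^{*})^{m_X/2} \circ [1]$, which satisfies $\gamma^{2} = (f_t^{*})^{m_X} \circ [2] = [2]$ and is therefore infinite.

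The main obstacle I anticipate is the derived Torelli step in the hard inclusion: upgrading the pointwise equalities $\Phi(L) \simeq L$ and $\Phi(\mathcal{O}_C) \simeq \mathcal{O}_C$ to the global identification $\Phi = f^{*}$ requires classifying the Fourier--Mukai kernels that induce the zero cohomological action and removing the ambiguity in $\Aut^{\dagger}_{0}(\D^b(X))$, which will rely on Theorem \ref{aut-deck} and a generating-set argument for spherical objects. A secondary technical point is the FM-kernel analysis needed to promote $\Aut_t(X) \subset Z(\Aut(X))$ to centrality in $\Aut^{\dagger}(\D^b(X))$.
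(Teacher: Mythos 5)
Your overall architecture --- the chain of inclusions, the use of Theorem \ref{commute-sph} applied to $T_{\mathcal{O}_X}$ together with commutation with $-\otimes\mathcal{L}$ to trap the centralizer inside $\Aut_t(X)\times\ZZ[1]$, and the reduction of part (ii) to the intersection $(\Aut_t(X)\times\ZZ[1])\cap\Aut^{\dagger}_{\rm CY}(\D^b(X))$ followed by the root-of-unity analysis --- coincides with the paper's. The ``derived Torelli step'' you flag as your main obstacle is in fact a citable result (the paper invokes the argument of Huybrechts' Lemma A.2: a Fourier--Mukai autoequivalence sending every line bundle to itself is the pullback by an automorphism acting trivially on ${\rm NS}(X)$), so that part of your plan is sound, and your part (ii) is correct as written.

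The genuine gap is in the inclusion $\Aut_t(X)\subset Z(\Aut^{\dagger}(\D^b(X)))$. You correctly reduce to showing that the commutator $\Psi_t:=\Psi\circ f_t^*\circ\Psi^{-1}\circ(f_t^*)^{-1}$ lies in $\Aut^{\dagger}_0(\D^b(X))$, but your proposal to ``pin down'' $\Psi_t$ by evaluating on $\mathcal{O}_X$ and the $\mathcal{O}_C$ fails on two counts. First, there is no reason $\Psi_t$ fixes these objects: $\Psi_t(\mathcal{O}_X)=\Psi(f_t^*(\Psi^{-1}(\mathcal{O}_X)))$, and $\Psi^{-1}(\mathcal{O}_X)$ is an arbitrary object that $f_t^*$ need not fix as $\Psi$ ranges over all of $\Aut^{\dagger}(\D^b(X))$. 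Second, even if it did fix them, $\Aut^{\dagger}_0(\D^b(X))$ is a large group (it contains $[2]$ and squares of spherical twists), and fixing a family of objects up to isomorphism does not by itself force an element of $\Aut^{\dagger}_0$ to be trivial. The paper's mechanism is different and essential: $f_t^*$ acts trivially on the entire component ${\rm Stab}^{\dagger}(X)$ (Huybrechts' Lemma A.3, using that $f_t^*$ acts trivially on $\NN(X)$ and preserves geometric stability conditions), hence so does the conjugate $\Psi\circ f_t^*\circ\Psi^{-1}$ because $\Psi$ preserves the component, hence so does $\Psi_t$; since $\Aut^{\dagger}_0(\D^b(X))$ is identified with the deck transformation group of ${\rm Stab}^{\dagger}(X)\to\P_0^+(X)$ by Theorem \ref{aut-deck}, and deck groups act freely, $\Psi_t={\rm id}$. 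Without this stability-manifold input (or an equivalent substitute), you only obtain the upper bound $Z(\Aut^{\dagger}(\D^b(X)))\subset\Aut_t(X)\times\ZZ[1]$, not the claimed equality.
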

\begin{pf}
\begin{enumerate}
\item
We note that 
$\Aut(X)\ltimes\Pic(X)\subset\Aut^{\dagger}(\D^b(X))$ and 
$T_{\mathcal{O}_X}\in\Aut^{\dagger}(\D^b(X))$, 
see Proposition \ref{Hartmann}. 
%(cf. \cite[Proposition 7.9]{Har}). 
Fix any $\Phi\in Z(\Aut^{\dagger}(\D^b(X)))$. 
By Theorem \ref{commute-sph}, the relation $\Phi\circ T_{\mathcal{O}_X}=T_{\mathcal{O}_X}\circ\Phi$ 
implies $\Phi(\mathcal{O}_X)=\mathcal{O}_X[i]$ for some $i\in\ZZ$. 
For any line bundle $\mathcal{L}\in\Pic(X)$ on $X$, 
we have $(\Phi\circ[-i])(\mathcal{L})=\mathcal{L}$ by $\Phi\circ(-\otimes\mathcal{L})=(-\otimes\mathcal{L})\circ\Phi$. 
By similar arguments as in the proof of \cite[Lemma A.2]{HuySph},
%By the proof of \cite[Lemma A.2]{HuySph}, 
$\Phi\circ[-i]$ is in $\Aut_t(X)$, hence 
$\Phi\in\langle \Aut_t(X), [1]\rangle\simeq\Aut_t(X)\times \ZZ[1]$. 
%From now on, we have the assumption: \dagger
It remains to show that 
$\Aut_t(X)\subset Z(\Aut^{\dagger}(\D^b(X)))$. 
Fix any $\Psi\in\Aut^{\dagger}(\D^b(X))$ and set
\[
\Psi_t:=\Psi\circ f_t^*\circ\Psi^{-1}\circ(f_t^*)^{-1}
\in\Aut^{\dagger}(\D^b(X)).
\]
Since the transcendental part of the Hodge structure $\widetilde{H}(X,\ZZ)$ is equal to $T(X)$, 
$\NN(X)\oplus T(X)\subset\widetilde{H}(X,\ZZ)$ is of finite index, 
so that $\Psi_t\in\Aut^{\dagger}_0(\D^b(X))$. 
By \cite[Lemma A.3]{HuySph}, 
$f_t^*$ acts on ${\rm Stab}^{\dagger}(X)$ trivially,
hence $\Psi_t$ also does.  
We thus have $\Psi_t={\rm id}_{\D^b(X)}$ since
$\Aut^{\dagger}_0(\D^b(X))$ is isomorphic to the group of deck transformations of the normal cover 
${\rm Stab}^{\dagger}(X)\to\P^+_0(X)$, see Theorem \ref{aut-deck}. 
We therefore have
$f_t^*\in Z(\Aut^{\dagger}(\D^b(X)))$, 
thus $Z(\Aut^{\dagger}(\D^b(X)))=\Aut_t(X)\times\ZZ[1]$. 

By $T_{\mathcal{O}_X},~-\otimes\mathcal{L}\in\Aut^{\dagger}_{{\rm CY}}(\D^b(X))$, 
similar arguments give 
\[
C_{\Aut^{\dagger}(\D^b(X))}(\Aut^{\dagger}_{{\rm CY}}(\D^b(X)))\subset\Aut_t(X)\times \ZZ[1]. 
\]
The other inclusion follows from
\[
\Aut_t(X)\times \ZZ[1]
=Z(\Aut^{\dagger}(\D^b(X)))
\subset C_{\Aut^{\dagger}(\D^b(X))}(\Aut^{\dagger}_{{\rm CY}}(\D^b(X))). 
\]
\item
By (i), we have
\begin{eqnarray*}
Z(\Aut^{\dagger}_{{\rm CY}}(\D^b(X)))
&=&C_{\Aut^{\dagger}(\D^b(X))}(\Aut^{\dagger}_{{\rm CY}}(\D^b(X)))\cap\Aut^{\dagger}_{{\rm CY}}(\D^b(X))\\
&=&(\Aut_t(X)\times \ZZ[1])\cap\Aut^{\dagger}_{{\rm CY}}(\D^b(X)). 
\end{eqnarray*}
Each element of $\Aut_t(X)\times\ZZ[1]$ is of the form $(f_t^*)^l\circ[i]$ for some $l\in\ZZ/m_X$ and $i\in\ZZ$. 
Then $((f_t^*)^l\circ[i])^H|_{H^{2,0}(X)}=(-1)^i(f_t^{*l})^H|_{H^{2,0}(X)}$ is equal to ${\rm id}_{H^{2,0}(X)}$ if and only if 
(a) $i$ is odd and $(f_t^{*l})^H|_{H^{2,0}(X)}=-{\rm id}_{H^{2,0}(X)}$,
or
(b) $i$ is even and $(f_t^{*l})^H|_{H^{2,0}(X)}={\rm id}_{H^{2,0}(X)}$ \emph{i.e.} $l=0$. 
By the faithfulness of the action of $\Aut_t(X)$ on $H^{2,0}(X)$, the case (a) is realized only when $m_X$ is even and $l=\frac{m_X}{2}$. 
%see GoodNotes
We therefore have
\[
(\Aut_t(X)\times \ZZ[1])\cap\Aut^{\dagger}_{{\rm CY}}(\D^b(X))
=
\begin{cases}
\langle ~(f_t^*)^{m_X/2}\circ[1]~\rangle & \mbox{if }m_X\mbox{ is even}\\
\ZZ[2] & \mbox{if }m_X\mbox{ is odd},
\end{cases}
\]
which completes the proof. 
\end{enumerate}
\qed
\end{pf}
%In some context about mirror symmetry and Bridgeland conjecture, 
When 
%the $\CC$-quotient of the connected component of reduced stability conditions on $\D^b(X)$ is simply connected (\cite[Conjecture 1.2]{BriK3})
Conjecture \ref{Bri-conjecture} (i) and (ii) are true, 
the group $\Aut^{\dagger}_{{\rm CY}}(\D^b(X))/\ZZ[2]$ is naturally isomorphic to the orbifold fundamental group of the stringy K\"ahler moduli space of $X$ (\cite[Section 7]{BB} and \cite[Conjecture 3.14]{HuyK3}), 
so this quotient group is important in the context of mirror symmetry. 
\begin{cor}\label{orb-fund}
Let $X$ be a K3 surface. 
Then we have 
\begin{eqnarray*}
Z(\Aut^{\dagger}_{{\rm CY}}(\D^b(X))/\ZZ[2])
&\simeq&Z(\Aut^{\dagger}_{{\rm CY}}(\D^b(X)))/\ZZ[2]\\
&\simeq&
\begin{cases}
\ZZ/2 & \mbox{if }m_X\mbox{ is even}\\
trivial & \mbox{if }m_X\mbox{ is odd}. 
\end{cases}
\end{eqnarray*}
\end{cor}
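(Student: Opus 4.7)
The plan is to deduce the corollary from Theorem~\ref{main}~(ii) in two steps. Writing $G := \Aut^{\dagger}_{{\rm CY}}(\D^b(X))$, I first compute $Z(G)/\ZZ[2]$ explicitly, and then identify it with $Z(G/\ZZ[2])$ via the natural injection $\iota\colon Z(G)/\ZZ[2] \hookrightarrow Z(G/\ZZ[2])$ induced by $G \twoheadrightarrow G/\ZZ[2]$ (well-defined because $\ZZ[2]\subset Z(G)$, and injective because $Z(G)\cap\ZZ[2] = \ZZ[2]$).

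For the explicit quotient, if $m_X$ is odd then Theorem~\ref{main}~(ii) gives $Z(G) = \ZZ[2]$ and $Z(G)/\ZZ[2]$ is trivial. If $m_X$ is even, $Z(G) = \langle\zeta\rangle$ with $\zeta := (f_t^*)^{m_X/2}\circ[1]$; since $(f_t^*)^{m_X} = \mathrm{id}_{\D^b(X)}$, we have $\zeta^2 = [2]$, so $\ZZ[2]$ is the unique index-two subgroup of $Z(G)\simeq\ZZ$, giving $Z(G)/\ZZ[2] \simeq \ZZ/2$. This already establishes the second displayed isomorphism.

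To prove $\iota$ is surjective, let $\bar\Phi \in Z(G/\ZZ[2])$ with a lift $\Phi\in G$, so $[\Phi,\Psi]\in\ZZ[2]$ for every $\Psi\in G$. The key step is that commutativity modulo $\ZZ[2]$ with a spherical twist $T_E\in G$ upgrades to exact commutativity: writing $\Phi T_E\Phi^{-1} = T_E\circ[2n]$ for some $n\in\ZZ$, the left-hand side equals $T_{\Phi(E)}$ by Lemma~\ref{sph-elementary}~(ii), so $T_{\Phi(E)} = T_E\circ[2n]$. Evaluating both sides at $F := \Phi(E)$ yields $T_E(F) = F[-1-2n]$; since $\Hom(F,F[-1-2n]) = 0$ for the $2$-spherical $F$ (as $-1-2n$ is odd), the defining triangle of $T_E(F)$ splits, giving $\RHom(E,F)\otimes E \simeq F\oplus F[-2-2n]$. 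Krull--Schmidt in $\D^b(X)$ then forces $F\simeq E[m]$, and a direct matching of shifts gives $n = 0$; by Theorem~\ref{commute-sph}, $\Phi(E) = E[m]$ for some $m\in\ZZ$.

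Applied with $E = \mathcal{O}_X$, this yields $\Phi(\mathcal{O}_X) = \mathcal{O}_X[l]$, and for each line bundle $\mathcal{L}$ the relation $[\Phi,-\otimes\mathcal{L}]\in\ZZ[2]$ gives $\Phi(\mathcal{L}) = \mathcal{L}[l+2n(\mathcal{L})]$ with $n\colon\Pic(X)\to\ZZ$ a group homomorphism. The main remaining obstacle is to prove $n\equiv 0$. For any very ample $\mathcal{L}$, the autoequivalence $\Phi\circ[-l]$ sends $\mathcal{O}_X$ to itself and $\mathcal{L}$ to $\mathcal{L}[2n(\mathcal{L})]$, so $H^0(X,\mathcal{L}) = \Hom(\mathcal{O}_X,\mathcal{L}) \simeq \Hom(\mathcal{O}_X,\mathcal{L}[2n(\mathcal{L})]) = H^{2n(\mathcal{L})}(X,\mathcal{L})$; Kodaira vanishing (using $\omega_X = \mathcal{O}_X$) then forces $n(\mathcal{L}) = 0$, and since ample classes generate $\mathrm{NS}(X)$ as an abelian group we obtain $n\equiv 0$. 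Hence $\Phi(\mathcal{L}) = \mathcal{L}[l]$ for every $\mathcal{L}\in\Pic(X)$, and the Huybrechts-type argument cited in the proof of Theorem~\ref{main}~(i) places $\Phi\circ[-l]$ in $\Aut_t(X)$, so $\Phi \in (\Aut_t(X)\times\ZZ[1])\cap G = Z(G)$, completing the surjectivity of $\iota$.
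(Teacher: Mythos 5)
Your proof is correct, and the first half (computing $Z(G)/\ZZ[2]$ from Theorem~\ref{main}~(ii) and setting up the injection $\iota$) matches the paper. For the surjectivity of $\iota$, however, you take a genuinely different route. The paper's argument is short and uniform: if $\bar\Phi$ is central in $G/\ZZ[2]$, then $\Psi\circ\Phi\circ\Psi^{-1}=\Phi\circ[2l]$ for every $\Psi\in G$, and applying the categorical entropy $h_t$ --- which is conjugation-invariant and satisfies $h_t(\Phi[2l])=h_t(\Phi)+2lt$ for all $t$ --- forces $l=0$ at once, so $\Phi\in Z(G)$ with no further work. You instead avoid entropy entirely and argue through specific elements of $G$: you upgrade commutation modulo $\ZZ[2]$ with $T_{\mathcal{O}_X}$ to exact commutation by evaluating $T_{\Phi(E)}=T_E\circ[2n]$ at $\Phi(E)$, splitting the twist triangle via $\Hom(F,F[\mathrm{odd}])=0$ for a $2$-spherical $F$, and invoking Krull--Schmidt in $\D^b(X)$; you then kill the shift homomorphism $n\colon\Pic(X)\to\ZZ$ with Kodaira vanishing on very ample bundles, and rerun the argument of Theorem~\ref{main}~(i) to land $\Phi$ in $(\Aut_t(X)\times\ZZ[1])\cap G=Z(G)$. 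All of these steps check out (in particular $F\simeq F[-1-2n]$ with $F=E[m]$ does force $n=0$, and very ample classes do generate $\Pic(X)$). What the paper's entropy argument buys is brevity and the fact that it handles all $\Psi\in G$ simultaneously without needing any structural description of $\Phi$; what your argument buys is independence from the entropy machinery of \cite{DHKK,KST}, at the cost of re-deriving the structure of $\Phi$ essentially from scratch and relying on Krull--Schmidt and Kodaira vanishing, which are specific to the geometric K3 setting.
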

\begin{pf}
Let $\varphi: Z(\Aut^{\dagger}_{{\rm CY}}(\D^b(X)))\to Z(\Aut^{\dagger}_{{\rm CY}}(\D^b(X))/\ZZ[2])$ be a natural group homomorphism induced by the quotient. 
By Theorem \ref{main}(ii), 
${\rm ker}~\varphi=\ZZ[2]$ whether $m_X$ is even or not. 
We note that $\overline{\Phi}\in Z(\Aut^{\dagger}_{{\rm CY}}(\D^b(X))/\ZZ[2])$ if and only if 
any $\Psi\in \Aut^{\dagger}_{{\rm CY}}(\D^b(X))$ satisfies 
$\Psi\circ\Phi\circ\Psi^{-1}\circ\Phi^{-1}=[2l]$ for some $l\in\ZZ$. 

Let 
\[
h_{-}(-): \RR\times\Aut(\D^b(X))\to\RR;~
(t,\Psi')\mapsto h_t(\Psi')
\]
be the categorical entropy (\cite[Definition 2.5]{DHKK}). 
Then by \cite[Lemma 2.7(v) and Corollary 2.10]{KST}, we have 
\begin{eqnarray*}
h_t(\Psi\circ\Phi\circ\Psi^{-1})
&=&h_t(\Phi)\\
h_t(\Phi[2l])
&=&h_t(\Phi)+2lt
\end{eqnarray*}
for all $t\in\RR$, which concludes that $l=0$. 
The morphism $\varphi$ is therefore surjective, 
which completes the proof. 
\qed
\end{pf}
%---------------------------------------------------------
\subsection{Examples}\label{subsec-eg}
We here collect several examples of the order $m_X$ of the finite cyclic group $\Aut_t(X)$, which determines the center groups by Theorem \ref{main} and Corollary \ref{orb-fund}. 
%We fix a generator $f_t\in\Aut_t(X)$ as in Theorem \ref{main}
\begin{eg}\label{odd-Pic}
Let $X$ be a K3 surface of odd Picard rank. 
\begin{comment}
Then we have the following:
\begin{enumerate}
\item
If $\Aut_t(X)=\mathrm{O}(T(X))$, then
\begin{eqnarray*}
Z(\Aut(\D^b(X)))
&=&\langle f_t^*\rangle\times\ZZ[1]\simeq(\ZZ/2\ZZ)\times\ZZ\\
Z(\Aut_{{\rm CY}}(\D^b(X)))
&=&\langle f_t^*\circ[1]\rangle\simeq\ZZ
\end{eqnarray*}
\item
If $\Aut_t(X)\neq \mathrm{O}(T(X))$, then
\begin{eqnarray*}
Z(\Aut(\D^b(X)))&=&\ZZ[1]\simeq\ZZ\\
Z(\Aut_{{\rm CY}}(\D^b(X)))&=&\ZZ[2]\simeq\ZZ. 
\end{eqnarray*}
\end{enumerate}
\end{comment}
Then $\mathrm{O}(T(X))$ is isomorphic to $\ZZ/2$ (cf. \cite[Cor {\bf 3}.3.5]{HuyK3}). 
Therefore $m_X=2$ (resp. $m_X=1$) if and only if $\Aut_t(X)=\mathrm{O}(T(X))$ 
(resp. $\Aut_t(X)=\{{\rm id}_X\}$). 
%(resp. $\Aut_t(X)\neq\mathrm{O}(T(X))$). 
\end{eg}
\begin{eg}
Let $X$ be a K3 surface of Picard rank 1, and $H$ the ample generator of $\mathrm{NS}(X)$. 
As a special case of Example \ref{odd-Pic}, we have the following: 
\begin{enumerate}
\item
If $H^2=2$, then $\Aut(X)=\Aut_t(X)=\langle i^*\rangle=\mathrm{O}(T(X))$, 
where $i\in\Aut_t(X)$ is the covering involution of the double cover $X\to\PP^2$ branched along a smooth curve of degree six. 
%cf. \cite{HuyK3}
\begin{comment}
Therefore, 
\begin{eqnarray*}
Z(\Aut(\D^b(X)))
&=&\langle i^*\rangle\times\ZZ[1]\simeq(\ZZ/2\ZZ)\times\ZZ\\
Z(\Aut_{{\rm CY}}(\D^b(X)))
&=&\langle i^*\circ [1]\rangle\simeq\ZZ.
\end{eqnarray*}
\end{comment}
\item
If $H^2>2$, then $\Aut(X)=\Aut_t(X)=\{\mathrm{id}_X\}\neq \mathrm{O}(T(X))$. 
\end{enumerate}
Moreover as mentioned in Theorem \ref{BB-thm}, 
we have $\Aut^{\dagger}(\D^b(X))=\Aut(\D^b(X))$. 
\end{eg}
\begin{eg}
Let $X$ be a K3 surface of Picard rank 2, with infinite automorphism group. 
Then by \cite[Corollary 1]{GLP}, $\Aut(X)$ is isomorphic to $\ZZ$ or $\ZZ/2*\ZZ/2$. 
Since $\Aut_t(X)$ is finite and $\Aut_t(X)\subset Z(\Aut(X))$, 
one has $m_X=1$. % i.e. $\Aut_t(X)=\{\rm{id_X}\}$
\end{eg}
\begin{eg}
Let $X_3$ and $X_4$ be the K3 surfaces of Picard rank 20 whose transcendental lattices are of the form 
\[
T(X_3)
=
\begin{pmatrix}
2 & 1 \\
1 & 2 \\
\end{pmatrix}
\text{  and  }
T(X_4)
=
\begin{pmatrix}
2 & 0 \\
0 & 2 \\
\end{pmatrix}
\]
respectively, see \cite{SI} and \cite[Corollary {\bf 14}.3.21]{HuyK3}. 
By Vinberg (\cite[Theorem in 2.4 and Theorem in 3.3]{Vin}), 
one has $\Aut_t(X)\simeq U_0$ in his notation, hence $m_{X_3}=3$ and $m_{X_4}=2$. 
\end{eg}
\begin{eg}
Let $X$ be a K3 surface with an unimodular or 2-elementary transcendental lattice. 
Then there exists an involution $\sigma\in\Aut(X)$ satisfying 
$H^2(\sigma)|_{{\rm NS}(X)}={\rm id}_{{\rm NS}(X)}$ and $H^2(\sigma)|_{T(X)}=-{\rm id}_{T(X)}$
, hence $m_X$ is even. 
\end{eg}
\begin{eg}
Let $X$ be a K3 surface with $\varphi(m_X)={\rm rk}~T(X)$, 
where $\varphi$ is the Euler function. 
Then $m_X$ is in the set
\[
\{12,28,36,42,44,66,3^k(1\le k\le3),5^l(l=1,2),7,11,13,17,19\}
\]
%completely classified, 
and $X$ is uniquely determined by $m_X$
due to Kondo (\cite[Main Theorem]{Kon}), Vorontsov (\cite{Vor}), Machida--Oguiso (\cite[Theorem 3]{MO}) and Oguiso--Zhang (\cite[Theorem 2]{OZ}). 
%Kond\={o}
Especially, $m_X$ is even (resp. odd) if and only if $T(X)$ is unimodular (resp. non-unimodular). 
\end{eg}

%%%%%%%%%%%%%%%%%%%%%%%%%%%%%%%%%%%%%%%%%%%%%%%%%%%%%%%%%%
\appendix

\section{Fully faithful functors and the autoequivalence group for graded dual numbers}\label{appendix}

Using Proposition \ref{class-fin-dim-mod}, we can describe the autoequivalence groups $\Aut(\D(A_d))$ and $\Aut(\D(A_d)^c)$ for the dg-algebra of the graded dual numbers $A_d$.

Recall that, for a morphism of dg-algebras $\phi : B \rightarrow B'$, the functor $\mathrm{Ind}_{\phi} \colon \D(B) \rightarrow \D(B')$ is defined as tensor product with $B$ seen as an $B\text{-}B'$ bimodule. 
For $\lambda \in \CC^{\times}$ let $\phi_{\lambda} : A_d \rightarrow A_d$ be the morphism of dg-algebras defined by sending $\varepsilon$ to $\lambda \varepsilon$.

The following theorem is a generalisation of \cite[Corollay 5.2, 5.11]{AM} to the graded setting.

\begin{thm}
  Every fully faithful endofunctor of $\D(A_d)^c$ is an autoequivalence,
  and for every $\Phi \in \Aut(\D(A_d))$
  there exists $m \in \ZZ$ and $\lambda \in \CC^{\times}$ such that $\Phi \simeq \mathrm{Ind}_{\phi_{\lambda}} \circ [m]$.
\end{thm}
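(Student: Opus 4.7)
The plan is to analyze both assertions by tracking the action on the compact generator $A_d \in \D(A_d)^c$. Consider first a fully faithful $F \colon \D(A_d)^c \to \D(A_d)^c$. Full-faithfulness yields an isomorphism $\Hom^{\bullet}(F(A_d), F(A_d)) \simeq A_d$ of graded $K$-algebras, in particular of total dimension $2$ over $K$. By Proposition \ref{class-fin-dim-mod}, $F(A_d) \simeq \bigoplus_j B_{n_j}[s_j]$ with each $n_j \geq 1$, and each direct summand contributes at least the identity to $\Hom^0$. A direct computation using \eqref{eqn:extension-A-d-finite} (or the Koszul dual description $B_n \leftrightarrow K[q]/q^n$) shows $\dim_K \Hom^{\bullet}(B_n,B_n) = 2n$ for $n \geq 1$, so dimension counting forces a single summand $F(A_d) \simeq B_1[m] = A_d[m]$ for some $m \in \ZZ$. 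After composing with $[-m]$ I may assume $F(A_d) = A_d$.

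Under this normalization, $F$ induces a graded $K$-algebra endomorphism of $A_d \simeq \Hom^{\bullet}(A_d, A_d)$ sending $1 \mapsto 1$ and $\epsilon \mapsto \lambda \epsilon$ for some $\lambda \in K^{\times}$ (nonzero by faithfulness). Thus $F$ agrees with $\mathrm{Ind}_{\phi_{\lambda}}$ on the generator $A_d$ together with its endomorphism algebra. To upgrade this to a natural isomorphism $F \simeq \mathrm{Ind}_{\phi_{\lambda}}$ on $\D(A_d)^c$, I would set $G := \mathrm{Ind}_{\phi_{\lambda^{-1}}} \circ F$, an exact functor with $G(A_d) \simeq A_d$ inducing the identity on $\Hom^{\bullet}(A_d, A_d)$. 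It then remains to prove $G \simeq \mathrm{id}$. This is a recognition statement: passing to dg-enhancements as in \cite[\S 2.1]{AL}, an exact endofunctor of $\D(A_d)^c = \mathrm{thick}(A_d)$ is determined up to isomorphism by its value on $A_d$ together with the induced dg-algebra endomorphism of $\Hom^{\bullet}(A_d, A_d)$. Alternatively, Proposition \ref{class-fin-dim-mod} permits a direct verification of $G \simeq \mathrm{id}$ on each indecomposable $B_n$, since $B_n$ is the convolution of an explicit $\epsilon$-twisted complex on which $G$ acts trivially.

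For the first assertion, essential surjectivity is then automatic: the essential image of $F$ is a thick subcategory of $\D(A_d)^c$ containing $A_d[m]$, hence the classical generator $A_d$, hence all of $\D(A_d)^c$. For the second assertion, any $\Phi \in \Aut(\D(A_d))$ preserves compact objects and restricts to an autoequivalence of $\D(A_d)^c$, to which the above analysis applies, yielding $\Phi|_{\D(A_d)^c} \simeq \mathrm{Ind}_{\phi_{\lambda}} \circ [m]$. Both functors commute with arbitrary coproducts, so this isomorphism on a compact generator extends uniquely to the whole of $\D(A_d)$.

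The main obstacle I anticipate is the recognition step that upgrades matching on the generator $A_d$ to a natural isomorphism of functors. Heuristically an exact endofunctor of $\D(A_d)^c$ corresponds to a dg-bimodule over $A_d$, and its induced algebra map recovers the bimodule up to quasi-isomorphism; executing this cleanly requires either a Toen-type bimodule representability argument in the dg-enhanced setting, or a bare-hands verification exploiting the explicit structure of the $B_n$'s furnished by Proposition \ref{class-fin-dim-mod}.
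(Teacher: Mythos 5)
Your proposal is correct and follows essentially the same route as the paper: reduce to the compact generator, use Proposition \ref{class-fin-dim-mod} to force $F(A_d)\simeq B_1[m]=A_d[m]$, identify the induced graded algebra map as some $\phi_\lambda$, and then argue that a functor fixing $A_d$ together with its endomorphism algebra is isomorphic to the identity. The one place you are more explicit than the paper --- the ``recognition step'' upgrading agreement on the generator to a natural isomorphism of functors, and the extension from $\D(A_d)^c$ to $\D(A_d)$ via cocontinuity --- is exactly where the paper is terse (it simply asserts the conclusion ``given Proposition \ref{class-fin-dim-mod}''), and your proposed verification on the indecomposables $B_n$ is a reasonable way to fill that in.
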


\begin{proof}
  If $\Phi \colon \D(A_d)^c \rightarrow \D(A_d)^c$ is fully faithful, then it must induce an isomorphism of graded algebras $\Phi \colon A_d \rightarrow \Hom^{\bullet}_{A_d}(\Phi(A_d), \Phi(A_d)) = \oplus_{n \in \mathbb{Z}} \Hom_{\D(A_d)}(\Phi(A_d), \Phi(A_d)[n])[-n]$.
  As $\Phi(A_d)$ is compact, by Proposition \ref{class-fin-dim-mod} we know that it decomposes as a direct sum of shifts of $B_n$'s for $n > 0$.
  It is easy to check that such a direct sum has endomorphism algebra equal to $A_d$ if and only if there is just one summand and it is $B_1[m]$ for some $m \in \ZZ$.
  Hence, $\Phi(A_d) \simeq A_d[m]$.
  Now notice that the only maps of graded algebras $A_d \rightarrow A_d$ (which coincide with the only dg-algebra maps from $A_d$ to itself) are the $\phi_{\lambda}$'s.
  Hence, $\Phi \circ [-m] \circ \mathrm{Ind}_{\phi_{\lambda}}^{-1}$ is an endofunctor of $\D(A_d)^c$ that acts as the identity of $A_d$.
  Given Proposition \ref{class-fin-dim-mod}, this implies that $\Phi \circ [-m] \circ \mathrm{Ind}_{\phi_{\lambda}}^{-1} \simeq \mathrm{id}$, and therefore $\Phi \simeq \mathrm{Ind}_{\phi_{\lambda}} \circ [m]$, which is an autoequivalence, and the first claim follows.

  For the second claim notice that if $\Phi \in \Aut(\D(A_d))$ then it induces an autoequivalence of $\D(A_d)^c$, and we can repeat the argument above to conclude that $\Phi \simeq \mathrm{Ind}_{\phi_{\lambda}} \circ [m]$ for some $m \in \mathbb{Z}$.
\end{proof}

%%%%%%%%%%%%%%%%%%%%%%%%%%%%%%%%%%%%%%%%%%%%%%%%%%%%%%%%%%

\bibliography{math}
\bibliographystyle{alpha}

%\begin{thebibliography}{99}
%\end{thebibliography}

\end{document}